\def\theequation{\@arabic\c@equation}
\newcommand{\gaD}{\gamma_{{}_D}}
\newcommand{\gaN}{\gamma_{{}_N}}
\newcommand{\tN}{\tau_{{}_N}}
\newcommand{\Om}{\Omega}
\newcommand{\dOm}{{\partial\Omega}}
\newcommand{\lnth}{_{N^{3/2}(\partial\Omega)}\langle}
\newcommand{\rnths}{\rangle_{(N^{3/2}(\partial\Omega))^*}}
\newcommand{\e}{\hbox{\rm e}}
\newcommand{\bbM}{{\mathbb{M}}}
\newcommand{\bbN}{{\mathbb{N}}}
\newcommand{\R}{{\mathbb{R}}}
\newcommand{\bbZ}{{\mathbb{Z}}}
\newcommand{\Z}{{\mathbb{Z}}}
\newcommand{\C}{{\mathbb{C}}}
\newcommand{\bbC}{{\mathbb{C}}}
\newcommand{\cB}{{\mathcal B}}
\newcommand{\cC}{{\mathcal C}}
\newcommand{\cD}{{\mathcal D}}
\newcommand{\cG}{{\mathcal G}}
\newcommand{\cH}{{\mathcal H}}
\newcommand{\cJ}{{\mathcal J}}
\newcommand{\cK}{{\mathcal K}}
\newcommand{\cM}{{\mathcal M}}
\newcommand{\cN}{{\mathcal N}}
\newcommand{\cQ}{{\mathcal Q}}
\newcommand{\cU}{{\mathcal U}}
\newcommand{\cX}{{\mathcal X}}
\newcommand{\cY}{{\mathcal Y}}
\newcommand{\bfi}{{\bf i}}
\newcommand{\no}{\nonumber}
\newcommand{\lb}{\label}
\newcommand{\ol}{\overline}
\newcommand{\ran}{\text{\rm{ran}}}
\newcommand{\bi}{\bibitem}
\newcommand{\hatt}{\widehat}
\newcommand{\dist}{\operatorname{dist}}
\newcommand{\mi}{\operatorname{Mas}}
\newcommand{\mo}{\operatorname{Mor}}
\newcommand{\noh}{N^{1/2}(\partial\Omega)}
\newcommand{\nohs}{\big(N^{1/2}(\partial\Omega)\big)^*}
\newcommand{\rnohs}{\rangle_{(N^{1/2}(\partial\Omega))^*}}
\newcommand{\lnoh}{_{N^{1/2}(\partial\Omega)}\langle}
\newcommand{\gd}{\widehat{\gamma}_D}
\newcommand{\gn}{\widehat{\gamma}_N}
\newcommand{\Ltwo}{L^2(Q, d^nx)}
\newcommand{\Ltwom}{L^2(Q;\C^m)}
\newcommand{\Htworm}{H^2(Q;\R^{2m})}
\newcommand{\Htwom}{H^2(Q;\C^m)}
\newcommand{\Ltworm}{L^2(Q;\R^{2m})}
\newcommand{\Ltwor}{L^2(Q;\mathbb{R})}
\newcommand{\Ltwotm}{L^2(tQ;\mathbb{C}^m)}
\newcommand{\Htwotm}{H^2(tQ;\mathbb{C}^m)}
\newcommand{\Htwortm}{H^2(tQ;\mathbb{R}^{2m})}
\newcommand{\Ltwortm}{L^2(tQ;\mathbb{R}^{2m})}
\newcommand{\Htwor}{H^2(Q;\mathbb{R})}
\newcommand{\Htworo}{H^2_{0}(Q;\mathbb{R})}
\numberwithin{equation}{section}
\renewcommand{\det}{\operatorname{det}}
\newcommand{\dom}{\operatorname{dom}}
\newcommand{\codim}{\operatorname{codim}}
\newcommand{\tr}{\operatorname{tr}}
\newcommand{\sign}{\operatorname{sign}}
\newcommand{\spec}{\operatorname{Spec}}
\renewcommand{\Re}{\operatorname{Re }}
\renewcommand\Im{\operatorname{Im}}
\renewcommand{\ker}{\operatorname{ker}}
\theoremstyle{plain}
\newtheorem{theorem}{Theorem}[section]
\newtheorem{hypothesis}[theorem]{Hypothesis}
\newtheorem{lemma}[theorem]{Lemma}
\newtheorem{corollary}[theorem]{Corollary}
\newtheorem{proposition}[theorem]{Proposition}
\theoremstyle{definition}
\newtheorem{definition}[theorem]{Definition}
\newtheorem{remark}[theorem]{Remark}
\begin{document}

\begin{abstract}
We study the spectrum of Schr\"odinger operators with matrix valued potentials utilizing tools from infinite dimensional symplectic geometry. Using the spaces of abstract boundary values, we derive relations between the Morse and Maslov indices for a family of operators on a Hilbert space obtained by perturbing a given self-adjoint operator by a smooth family of bounded self-adjoint operators. The abstract results are applied to the Schr\"{o}dinger operators with quasi-periodic, Dirichlet and Neumann boundary conditions. In particular, we derive an analogue of the Morse-Smale Index Theorem for the multidimensional 
Schr\"odinger operators with periodic potentials. For quasi convex domains in $\R^n$ we recast the results connecting the Morse and Maslov indices using the Dirichlet and Neumann traces on the boundary of the domain.
\end{abstract}

\allowdisplaybreaks

\title[Periodic Schr\"odinger Operators]{The Morse and Maslov Indices for Schr\"odinger Operators}

\author[Y. Latushkin]{Yuri Latushkin}
\address{Department of Mathematics,
The University of Missouri, Columbia, MO 65211, USA}
\email{latushkiny@missouri.edu}
\author[A. Sukhtayev]{Alim Sukhtayev}
\address{Department of Mathematics, Texas A\&M University \\
College Station, TX 77843-3368, USA}
\email{alim@math.tamu.edu}
\author[S. Sukhtaiev]{Selim Sukhtaiev	}
\address{Department of Mathematics,
The University of Missouri, Columbia, MO 65211, USA}
\email{sswfd@mail.missouri.edu}
\date{\today}
\keywords{Schr\"odinger equation, Hamiltonian systems, periodic potentials,  stability, differential operators, discrete spectrum, Fredholm Lagrangian Grassmanian, symplectic}
\thanks{Partially supported by the NSF grant  DMS-1067929, and by the Research Council and the Research
Board of the University of Missouri. Yuri Latushkin sincerely thanks Lai-Sang Young for the opportunity to spend his sabbatical at the Courant Institute where this paper was completed. The authors thank Margaret Beck, Graham Cox and Chris  Jones for many stimulating discussions.}
\maketitle

\section{Introduction}

The classical Morse Index Theorem relates the Morse index (the number of negative eigenvalues counting the multiplicities)  of the
one-dimensional Schr\"odinger operator $L=-\partial^2_{D,[0,1]}+V$ in $L^2([0,1])$ with a real valued potential $V$ and the Dirichlet boundary conditions, and the total dimension of the null spaces of the restrictions $L_t=-\partial^2_{D,[0,t]}+V$ of $L$ in $L^2([0,t])$,
 see, e.g., \cite[Section 15]{M63}:
\begin{equation}\label{MT}
\mo(L)=\sum_{t\in[0,1)}\dim(\ker(L_t)).
\end{equation}
This foundational result was developed and generalized in many important directions, in particular, for the case of periodic boundary conditions \cite{B56,CZ84,CD77,D76}. 
A point $t\in[0,1]$ is called conjugate, or a crossing, if the
null space of the restriction $L_t=-\partial^2_{D,[0,t]}+V$ of $L$ in $L^2([0,t])$ is nontrivial. The right-hand side of \eqref{MT} therefore can be viewed as the Maslov index (the number of the conjugate points
counting the multiplicities) of a certain path in the space of Lagrangian planes (among many important contributions on the Arnold-Keller-Malsov index  we mention \cite{A01,arnold67,Arn85,CLM,G,F04,rs93,RoSa95}).

The celebrated  multidimensional Morse-Smale Index Theorem \cite{S65} gives  relation \eqref{MT} between the Morse and Maslov indices for elliptic differential operators $L$ on a multidimensional manifold with the Dirichlet boundary conditions and their restrictions $L_t$ on a family of submanifolds. This result is related to many  important  advances in the study of the Maslov index, and we refer to \cite{BW93,DJ11,Sw,SW,U73} and the bibliography therein. While the Dirichlet and some other separated boundary conditions have been studied in detail, the multidimensional periodic case is not well understood. Although there is a tremendous literature on the spectrum of the periodic Schr\"odinger operator (see, e.g., the bibliography in \cite{Kar}), so far a link to its topologic description 
via the Maslov index appears to be missing, and is supplied in this paper.

The main objective of the current paper is to prove a version of the Morse-Smale Index Theorem for the multidimensional differential operators with periodic and quasi-periodic boundary conditions.
Recently, relations between the Morse and Maslov indices have been extensively studied in \cite{DJ11} and \cite{JLM,CJLS,CJM}, see also 
\cite{CB,CDB06,CDB09,CDB11}. 
A commonly used method for elliptic operators on multidimensional domains $\Omega\subset\R^n$ is to construct a differentiable loop of Lagrangian planes in the symplectic Hilbert space $H^{1/2}(\partial \Omega)\times H^{-1/2}(\partial \Omega)$ and examine the intersections of this loop with a fixed plane, which is determined by the type of boundary conditions under consideration. The path in the space of Lagrangian planes is constructed by taking boundary traces of the weak solutions of the respective homogeneous equations. 

In the current paper we take a different path. We do not use the weak solutions but instead, using a functional analysis approach  to the Maslov index developed in \cite{BbF95}, we consider yet another symplectic Hilbert space, the space of abstract boundary values. This helps us to avoid taking the Dirichlet and Neumann traces of $H^1(\Omega)$ functions, and therefore eleminates many of the technical difficulties encountered in \cite{CJLS,CJM, DJ11}. In particular, we do not need to show that the loop in the space of Lagrangian planes is 
differentiable as this fact comes for free from the abstract theory in \cite{BbF95}.

Let $A$ be a symmetric operator in a Hilbert space $\cH$.
 The quotient space $\dom(A^*)/\dom(A)$ has a natural symplectic structure; it is called in \cite{BbF95} the space of abstract boundary values, and we let $\gamma$ denote the natural projection from $\dom(A^*)$ into this space.
Let $A_\cD$ be a self-adjoint extension of $A$ with the domain $\cD\subset \cH$. We assume that $A_\cD $ has compact resolvent. Let $V_t:[0,1]\rightarrow \cB(\cH)$ be a differentiable family of bounded self-adjoint operators satisfying certain hypotheses listed below in Theorem \ref{mormas} {\it (i)}. The main abstract result of the current  paper is concerned with counting the number of the unstable eigenvalues of $A_\cD+V_t$, that is, the Morse index of this operator, 
$\mo(A_\cD+V_t):=\#\left\{ \lambda_k<0 \big| \lambda_k\in\spec(A_\cD+V_t) \right\}$. We recast the existence of a non-trivial solution of the eigenvalue problem 
\begin{align}
&A^*u+V_tu=\lambda u,\ \ u\in \cD,\lb{evp1}
\end{align}
in terms of the conjugate points (or crossings), i.e., the points $t\in[0,1]$ where an associated path of Lagrangian subspaces in the space of abstract boundary values intersects the train of $\gamma(\cD)$ (the set of the subspaces in $\dom(A^*)/\dom(A)$ with a nontrivial intersection with $\gamma(\cD)$).

Let $\Gamma=\cup_{j=1}^4{\Gamma}_j$ be the boundary of the square $[\lambda_{\infty},0]\times [\tau,1]$ for some (small) $\tau>0$ and $\lambda_{\infty}<0$ (with large $|\lambda_{\infty}|$), parametrized by a segment $\Sigma\subset\R$ as described below in \eqref{par2}-\eqref{par5}, see Figure 1. Denoting $W_s:=V_{t(s)}-\lambda(s)I_{\cH}$ for $s\in\Sigma$, we notice that  problem \eqref{evp1} with $\lambda=\lambda(s)$ and $t=t(s)$ has a nontrivial solution if and only if $\gamma\left(\ker\left(A^*+ W_s\right)\right)\cap\gamma\left(\cD\right)\not=\{0\}$. The points $s\in\Sigma$ or $t=t(s)$ where this happens are called conjugate points; they are shown in Figure 1 as black circles. Each conjugate point is assigned certain signature (or multiplicity)  equals to the signature of a finite dimensional (Maslov) crossing form, see Definition \ref{def21} below. The Maslov index $\mi(\Upsilon(s)\big|_{s\in\Sigma}, \gamma(\cD))$ of the path $\Upsilon:\Sigma\mapsto\gamma(\ker(A^*+W_s))$ is defined as the number of the conjugate points counting their signatures, see definition \eqref{dfnMInd} below. Using homotopy invariance of the Maslov index we conclude that the total number of the conjugate points on $\Gamma$ counting their signatures is equal to zero. Using this and analyzing sign definiteness of the crossing forms, we prove the following general formulas:
\begin{align}\lb{mff1intro}
\mo(A_\cD+V_{\tau})-\mo(A_\cD+V_{1})&=\mi(\gamma(\ker(A^*)+V_t)\big|_{\tau\leq t\leq 1},\gamma(\cD)),\\
\lb{mff2intro}
\mo(A_\cD+V_{\tau})-\mo(A_\cD+V_{1})&=\sum\limits_{\tau< t \leq 1}\dim\big(\ker(A_\cD+V_t)\big),\\ 
\mo(A_\cD+V_{\tau})-\mo(A_\cD+V_{1})&=-\sum\limits_{\tau\leq t < 1}\dim\big(\ker(A_\cD+V_t)\big),\lb{mff3intro}
\end{align}
where formula \eqref{mff2intro}, respectively, \eqref{mff3intro} holds provided $\frac {dV_t}{dt}$ is positive, respectively, negative definite).
\begin{figure}
	\begin{picture}(100,100)(-20,0)
	\put(78,0){0}
	\put(80,8){\vector(0,1){95}}
	\put(5,10){\vector(1,0){95}}
	\put(71,40){\text{\tiny $\Gamma_2$}}
	\put(12,40){\text{\tiny $\Gamma_4$}}
	\put(45,75){\text{\tiny $\Gamma_3$}}
	\put(43,14){\text{\tiny $\Gamma_1$}}
	\put(100,12){$\lambda$}
	\put(85,100){$s$}
	\put(80,20){\line(0,1){60}}
	\put(10,20){\line(0,1){60}}
	\put(80,8){\line(0,1){4}}
	\put(2,2){$\lambda_\infty$}
	\put(10,20){\line(1,0){70}}
	\put(10,80){\line(1,0){70}}
	\put(65,20){\circle*{4}}
	\put(80,50){\circle*{4}}
	\put(80,70){\circle*{4}}
	\put(20,80){\circle*{4}}
	\put(40,80){\circle*{4}}
	\put(60,80){\circle*{4}}
	\put(20,87){{\tiny \text{eigenvalues}}}
	\put(14,24){{\tiny \,\,\,\text{eigenvalues}}}  
	\put(85,23){\rotatebox{90}{{\tiny conjugate points}}}
	\put(82,18){{\tiny $\tau$}}
	\put(82,80){{\tiny $1$}}
	\put(0,25){\rotatebox{90}{{\tiny no intersections}}}
	\end{picture}
	\caption{\ }
\end{figure} 

Our main application of the general formulas \eqref{mff1intro}--\eqref{mff3intro} is a
relation between the Morse and Maslov indices for the
multidimensional Schr\"{o}dinger operator with periodic matrix valued potential and $\vec{\theta}$-periodic boundary conditions (the one dimensional case was treated in \cite{JLM} by a different method). Let $V\in C^1(\R^n,\R^{m\times m} )$ be a periodic function with the basic period cell $$Q:=\{t_1{a}_1+\cdots+t_n{a}_n|\ 0\leq t_j\leq 1 , j\in\{1,\dots, n\}\},$$ and assume that $V(x)=V(x)^{\top},\ x\in Q$. For a given vector $\vec{\theta}\in[0,1)^n$, we consider the Laplace operator $-\Delta_{\vec{\theta}}$ in $L^2(Q, \C^m)$ equipped with the $\vec{\theta}$-periodic boundary conditions 
$$u(x+a_j)=\e^{2\pi\bfi\theta_j}u(x),\ \frac{\partial u}{\partial \vec{\nu}}(x+a_j)=\e^{2\pi\bfi\theta_j}\frac{\partial u}{\partial \vec{\nu}}(x)\text{ for a.\ a.\ $x\in {\partial Q}^0_{j},  j\in\{1,\dots, n\}$,}$$
where ${\partial Q}^0_{j}=\big\{t_1a_1+\dots+t_na_n\in Q\big| t_j=0\big\}$ is the ``left'' $j$-th face of $Q$ (the rigorous definition of
$-\Delta_{\vec{\theta}}$  is given in Theorem \ref{operdef1} below). 
Shrinking $x\mapsto y=tx$ for $x\in Q$  produces the set
$tQ=\{y=tx\big| x\in Q\}$ for each $t\in(0,1]$. Considering the operator $-\Delta_{\vec{\theta},t}+V_{tQ}(y)$ in $L^2(tQ; \C^m)$ with $V_{tQ}=V\big|_{tQ}$ and rescaling it back to
$Q$, we obtain a family of operators  $-\Delta_{\vec{\theta}}+t^2V(tx)$ in $L^2(Q; \C^m)$. Applying the abstract result in \eqref{mff1intro} with $A_\cD=-\Delta_{\vec{\theta}}$ and $V_t=t^2V(tx)$, we arrive at a formula relating the Morse index of the operator $-\Delta_{\vec{\theta}}+V$ and the Maslov index of a flow the set of Lagrangian subspaces 
in the space of abstract boundary values, see Theorems \ref{mormastper} 
and \ref{mormasper}. In addition, we give a fairly explicit formulas for the Maslov crossing forms in term of the potential, see Theorems \ref{3.19teor} and \ref{onedim}.
In turn, the relation between the Morse and Maslov indices implies the following versions of the Morse-Smale Index Theorem:
{\em If $\min\spec\big(2tV(tx)+t^2\nabla V(tx)x\big)>0$ for each $t~\in~(0,1]$ and almost all $x\in Q$ then 
	\begin{align}
	&\mo\big(-\Delta_{\vec{\theta}}+V\big)\no\\	
	&=\begin{cases} 0, &\text{if $\vec{\theta}\not=0$}, \no\\
	\mo\big(V(0)\big)-\sum\limits_{\tau< t \leq 1}\dim\big(\ker(-\Delta_{0,t}+V_{tQ})\big),& \text{if $\vec{\theta}=0$, $V(0)$ is invertible,}
	\end{cases} \no
	\end{align}
if $\min\spec\big(2tV(tx)+t^2\nabla V(tx)x\big)<0$ then
	\begin{align}
	&\mo\big(-\Delta_{\vec{\theta}}+V\big)\no\\	
	&=\begin{cases} \sum\limits_{\tau\leq t < 1}\dim\big(\ker(-\Delta_{\vec{\theta},t}+V_{tQ})\big), &\text{if $\vec{\theta}\not=0$}, \no\\
	\mo\big(V(0)\big)+\sum\limits_{\tau\leq t < 1}\dim\big(\ker(-\Delta_{0,t}+V_{tQ})\big), & \text{if $\vec{\theta}=0$, $V(0)$ is invertible.}
	\end{cases} \no
	\end{align}
	}
We also apply the abstract formulas  \eqref{mff1intro}--\eqref{mff3intro} to derive analogous results for the Schr\"{o}dinger operator in $L^2(\Omega)$ for a domain $\Omega\subset\R^n$ with Dirichlet and Neumann boundary conditions. This derivation is substantially easier than in \cite{DJ11} or \cite{CJLS}, although we were not able to achieve the generality of \cite{CJLS} since for the more general Neumann type boundary conditions considered in \cite{CJLS} one needs the spaces of abstract boundary values
with $t$-dependent $\dom(A)$, cf.\ \cite{SW}. 

Up to this point in our discussion the Morse index of the differential operators on $\Omega\subset\R^n$ was computed via the Maslov index of a path of Lagrangian subspaces in the symplectic space of abstract boundary values. 
The next natural question is how to transfer the information from the space of abstract boundary values into the boundary space of functions (or distributions) on the actual boundary $\partial\Omega$ of the domain.
In the current paper we answer this question for a quite general class of quasi-convex domains. We choose
$(N^{1/2}(\dOm))^*\times N^{1/2}(\dOm)$ as the boundary space; here
$N^{1/2}(\dOm)$ is defined in \eqref{dfnN12} and satisfies $N^{1/2}(\dOm)=H^{1/2}(\dOm)$ when $\dOm$ is sufficiently smooth. We construct a symplectomorphism between the space of abstract boundary values and the boundary space. Using the abstract formulas \eqref{mff1intro}--\eqref{mff3intro} we characterize the Morse index of the Shr\"odinger operator via the Maslov index of a path of Lagrangian planes in the boundary space $(N^{1/2}(\dOm))^*\times N^{1/2}(\dOm)$. Although in spirit this result is close to the similar formulas from \cite{CJLS}, it is not the same as in the current paper we use different from \cite{CJLS} trace operators.   

{\bf Notations.} We denote by $I_n$ and 
$0_n$ the $n\times n$ identity and zero matrix.  We let $\top$ denote transposition. For an $n\times m$ matrix $A=(a_{ij})_{i=1,j=1}^{n,m}$
and a $k\times\ell$ matrix $B=(b_{ij})_{i=1,j=1}^{k,\ell}$, we denote by
$A\otimes B$ the Kronecker product, that is, the $nk\times m\ell$ matrix composed of $k\times\ell$ blocks $a_{ij}B$, $i=1,\dots n$, $j=1,\dots m$. We let $\cJ=I_m\otimes\left[\begin{smallmatrix}0&-1\\1&0\end{smallmatrix}\right]$.
For a given function $V: \R^n\rightarrow \R^{m\times m}$  we denote $$V_t(x):=t^2V(tx),\, V_{\R}(x):=V(x)\otimes I_2,\, V_{t,\R}(x):=t^2V(tx)\otimes I_2, \, x\in\R^n,\, t\in\R.$$ We let $(\cdot\,,\cdot)_{\cX}$ denote the scalar product in a Hilbert space $\cX$. The Banach spaces of bounded and compact linear operators on $\cX$ are
denoted by $\cB(\cX)$ and $\cB_\infty(\cX)$, respectively. The spectrum of an operator $A$ is denoted by $\spec(A)$. In addition, $\#(M)$ abbreviates the cardinality of the set $M$. 
 We denote by $\Ltwor$ the (real) space of the real-valued functions  and by $\Ltwo=L^2(Q;\C)$ the (complex) space of the complex-valued $L^2$-functions. Similarly, we use notations $\Htwor$ and $\Htworo$ for the spaces of the real-valued functions from the corresponding Sobolev spaces, while $H^2(Q;\R^m)$ and $H^2_0(Q;\R^m)$ denote the respective spaces of the vector valued functions.
\section{Abstract results}\label{sec:2}
\subsection{Definition of the Maslov index.}
In this section we derive our main abstract result relating the Morse and Maslov indices of a family of self-adjoint operators. To begin, we recall basic definitions, see \cite{BbF95, CJLS,F04} for more details. Let $\cH $ be a real Hilbert space with the inner product $\left( \cdot, \cdot \right) _{\cH}$ and $\omega_{\cH}:\cH\times\cH\rightarrow \R$ be a bilinear, skew-symmetric, bounded, non-degenerate form. There exists an operator $J\in \cB(\cH),\ J^2=-I_{\cH},\ J^{t}=-J$, such that $\omega_{\cH}(u,v)=(Ju,v)_{\cH}$ for all $u,v\in\cH$, where $J^t$ is the operator adjoint to $J$ in $\left(\cH, \left( \cdot, \cdot \right) _{\cH}\right)$. For a given subspace $\cX\subset\cH$ we denote $\cX^0:=\left\{u\in \cH\big|\ \omega_{\cH}(u,v)=0 \text{\ for all\ } v\in\cH \right\}$ and say that $\cX$ is isotropic if $\cX\subset \cX^0$, coisotropic if $\cX^0\subset \cX$ and Lagrangian if $\cX=\cX^0$. The Lagrangian-Grassmannian is the set $\Lambda(\cH)$ of the Lagrangian subspaces of $\cH$, equipped with the metric $d(\cX,
 \cY):=\|P_{\cX}-P_{\cY}\|_{\cB(\cH)}$, where $P_{\cX}$ is the orthogonal projection onto $\cX$.  A pair of subspaces $(\cX,\cY)$ is called Fredholm if $\dim(\cX\cap \cY)<+\infty$ and $\cX+\cY$ is closed and $\codim(\cX+\cY)<+\infty$. For a fixed Lagrangian subspace $\cX$ we denote $F\Lambda (\cX):=\left\{\cY\in \Lambda(\cX) \big| \left(\cX, \cY \right) \text{\ is a Fredholm pair}\right\}$, the set $F\Lambda(\cX)$, equipped with metric $d$ is called the Fredholm-Lagrangian-Grassmannian of $\cX$. 

For any Lagrangian subspace $\cX$, one has $(\cX)^{\perp}=J\cX$ and thus for each $u\in\cH$ there exist $u_1, u_2\in \cX$, such that $u=u_1+Ju_2$. For a given complex number $\alpha+\bfi\beta$ we define $(\alpha+\bfi\beta)u:=\alpha u_1-\beta u_2+J(\alpha u_2+\beta u_1)$. We equip $\cH$ with this multiplication by complex numbers and the standard addition and denote the obtained this way vector space by $\cH_J$. Note, that $\cH_J$ does not depend on the choice of the Lagrangian subspace $\cX$ \cite{F04}. The vector space $\cH_J$ becomes a Hilbert space when equipped with the inner product $(u,v)_J:=(u,v)_{\cH}-\bfi\omega_{\cH}(u,v)$, $u,v\in \cH_J$.   

For a given Lagrangian subspace $\cX$, we define the Souriau map as 
\begin{align}
& S_{\cX}:F\Lambda(\cX)\rightarrow \cB(\cH_J),\,
S_{\cX}(\cY):= (I_{\cH_{J}}-2P_\cY)(2P_\cX-I_{\cH_{J}}),
\end{align}
and recall from \cite[Propositions 2.46, 2.52]{F04} that 
\begin{equation}\no
S_{\cX}(\cY)\in \cU(\cH_J) \text{ is a unitary operator and $S_{\cX}(\cY)+I_{\cH_J}$ is Fredholm operator,\ } 
\end{equation}
\begin{equation}\no
\dim_{\R}(\cX \cap \cY)= \dim_{\C}\ker(S_{\cX}(\cY)+I_{\cH_J}).
\end{equation}
We consider a continuous path $\Upsilon$ with values in $F\Lambda(\cX)$, i.e. $\Upsilon\in C([a,b], F\Lambda(\cX))$, and the corresponding path $\upsilon:t\mapsto S_{\cX}(\Upsilon(t))$ in $\cU(\cH_J)$.  The Maslov index of $\Upsilon$ is defined as the spectral flow of the family $\upsilon$ through the point $-1$ which could be only an isolated point in $\spec(\upsilon(t))$ since the operators $\upsilon(t)+I_{\cB(\cH_J)}$ are Fredholm and the spectrum of the unitary operators $\upsilon(t)$ belong to the unite circle. It follows that there exists a partition $a=t_0<t_1<\cdots<t_N=b$ and positive numbers $\varepsilon_j\in(0,\pi)$, such that  $\e^{\bfi(\pi+\varepsilon_j)}\not \in \spec (\upsilon(t))$ for each $1\leq j\leq N$. We define 
$k(t,\varepsilon):=\sum\limits_{0\leq \theta\leq \varepsilon}\ker(\upsilon(t)-\e^{\bfi(\pi+\theta)})$ and  introduce the Maslov index as follows:
\begin{equation}\lb{dfnMInd}
\text{Mas}(\Upsilon,\cX):=\sum\limits_{j=1}^{N}\left(k(t_j,\varepsilon_j)-k(t_{j-1},\varepsilon_j)\right),
\end{equation}
see \cite[Definition 3.2]{F04}. By \cite[Proposition 3.3]{F04} the number Mas$(\Upsilon,\cX)$ is well defined, that is, it is independent on the choice of the partition $t_j$ and $\varepsilon_j$. 

We will now recall how to compute the Maslov index via crossing forms. Assume that $\Upsilon\in C^1([a,b], F\Lambda(\cX))$ and let $t_*\in[a,b]$. By \cite[Lemma 3.8]{CJLS} there exists a neighbourhood $\Sigma_0$ of $t_*$ and a $C^1(\Sigma_0, \cB(\Upsilon(t_*), \Upsilon(t_*)^{\perp}))$ family $R_t$, such that $\Upsilon(t)=\{u+R_tu\big| u\in \Upsilon(t_*)\}$, for $t\in \Sigma_0$. We will use the following terminology from \cite[Definition 3.20]{F04}.
\begin{definition}\label{def21} Let $\cX$ be a Lagrangian subspace and $\Upsilon\in C^1([a,b], F\Lambda(\cX))$.

{\it (i)} We call $t_*\in[a,b]$ a conjugate point or crossing if $\Upsilon(t_*)\cap \cX\not=\{0\}$.

{\it (ii)} The finite dimentional form $$\cQ_{t_*,\cX}(u,v):=\frac{d}{dt}\omega_{\cH}(u,R_tv)\big|_{t=t_*}=\omega_{\cH}(u, \dot{R}_{t=t_*}v), \text{\ for\ }u,v \in \Upsilon(t_*)\cap \cX,$$  is called the crossing form at the crossing $t_*$.

{\it (iii)} The crossing $t_*$ is called regular if the form $\cQ_{t_*,\cX}$ is non-degenerate, positive if $\cQ_{t_*,\cX}$ is positive definite, and negative if $\cQ_{t_*,\cX}$ is negative definite.

\begin{theorem} [\cite{F04}, Corollary 3.25]\lb{masform}
If $t_*$ is a regular crossing of a $C^1([0,1], F\Lambda(\cX))$ path $\Upsilon$ then there exists $\delta>0$ such that

(i) $\mi(\Upsilon_{|t-t_*|<\delta},\cX)=\sign \cQ_{t_*,\cX}$, if $t_*\in (0,1)$,

(ii) $\mi(\Upsilon_{0\leq t\leq \delta},\cX)=-n_{-}(\cQ_{t_*,\cX})$, if $t_*=0$,

(iii) $\mi(\Upsilon_{1-\delta\leq t\leq 1}, \cX)=n_{+}(\cQ_{t_*,\cX})$, if $t_*=1$.
\end{theorem}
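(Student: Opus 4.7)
The plan is to reduce the computation of the Maslov index at $t_*$ to a local spectral flow calculation for the unitary path $\upsilon(t)=S_\cX(\Upsilon(t))\in\cU(\cH_J)$ near the eigenvalue $-1$, and then to match this spectral flow with the signature data of the crossing form $\cQ_{t_*,\cX}$. First I would use the hypothesis that $\Upsilon\in C^1([0,1],F\Lambda(\cX))$ together with \cite[Lemma 3.8]{CJLS} to parametrize, in a neighborhood $\Sigma_0$ of $t_*$, the subspaces $\Upsilon(t)=\{u+R_tu:u\in\Upsilon(t_*)\}$ with $R_t\in\cB(\Upsilon(t_*),\Upsilon(t_*)^\perp)$ and $R_{t_*}=0$. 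By definition of a crossing, $\Upsilon(t_*)\cap\cX\ne\{0\}$, hence $-1\in\spec(\upsilon(t_*))$ with multiplicity $\dim_\R(\Upsilon(t_*)\cap\cX)=\dim_\C\ker(\upsilon(t_*)+I_{\cH_J})$ by the Souriau correspondence. Because $\upsilon(t)+I_{\cH_J}$ is Fredholm and $\upsilon(t)$ is unitary, $-1$ is an isolated point of $\spec(\upsilon(t_*))$, so for $\delta$ small enough we may restrict attention to the finite-dimensional spectral subspace $E(t)$ of $\upsilon(t)$ associated with an arc of the unit circle around $-1$ that separates from the rest of the spectrum uniformly in $|t-t_*|<\delta$.

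Next I would compute the derivative $\dot{\upsilon}(t_*)$ in terms of $\dot{R}_{t_*}$ from the formulas $P_{\Upsilon(t)}=(I+R_t)(I+R_t^*R_t)^{-1}(I+R_t^*)$ and $\upsilon(t)=(I_{\cH_J}-2P_{\Upsilon(t)})(2P_\cX-I_{\cH_J})$; using $R_{t_*}=0$ this yields a clean expression $\dot{\upsilon}(t_*)=-2\dot{R}_{t_*}(2P_\cX-I_{\cH_J})$ on $\Upsilon(t_*)$. A standard analytic perturbation argument (Kato) then provides $C^1$ eigenvalues $\e^{\bfi\theta_k(t)}$ of $\upsilon(t)$ branching out of $-1$, with $\theta_k(t_*)=\pi$. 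The key computation is to relate $\dot\theta_k(t_*)$ to the crossing form. Specifically, choose an orthonormal (in $(\cdot,\cdot)_J$) basis $\{e_k\}$ of $\ker(\upsilon(t_*)+I_{\cH_J})$; differentiating the eigenvalue relation and using $\omega_\cH(u,v)=\Im(u,v)_J$ gives, after a short computation, $\dot\theta_k(t_*)=-\cQ_{t_*,\cX}(e_k,e_k)$ (up to a positive normalization constant depending only on the identifications made). Thus the eigenvalues cross $-1$ transversally precisely when $\cQ_{t_*,\cX}$ is non-degenerate, i.e.\ when the crossing is regular; eigenvalues associated with positive directions of $\cQ_{t_*,\cX}$ move in one sense through $-1$ and those associated with negative directions move in the opposite sense.

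With this local picture, the definition \eqref{dfnMInd} of $\mi(\Upsilon,\cX)$ as the spectral flow of $\upsilon$ through $-1$ gives the conclusion directly. For $t_*\in(0,1)$ one contributes $+1$ for every eigenvalue that crosses $-1$ in the direction corresponding to increasing $\theta$ and $-1$ for those crossing in the opposite direction, and the net count is $n_+(\cQ_{t_*,\cX})-n_-(\cQ_{t_*,\cX})=\sign\cQ_{t_*,\cX}$, proving (i). For the endpoint $t_*=0$ only the outgoing side $t\in[0,\delta]$ is counted; by the partition convention in \eqref{dfnMInd} one registers only those eigenvalues that are strictly above $-1$ for $t>0$, which come from the negative directions of the form, giving $-n_-(\cQ_{t_*,\cX})$ and proving (ii). Case (iii) at $t_*=1$ is symmetric: only the eigenvalues that arrive at $-1$ from above as $t\nearrow 1$ are counted, contributing $n_+(\cQ_{t_*,\cX})$.

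The main obstacle is the sign bookkeeping: pinning down the precise direction of eigenvalue motion relative to the chosen parametrization of $[0,\pi+\varepsilon]$ in \eqref{dfnMInd}, and making sure that the factor coming from $\dot{\upsilon}(t_*)=-2\dot{R}_{t_*}(2P_\cX-I_{\cH_J})$ carries the correct sign when identified with $\omega_\cH(\cdot,\dot R_{t_*}\cdot)$. Once this local correspondence between $\dot\theta_k(t_*)$ and the eigenvalues of $\cQ_{t_*,\cX}$ is fixed, the three cases follow immediately from the definition of spectral flow and a choice of $\delta$ small enough that no other eigenvalue of $\upsilon(t)$ enters the arc near $-1$, and that all branches $\theta_k(t)$ cross $\pi$ transversally exactly once on $|t-t_*|<\delta$.
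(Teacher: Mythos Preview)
The paper does not prove this theorem at all: it is quoted verbatim as \cite[Corollary 3.25]{F04} and used as a black box in Lemmas \ref{g1}--\ref{g2n}. There is therefore no proof in the paper to compare your attempt against.

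That said, your outline is essentially the argument in \cite{F04}: pass to the Souriau unitary $\upsilon(t)$, localize near $-1$ using that $\upsilon(t)+I$ is Fredholm, diagonalize with Kato's perturbation theory, and identify the derivatives $\dot\theta_k(t_*)$ of the eigen-phases with the crossing form. The structure is correct and would succeed if completed carefully.

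The one place where your sketch is genuinely incomplete (and you flag it yourself) is the sign of $\dot\theta_k(t_*)$ versus $\cQ_{t_*,\cX}(e_k,e_k)$. With the spectral-flow convention \eqref{dfnMInd} used here, $k(t,\varepsilon)$ counts eigenvalues with phase in $[\pi,\pi+\varepsilon]$, \emph{including} the endpoint $\pi$. Tracing this through, the correct relation is $\dot\theta_k(t_*)=c\,\cQ_{t_*,\cX}(e_k,e_k)$ with $c>0$, not the negative sign you wrote; otherwise (i) would come out as $-\sign\cQ_{t_*,\cX}$. Once the sign is fixed, your endpoint analyses for (ii) and (iii) go through exactly as you describe: at $t_*=0$ the eigenvalues sitting at $-1$ are already counted in $k(0,\varepsilon)$, so only the $n_-$ branches that leave the arc $[\pi,\pi+\varepsilon]$ change the count, giving $-n_-(\cQ_{0,\cX})$; the case $t_*=1$ is symmetric and yields $n_+(\cQ_{1,\cX})$. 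So the only real work you have left is the honest computation of $\dot P_{\Upsilon(t_*)}$ from $R_t$ and the identification $\dot\theta_k(t_*)=-i\,(e_k,\upsilon(t_*)^{-1}\dot\upsilon(t_*)e_k)_J$, carried out with enough care to pin down the positive proportionality.
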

\end{definition}
\subsection{The symplectic space of abstract boundary values.}
Let $A$ be a densely defined, closed, symmetric operator in $\cH$, i.e. $A:\dom(A)\subset \cH\rightarrow \cH$, and  $(Ax,y)_{\cH}=(x,Ay)_{\cH}\text {\ for all\ } x,y \in \dom(A)$, so that $A\subset A^*.$ We denote $\cD_{\max}:=\dom(A^*)$ and $\cD_{\min}:=\dom(A)$. Since both $A$ and $A^*$ are closed linear operators, the vector spaces $\cD_{\max}$ and $\cD_{\min}$ equipped with the $A^*$-graph inner products are complete. We set $\cD_{\max}^{\cG}:=\big(\cD_{\max}, (\cdot,\cdot)_{\cG} \big)$ and $\cD_{\min}^{\cG}:=\big(\cD_{\min}, (\cdot,\cdot)_{\cG} \big)$, where $(x,y)_{\cG}=(x,y)_{\cH}+(A^*x,A^*y)_{\cH}$ for $x,y\in \cD_{\max}$. The quotient space $\big(\cD^{\cG}_{\max}/\cD^{\cG}_{\min}, |||\cdot|||\big)$ equipped with norm $|||\gamma(x)|||:=\inf\big\{\|x+y\|_{\cH}\big|\,y\in \cD_{\min} \big\}$ is  a Banach space; here and below $\gamma$ is the natural projection, that is,
\begin{equation}
\gamma:\cD^{\cG}_{\max}\rightarrow \cD^{\cG}_{\max}/\cD^{\cG}_{\min},\ \gamma(x)=[x],\end{equation} 
where
$[x]\in \cD^{\cG}_{\max}/\cD^{\cG}_{\min}$ is the class corresponding to the vector $x\in \cD^{\cG}_{\max}$. Next, we consider the Hilbert space \[B:=(\cD_{\min})^{\perp, (\cdot, \cdot)_\cG}=\{y\in \cD^{\cG}_{\max}\big| (x,y)_{\cG}=0 \text{\ for all\ } x\in \cD^{\cG}_{\min} \}\]
 equipped with the $A^*$-graph scalar product $(\cdot, \cdot)_{\cG}$ and introduce the isometry $i\in \cB(\cD^{\cG}_{\max}/\cD^{\cG}_{\min}, B)$ between these two spaces by letting $i:\gamma(x)\mapsto P_{B}x$, where $P_{B}\in \cB(\cD_{\max}^{\cG})$ is the orthogonal projector onto $B$. Obviously, $i \gamma(x)$ does not depend on a particular representative of the class $\gamma(x)=[x]$, moreover, $|||\gamma(x)|||=\|P_Bx\|_{\cG}.$ The map $i$ induces an inner product on $\cD_{\max}/\cD_{\min}$ that agrees with the norm  $|||\cdot|||$.
We introduce the real Hilbert space $\cH_A$ by letting 
\begin{equation}
\cH_{A}:=(\cD^{\cG}_{\max}/\cD^{\cG}_{\min},\langle\cdot, \cdot\rangle),
\text{ where }
\langle\gamma(x),\gamma(y)\rangle:=(i\gamma(x),i\gamma(y))_{\cG}.
\end{equation} The Hilbert space $\cH_A$ 
is called the space of abstract boundary values. It has a natural
 symplectic form
\begin{align}
&\omega:\cH_A\times \cH_A \rightarrow \R,\,
\omega(\gamma(x),\gamma(y)):=(A^*x,y)_{\cH}-(x,A^*y)_{\cH}.\lb{2.4form}
\end{align}  The space $\cH_A$ is described in \cite{BbF95}, where it is shown that the form $\omega$ is bounded and non-degenerate. As pointed out in \cite{BbF95}, the space of abstract boundary values of a given symmetric operator contains information about its self-adjoint extensions. In particular, it is shown that a symmetric extension $A_\cD$ of the operator $A$ with the domain $\cD$ is self-adjoint if and only if the abstract traces $\gamma(x)$ of vectors $x$ from the domain $\cD$ of the extension form a Lagrangian subspace. 
 
 \begin{theorem}[\cite{BbF95}] \lb{absfur}
 (i) A symmetric extension $A_\cD$ of $A$ is self-adjoint if and only if $\gamma(\cD)$ is a Lagrangian subspace of the space $\cH_A$ of abstract boundary values.
 
 (ii) Let $A_\cD$ be a self-adjoint and Fredholm extension of $A$. Then $\gamma\big(\ker(A^*)\big)$ is a Lagrangian subspace of the space $\cH_A$ of abstract boundary values, moreover, $\gamma\big(\ker(A^*)\big)\in F\Lambda(\gamma(\cD))$.
 
 (iii) Let $V\in C^1([0,1], \cB(\cH))$ be a differentiable path of bounded, self-adjoint operators. If the self-adjoint extension $A_\cD$ of the symmetric operator $A$ has compact resolvent and $\ker\big(A^*+V_t \big)\cap\cD_{\min}=\{0\}$ for all $t\in[0,1]$, then the mapping $\Upsilon_{A}:t\mapsto\gamma\big(\ker(A^*+V_t)\big)$ belongs to $C^1([0,1], F\Lambda(\gamma(\cD)))$. Also, if $V$ is continuous, then so is $\Upsilon_A$.
 \end{theorem}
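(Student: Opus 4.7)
The plan is to derive the three parts from the Green-type symplectic identity built into \eqref{2.4form}, combined with the self-adjoint Fredholm extension theory and standard perturbation arguments from \cite{BbF95}.

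For part (i), the starting point is the identity $(A^* x, y)_\cH - (x, A^* y)_\cH = \omega(\gamma(x), \gamma(y))$ on $\cD_{\max}$. A vector $y \in \cD_{\max}$ belongs to $\dom(A_\cD^*)$ iff $\omega(\gamma(x), \gamma(y)) = 0$ for all $x \in \cD$, i.e., $\gamma(y) \in \gamma(\cD)^0$. Symmetry of $A_\cD$ then amounts to $\gamma(\cD) \subset \gamma(\cD)^0$, while self-adjointness $\dom(A_\cD^*) = \cD$ upgrades this to equality; these two conditions together give the Lagrangian characterization, and the converse reverses the same equivalences.

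For part (ii), isotropy of $\gamma(\ker A^*)$ follows at once from the Green identity. For coisotropy, take $w \in \cD_{\max}$ with $\gamma(w) \in \gamma(\ker A^*)^0$; then $A^* w \in (\ker A^*)^\perp = \overline{\Ran A}$. Fredholmness of $A_\cD$, together with closedness of $\cD_{\min}$ in the $A^*$-graph norm, forces $\Ran A$ to be closed (via the quotient isomorphism $\cD/\ker A_\cD \to \Ran A_\cD$ restricted to the closed subspace $(\cD_{\min} + \ker A_\cD)/\ker A_\cD$), so $A^* w = A v$ for some $v \in \cD_{\min}$, giving $A^*(w - v) = 0$ and $\gamma(w) = \gamma(w - v) \in \gamma(\ker A^*)$. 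The Fredholm-pair property comes from the identifications $\gamma(\ker A^*) \cap \gamma(\cD) = \gamma(\ker A_\cD)$ and $\codim(\gamma(\ker A^*) + \gamma(\cD)) = \dim \ker A_\cD$, both finite.

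For part (iii), I apply (ii) to each operator $A^* + V_t$: since $V_t \in \cB(\cH)$ is bounded and self-adjoint, $(A + V_t)^* = A^* + V_t$ and $A_\cD + V_t$ is a self-adjoint extension on $\cD$ that inherits compact resolvent (hence Fredholmness) from $A_\cD$ by the standard resolvent identity for large spectral parameter. Thus (ii) yields $\gamma(\ker(A^* + V_t)) \in F\Lambda(\gamma(\cD))$ for each $t$. For the $C^1$ regularity, the assumption $\ker(A^* + V_t) \cap \cD_{\min} = \{0\}$ makes $\gamma$ injective on $\ker(A^* + V_t)$; an implicit function theorem on the $A^*$-graph space $\cD_{\max}^\cG$ parametrizes $\ker(A^* + V_t)$ as a $C^1$ graph over $\ker(A^* + V_{t_0})$ by solving $(A^* + V_t)(u_0 + u_1) = 0$ with $u_0 \in \ker(A^* + V_{t_0})$ and $u_1$ in a fixed closed complementary subspace, and this descends through $\gamma$ to a $C^1$ family of projectors onto $\gamma(\ker(A^* + V_t))$ in $\cB(\cH_A)$. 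The analogous argument without derivatives gives continuity of $\Upsilon_A$ when $V$ is merely continuous.

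The principal obstacle is the $C^1$ regularity in (iii): the implicit function theorem must be executed within $\cD_{\max}^\cG$ and then transferred to $\cH_A$ via the quotient, with smoothness measured in the Lagrangian-Grassmannian metric on $F\Lambda(\gamma(\cD))$, where the $A^*$-graph quotient norm obscures direct perturbation arguments. This technical verification is carried out in \cite{BbF95}, and the theorem is an application of that abstract machinery.
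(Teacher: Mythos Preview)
The paper does not supply its own proof of this theorem: it is stated with attribution \cite{BbF95} and used as a black box, so there is no ``paper's proof'' to compare against. Your sketch is a reasonable outline of the standard arguments behind the result and, in particular, your handling of part~(iii)---reducing to (ii) via the bounded self-adjoint perturbation $A_\cD+V_t$ and then deferring the $C^1$ regularity of the family of kernels to the machinery in \cite{BbF95}---is exactly in the spirit of how the paper itself treats the statement.

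One small caution on part~(ii): the step ``Fredholmness of $A_\cD$ \ldots\ forces $\Ran A$ to be closed'' is the only place where your argument is not fully self-contained. The restriction of a Fredholm operator to a closed (graph-norm) subspace need not have closed range in general, and your parenthetical justification via the quotient isomorphism is a bit elliptic. In \cite{BbF95} this is handled through the abstract Cauchy data space framework rather than by proving closedness of $\Ran A$ directly; if you want a self-contained write-up you should either flesh out that step or follow their route.
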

 \begin{remark}\lb{rem2.4}
 Assuming hypotheses of Theorem \ref{absfur} {\it (iii)}, there exists a family of orthogonal projections $P:t\mapsto P_t\in\cB(\cD_{\max}^\cG)$, such that $\ran(P_t)=\ker(A^*+V_t)$ for all $t\in[0,1]$ and $P\in C^1\big([0,1], \cB(\cD_{\max}^\cG)\big)$. Moreover, according to \cite[Remark 3.5]{CJLS}, \cite[Section IV.1]{DK74}, \cite[Remark 6.11]{F04} for a given $t_0\in[0,1]$ there exists a smooth family of boundedly invertible operators $U: t\mapsto U_t\in\cB(\cD_{\max}^\cG)$ such that $U_tP_{t_0}=P_{t}U_t$ for all $t$ near $t_0$.
 \end{remark}
We are now ready to present our first principal result.
\begin{theorem}\lb{mormas}
(i) Let $A$ be a symmetric operator in a real Hilbert space $\cH$ and $A_\cD$ be its self-adjoint extension with the domain $\cD\subset\cH$. Let $V\in C^1([0,1], \cB(\cH))$ be a differentiable path of bounded, self-adjoint operators in $\cH$. Assume that $A_\cD$ has compact resolvent, and there exists $\lambda_{\infty}<0$, such that $\ker\big(A_\cD+V_t-\lambda\big)=\{0\}$ for all $t\in[0,1]$ and $\lambda<\lambda_{\infty}$, and $\ker\big(A^*+V_t-\lambda \big)\cap\cD_{min}=\{0\}$ for all $t\in[0,1], \lambda\in[\lambda_{\infty},0]$. Then for each $\tau\in(0,1)$
\begin{equation}\lb{mff1}
\mo(A_\cD+V_{\tau})-\mo(A_\cD+V_{1})=\mi(\gamma(\ker(A^*)+V_t)\big|_{\tau\leq t\leq 1},\gamma(\cD)).
\end{equation}
(ii) If, in addition to the assumptions in (i), for some $\tau\in(0,1)$ and all $t_*\in[\tau,1]$ such that $\ker\big(A_\cD+V_{t_*}\big)\not= \{0\}$ the restriction of the operator  $\dot{V_t}(t_*):=\frac {dV_t}{dt}{\big|_{t=t_*}}$ to the finite dimensional space $\ker\big(A_\cD+V_{t_*}\big)$  is positive definite then 
\begin{equation}\lb{mff2}
\mo(A_\cD+V_{\tau})-\mo(A_\cD+V_{1})=\sum\limits_{\tau< t \leq 1}\dim\big(\ker(A_\cD+V_t)\big).
\end{equation}
(iii) If, in addition to the assumptions in (i), for some $\tau\in(0,1)$ and all $t_*\in[\tau,1]$ such that $\ker\big(A_\cD+V_{t_*}\big)\not= \{0\}$  the restriction of the operator  $\dot{V_t}(t_*):=\frac {dV_t}{dt}{\big|_{t=t_*}}$ to the finite dimensional space $\ker\big(A_\cD+V_{t_*}\big)$ is negative definite then 
\begin{equation}\lb{mff3}
\mo(A_\cD+V_{\tau})-\mo(A_\cD+V_{1})=-\sum\limits_{\tau\leq t < 1}\dim\big(\ker(A_\cD+V_t)\big).
\end{equation}
\end{theorem}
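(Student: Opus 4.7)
The plan is to form a closed rectangular loop in the $(\lambda,t)$-parameter plane as depicted in Figure~1, apply homotopy invariance of the Maslov index, and identify the contribution of each side. I parameterize $\Gamma=\bigcup_{j=1}^{4}\Gamma_j$ counterclockwise by $s\in\Sigma$, set $W_s:=V_{t(s)}-\lambda(s)I_\cH$, and consider $\Upsilon(s):=\gamma(\ker(A^*+W_s))$. Applying Theorem~\ref{absfur}(iii) to the two-parameter family $V_t-\lambda I_\cH$ on the closed rectangle $[\lambda_\infty,0]\times[\tau,1]$, the hypothesis $\ker(A^*+V_t-\lambda)\cap\cD_{\min}=\{0\}$ guarantees a $C^1$ family of Lagrangian subspaces in $F\Lambda(\gamma(\cD))$ on the whole rectangle. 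Contractibility of $\Gamma$ together with additivity and homotopy invariance of the Maslov index then yield
\begin{equation*}
\sum_{j=1}^{4}\mi(\Upsilon|_{\Gamma_j},\gamma(\cD))=0.
\end{equation*}

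Next I would evaluate each side. On $\Gamma_4$ (at $\lambda=\lambda_\infty$) the hypothesis $\ker(A_\cD+V_t-\lambda_\infty)=\{0\}$ rules out crossings, so this side contributes $0$. On the horizontal sides $\Gamma_1$ (at $t=\tau$, $\dot\lambda>0$) and $\Gamma_3$ (at $t=1$, $\dot\lambda<0$) one has $\dot W_s=-\dot\lambda(s)I_\cH$, and a direct computation of the Maslov crossing form at a crossing $s_*$ yields (up to the standard sign convention) the quadratic form $\hat u\mapsto (\dot W_{s_*}\hat u,\hat u)_\cH$ on the finite-dimensional space $\ker(A_\cD+V_{t(s_*)}-\lambda(s_*)I_\cH)$, which is negative definite on $\Gamma_1$ and positive definite on $\Gamma_3$. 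The interior crossings are exactly the eigenvalues of $A_\cD+V_\tau$, respectively $A_\cD+V_1$, in $(\lambda_\infty,0)$; Theorem~\ref{masform}, together with its endpoint cases (ii)--(iii) which kill any crossing at $\lambda=0$ by sign-definiteness, then gives $\mi(\Upsilon|_{\Gamma_1},\gamma(\cD))=-\mo(A_\cD+V_\tau)$ and $\mi(\Upsilon|_{\Gamma_3},\gamma(\cD))=\mo(A_\cD+V_1)$. Substituting into the loop identity and noting that $\Upsilon|_{\Gamma_2}$ is precisely $t\mapsto\gamma(\ker(A^*+V_t))$, $\tau\leq t\leq 1$, yields (\ref{mff1}).

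For (ii) I would compute $\mi(\Upsilon|_{\Gamma_2},\gamma(\cD))$ by Theorem~\ref{masform}: along $\Gamma_2$ one has $\dot W_s=\dot V_t$, so by assumption the crossing form at each $t_*\in[\tau,1]$ with $\ker(A_\cD+V_{t_*})\neq\{0\}$ is positive definite. Theorem~\ref{masform}(i) then assigns $+\dim\ker(A_\cD+V_{t_*})$ to each interior crossing, Theorem~\ref{masform}(iii) does the same at $t=1$, and Theorem~\ref{masform}(ii) assigns $0$ at $t=\tau$; summing gives (\ref{mff2}). Part (iii) follows by the symmetric argument: the form is negative definite at every crossing, Theorem~\ref{masform}(i) assigns $-\dim\ker$ to each interior crossing, Theorem~\ref{masform}(ii) assigns $-\dim\ker$ at $t=\tau$, and Theorem~\ref{masform}(iii) kills the $t=1$ contribution.

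The main obstacle is the identification of the Maslov crossing form in the abstract setting. Using Remark~\ref{rem2.4}, $\Upsilon$ is represented locally near $s_*$ as the graph of an operator $R_s\in\cB(\Upsilon(s_*),\Upsilon(s_*)^\perp)$, and one must differentiate the defining relation $(A^*+W_s)(I+R_s)u=0$, pair with a second vector from $\ker(A^*+W_{s_*})\cap\cD$, and use the symplectic identity $\omega(\gamma(x),\gamma(y))=(A^*x,y)_\cH-(x,A^*y)_\cH$ together with self-adjointness of $A_\cD$ to reduce $\omega(u,\dot R_{s_*}v)$ to a multiple of $(\dot W_{s_*}\hat u,\hat v)_\cH$ on lifted kernel vectors. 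A secondary bookkeeping point is the handling of corners of $\Gamma$: when $\lambda=0$ is an eigenvalue of $A_\cD+V_\tau$ or $A_\cD+V_1$, two sides share a crossing, and one must apply the endpoint formulas in Theorem~\ref{masform} consistently on both sides to avoid double-counting.
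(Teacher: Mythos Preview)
Your proposal follows essentially the same approach as the paper: form the rectangular loop, invoke homotopy invariance and additivity to get $\sum_j\mi(\Upsilon|_{\Gamma_j},\gamma(\cD))=0$, and evaluate each side via the crossing form and Theorem~\ref{masform}. The identifications for $\Gamma_1,\Gamma_3,\Gamma_4$ and the endpoint bookkeeping all match the paper's Lemmas~\ref{g1}--\ref{g2n}.

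One technical point worth flagging concerns the crossing-form computation. You write ``differentiate the defining relation $(A^*+W_s)(I+R_s)u=0$'', but $R_s$ (the paper's $B_{(s+s_*)}$) takes values in $\Upsilon(s_*)^\perp$ inside the \emph{quotient} space $\cH_A$, so that equation is not literally well-posed as stated. The paper handles this by passing to the unique lift $u_s\in\ker(A^*+W_{s+s_*})$ of the class $[u_0]+B_{(s+s_*)}[u_0]$ (uniqueness uses precisely the hypothesis $\ker(A^*+W_{s+s_*})\cap\cD_{\min}=\{0\}$), then computing $\omega([u_0],B_{(s+s_*)}[u_0])$ directly from \eqref{2.4form} and the kernel conditions---obtaining an explicit expression such as $-(u_0,su_s)_\cH$ on $\Sigma_1$ or $(u_0,(V_{\tau+s_*+s}-V_{\tau+s_*})u_s)_\cH$ on $\Sigma_2$---and only afterwards passing to the limit of the difference quotient. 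This requires the continuity (in fact $C^1$) of $s\mapsto u_s$, which the paper verifies separately via the transformation operators of Remark~\ref{rem2.4}. Your ``differentiate-and-pair'' route can be made rigorous once the lifting is done carefully, but the paper's direct difference-quotient computation sidesteps the need to justify applying $A^*$ to a differentiated family.
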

 We will split the proof in several steps.
 We notice that $V_t(A_\cD-\lambda)^{-1}\in \cB_{\infty}(\cH)$ for $\lambda\in\C\setminus \spec(A_\cD)$, that is, $V_t$ is a relatively compact perturbation of $A_\cD$, and thus $A_\cD+V_t$ has purely discrete spectrum. For a fixed $\tau\in(0,1]$ we introduce the following one parameter family of bounded self-adjoint operators $W_s$  in $\cH$ and fix a parametrization $\Sigma=\cup_{j=1}^4\Sigma_j\to\Gamma=\cup_{j=1}^4\Gamma_j$, $s\mapsto (t(s),\lambda(s))$ of the square in Figure 1:
 \begin{align}
 & W_s:=V_{t(s)}-\lambda(s)I_{\cH},\, s\in \Sigma=\cup_{j=1}^4\Sigma_j, \lb{par1}\end{align}
  where
 \begin{align}
 &\lambda(s)=s,\, t(s)=\tau ,\, s\in\Sigma_1:=[\lambda_{\infty},0],\lb{par2}\\
 &\lambda(s)=0,\, t(s)=s+\tau ,\, s\in\Sigma_2:=[0,1-\tau ],\lb{par3}\\
 &\lambda(s)= -s+1-\tau ,\, t(s)= 1,\, s\in\Sigma_3:=[1-\tau ,1-\tau -\lambda_{\infty}],\lb{par4}\\
 &\lambda(s)=\lambda_{\infty},\, t(s)=-s+2-\tau -\lambda_{\infty},\lb{par5}\\
 &\hskip3cm s\in\Sigma_4:=[1-\tau -\lambda_{\infty}, 2(1-\tau )-\lambda_{\infty}].\no
 \end{align}
As in Theorem \ref{absfur} {\it (iii)}, we consider the continuous path
$\Upsilon_{A}$ of Lagrangian subspaces in the symplectic Hilbert space $\cH_A$ of abstract boundary values, corresponding to symmetric operator $A$, that is, the path $\Upsilon_{A}:s\mapsto\gamma\big(\ker(A^*+W_s)\big)$, and observe that $\Upsilon_{A}\big|_{{\Sigma_i}}\in C^1(\Sigma_i, F\Lambda(\gamma(\cD))),\ 1\leq i\leq 4.$ 
\begin{lemma}\lb{g1} Under the assumptions in Theorem \ref{mormas} (i) one has 
\begin{equation}\lb{g1f}
\mi(\Upsilon_{A}\big|_{{\Sigma_1}},\gamma(\cD))=-\mo(A_\cD+V_{\tau}).
\end{equation}
\end{lemma}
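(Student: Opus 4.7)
The plan is to identify conjugate points on $\Sigma_1$ with negative eigenvalues of $A_\cD + V_\tau$, show that every such crossing is regular with a strictly negative definite crossing form, and then sum the signed contributions using Theorem \ref{masform}.

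First I will convert "crossing" into "eigenvalue." Along $\Sigma_1$ we have $W_s = V_\tau - s I_\cH$, so $\Upsilon_A(s) = \gamma(\ker(A^* + V_\tau - sI))$. A point $s_* \in \Sigma_1$ is a crossing precisely when there is a nonzero $u \in \ker(A^* + V_\tau - s_* I)$ with $\gamma(u) \in \gamma(\cD)$. Because $\cD_{\min} \subset \cD$, any such $u$ lies in $\cD$, hence in $\ker(A_\cD + V_\tau - s_* I)$, so $s_*$ must be an eigenvalue of $A_\cD + V_\tau$. The hypothesis $\ker(A^*+W_s) \cap \cD_{\min} = \{0\}$ makes $\gamma$ injective on $\ker(A^*+W_{s_*})$, yielding
\begin{equation*}
\dim\bigl(\Upsilon_A(s_*) \cap \gamma(\cD)\bigr) = \dim \ker(A_\cD + V_\tau - s_* I).
\end{equation*}
Compactness of the resolvent makes the spectrum discrete, so there are only finitely many crossings in $[\lambda_\infty, 0]$; by choosing $\lambda_\infty$ slightly smaller if necessary I may assume $\lambda_\infty \notin \spec(A_\cD + V_\tau)$.

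Next I will compute the crossing form at a crossing $s_* \in \Sigma_1$. Using the smooth family of invertible operators $U_s$ from Remark \ref{rem2.4}, for any $u \in \ker(A_\cD + V_\tau - s_* I)$ the curve $u_s := U_s u$ satisfies $(A^* + W_s) u_s = 0$ identically in $s$ near $s_*$ with $u_{s_*} = u$, so $\gamma(u_s) \in \Upsilon_A(s)$ is a smooth lift of $\gamma(u)$. Differentiating $\omega(\gamma(u), \gamma(u_s)) = (A^* u, u_s)_\cH - (u, A^* u_s)_\cH$, substituting $A^* u = -W_{s_*} u$ and $A^* u_s = -W_s u_s$, and using self-adjointness of $W_s$, the two terms involving $\dot u_{s_*}$ cancel and I obtain
\begin{equation*}
\cQ_{s_*,\gamma(\cD)}(\gamma(u), \gamma(u)) = (u, \dot W_{s_*} u)_\cH.
\end{equation*}
Since $\dot W_s \equiv -I_\cH$ on $\Sigma_1$, the crossing form is strictly negative definite, hence every crossing is regular and negative.

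Finally I assemble the Maslov index from Theorem \ref{masform}. Each interior crossing $s_* \in (\lambda_\infty, 0)$ contributes $\sign(\cQ_{s_*,\gamma(\cD)}) = -\dim \ker(A_\cD + V_\tau - s_* I)$; the possible right-endpoint crossing at $s = 0$ contributes $n_+(\cQ_{0,\gamma(\cD)}) = 0$ because the form is negative definite, which is exactly what makes a zero eigenvalue of $A_\cD + V_\tau$ uncounted in the Morse index; and the choice of $\lambda_\infty$ rules out a left-endpoint crossing. Summing gives
\begin{equation*}
\mi(\Upsilon_A|_{\Sigma_1}, \gamma(\cD)) = -\sum_{s_* \in (\lambda_\infty, 0)} \dim \ker(A_\cD + V_\tau - s_* I) = -\mo(A_\cD + V_\tau).
\end{equation*}
The main technical obstacle is justifying the crossing-form formula above: one must verify that differentiating $\omega$ along the curve $\gamma(U_s u)$ produces the same quantity as Definition \ref{def21}, which uses the graph parametrization $R_s: \Upsilon_A(s_*) \to \Upsilon_A(s_*)^\perp$. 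This parametrization-independence is a standard fact about crossing forms for Lagrangian paths, but it should be stated explicitly in the infinite-dimensional symplectic setting at hand.
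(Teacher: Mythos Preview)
Your proposal is correct and follows essentially the same architecture as the paper's proof: identify crossings on $\Sigma_1$ with eigenvalues of $A_\cD+V_\tau$, show the crossing form equals $-\|u\|_\cH^2$, and sum the contributions via Theorem~\ref{masform}. The one technical difference is in how the lift $s\mapsto u_s\in\ker(A^*+W_s)$ is constructed. The paper takes the unique $u_s$ with $\gamma(u_s)=[u_0]+B_{s+s_*}[u_0]$, i.e.\ the lift coming straight from the graph parametrization of Definition~\ref{def21}; this sidesteps any parametrization-independence issue but forces them to prove separately that $s\mapsto u_s$ is continuous (done via the operators $T_s$ and $U_s$). You instead take $u_s=U_su$ directly from Remark~\ref{rem2.4}, which is manifestly smooth but is not a priori the graph-parametrization lift, so you must invoke the (standard) fact that $\tfrac{d}{ds}\omega(\gamma(u),\gamma(u_s))\big|_{s=s_*}$ is independent of the smooth curve $s\mapsto\gamma(u_s)\in\Upsilon_A(s)$ through $\gamma(u)$. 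You flag this honestly; it follows in two lines from $R_{s_*}=0$ and isotropy of $\Upsilon_A(s_*)$. One minor caveat: your trick of ``choosing $\lambda_\infty$ slightly smaller'' to avoid a left-endpoint crossing also requires the hypothesis $\ker(A^*+V_t-\lambda)\cap\cD_{\min}=\{0\}$ on the enlarged interval, which is not literally guaranteed by Theorem~\ref{mormas}(i) as stated (though it holds in all the applications).
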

\begin{proof}
The proof of this lemma relies on the computations made in the proof of Theorem 5.1 in \cite{BbF95}, specifically, see formula (5.3) in \cite{BbF95}.
  
 Let $s_*\in (\lambda_{\infty}, 0)$ be a conjugate point, i.e. $\cK_{s_*}:=\Upsilon_A(s_*)\cap \gamma(\cD)\not=\{0\}$. By Lemma \ref{crossex} there exists a small neighbourhood $\Sigma_{s_*}\subset(\lambda_{\infty}, 0)$ of $s_*$ and a family $(s+s_*)\mapsto B_{(s+s_*)}$ in $C^1\big(\Sigma_{s_*}, \cB(\Upsilon_A(s_*), \Upsilon_A(s_*)^{\perp})\big),\ B_{s_*}=0_{\cH}$, such that $\Upsilon_A(s)=\{[u]+B_{s+s_*}[u]\big| [u]\in \Upsilon_A(s_*) \}$ for all $(s+s_*)\in \Sigma_{s_*}$. Let us fix $u_0\in \ker\big(A^*+W_{s_{*}}\big)$ with $W_s$ from \eqref{par1} and consider the family $[u_0]_s:=[u_0]+B_{(s+s_*)}[u_0]$ with small $|s|$. Since $$\ker\big(A^*+W_{(s+s_*)}\big)\cap \cD_{\min}=\ker\big(A^*+V_{\tau}-(s+s_*)I_{\cH}\big)\cap \cD_{\min}=\{0\},$$ for small $|s|$, there exists a unique $u_s\in\ker\big(A^*+W_{(s+s_*)}\big)$ such that $[u_s]=[u_0]_s$. Next, using \eqref{2.4form} and \eqref{par2} we calculate:
\begin{align}
\omega([u_0], B_{(s+s_*)}[u_0])=&\big(A^*u_0, u_{s}-u_0\big)_{\cH}-\big(u_0, A^*(u_{s}-u_0)\big)_{\cH}\no\\
=&\big((A^*+W_{s_*})u_0, u_{s}-u_0\big)_{\cH}-\big(u_0, (A^*+W_{s_*})(u_{s}-u_0)\big)_{\cH}\no\\
=&-\big(u_0, (A^*+W_{s_*})u_{s}\big)_{\cH}=-\big(u_0, (A^*+W_{(s+s_*)})u_{s}\big)_{\cH}\no\\
&-\big(u_0,(W_{s_*}-W_{(s+s_*)})u_{s}\big)_{\cH}=-\big(u_0,s u_{s}\big)_{\cH}.\
\end{align} 
 Assuming, for a moment, that the mapping $s\mapsto u_s$ is continuous, we proceed by evaluating  the crossing form
\begin{align}
 \cQ_{s_*,\gamma(\cD)}([u_0],[u_0])&:=\frac{d}{ds}\omega([u_0], B_{(s+s_*)}[u_0])\big|_{s=0}=\lim\limits_{s\rightarrow 0} \frac{\omega([u_0], B_{(s+s_*)}[u_0])}{s}\no\\
&=\lim\limits_{s\rightarrow 0} \frac{-\big(u_0,s u_{s}\big)_{\cH}}{s}=-\|u_0\|^2_{\cH}.\no
\end{align}
By Theorem \ref{masform} {\it (i)} we therefore have 
\begin{align}\lb{masgamma1}
\mi\big(\Upsilon_{A}\big|_{\Sigma_{s_*}},\gamma(\cD)\big)&=\sign\ \cQ_{s_*,\gamma(\cD)}=-\dim\big(\Upsilon_A(s_*)\cap \gamma(\cD)\big)\no\\
&=-\dim \ker\big(A_\cD+V_{\tau}-s_*\big).
\end{align}
Formula \eqref{masgamma1} holds for all crossings $s_*\in \Sigma_1$, thus, using \eqref{par2},
\begin{align}
\mi\big(\Upsilon_{A}\big|_{\Sigma_1}\big)&=\sum\limits_{\substack{\lambda_{\infty}<s<0:\\
																		\Upsilon_A(s)\cap \gamma(\cD)\not=\{0\}	}}\sign\ \cQ_{s,\gamma(\cD)}+n_+(\cQ_{0,\gamma(\cD)})\no\\
&=-\sum\limits_{\lambda_{\infty}<s<0}\dim \ker\big(A_\cD+V_{\tau}-s\big)=-\mo(A_\cD+V_{\tau}),\lb{2.12}
\end{align}
since $\cQ_{0,\gamma(\cD)}$ is negative definite and thus  $n_+(\cQ_{0,\gamma(\cD)})=0$, while \eqref{2.12} follows from the absence of eigenvalues less than $\lambda_{\infty}$, which is guaranteed by the assumption $\ker\big(A_\cD+V_t-\lambda\big)=\{0\}$ for all $t\in[0,1]$ and $\lambda<\lambda_{\infty}$.

To complete the proof we need to show the continuity of $s\mapsto u_s$ at $s=0$. In fact, we will show that this map is continuously differentiable. Let us denote $\cN_s:=\ker\big(A^*+W_{(s_{*}+s)}\big)$. Since for all $(s+s_*)\in\Sigma$ one has $\cN_s\cap\cD_{\min}=\{0\}$, for each class $[x]\in \gamma\big(\cN_s\big)$ there exists a unique representative $x_{\ker}\in\cN_s$, so that $[x_{\ker}]=[x]$ and the mapping $T_s:\gamma\big(\cN_s\big)\rightarrow \cN_s,\ [x]\mapsto x_{\ker}$ is well defined and $T_s\gamma\big|_{\cN_s}=Id_{\cN_s}, \gamma\big|_{\cN_s}T_s=Id_{\gamma(\cN_s)}$. Then one has $u_s=T_s\big(I_{\cH_A}+B_{(s+s_*)}\big)[u_0]$ with $B_{(s+s_*)}$ defined in the beginning of the proof of the lemma. The differentiability of the function $s\mapsto u_s$ will be shown once we can proof the differentiability of the family of bounded operators $s\mapsto \big(I_{\cH_A}+B_{(s+s_*)}\big)^{-1}T_s^{-1}$. Here $|s|$ is sufficiently small, so that $\big(I_{\cH_A}+B_{(s+s_*)}\big)^{-1}$ is well defined and bounded, while the boundedness of $T_s^{-1}$ follows using the opening mapping theorem. Since the function  $s\mapsto \big(I_{\cH_A}+B_{(s+s_*)}\big)$ is differentiable, the differentiability of the function 
$s\mapsto \big(I_{\cH_A}+B_{(s+s_*)}\big)^{-1}$ follows and it remains to prove differentiability of the function 
 $s\mapsto T_s^{-1}$. Recall that $T_s^{-1}=\gamma\big|_{\cN_s}$. Employing Remark \ref{rem2.4}, one obtains $T_s^{-1}=\gamma\big|_{\cN_s}=\gamma U_sP_{\cN_{s*}}U_s^{-1}$, where $P_{\cN_{s}}$ is the orthogonal projection onto $\cN_s$ in the space $\cD_{\max}^{\cG}$. Thus the function $s\mapsto T_s^{-1}$ is differentiable, implying the desired property of the map $s\mapsto u_s$.
\end{proof}
Similarly, one can show the monotonicity of the Maslov index on $\Sigma_3$ and prove the following result.
\begin{lemma}\lb{g3}
Under the assumptions in Theorem \ref{mormas} (i) one has 
\begin{equation}\lb{g3f}
\mi(\Upsilon_A\big|_{\Sigma_3},\gamma(\cD))=\mo(A_\cD+V_1).
\end{equation}
\end{lemma}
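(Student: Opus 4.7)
The plan is to follow the strategy of the proof of Lemma \ref{g1} almost verbatim, with the key change being the sign of the crossing form and which endpoint of $\Sigma_3$ contributes to the Maslov index. On $\Sigma_3$ one has $t(s)\equiv 1$ and $\lambda(s)=-s+1-\tau$, so $W_s = V_1-\lambda(s)I_\cH$ depends on $s$ only through the scalar $\lambda(s)$, and the crossings of $\Upsilon_A\big|_{\Sigma_3}$ correspond precisely to those $s_\ast\in\Sigma_3$ with $\lambda(s_\ast)\in[\lambda_\infty,0]$ being an eigenvalue of $A_\cD+V_1$. In particular, because $\ker(A_\cD+V_1-\lambda)=\{0\}$ for $\lambda<\lambda_\infty$, all nontrivial crossings lie in the half-open interval $[1-\tau,\,1-\tau-\lambda_\infty)$, and the crossings in the open subinterval correspond bijectively to the strictly negative eigenvalues of $A_\cD+V_1$.

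Next I would compute the crossing form. Fixing a crossing $s_\ast\in\Sigma_3$ and a nonzero $u_0\in\ker(A^\ast+W_{s_\ast})$, I apply Remark \ref{rem2.4} together with the hypothesis $\ker(A^\ast+W_s)\cap\cD_{\min}=\{0\}$ to produce, exactly as in Lemma \ref{g1}, a continuously differentiable family $s\mapsto u_s\in\ker(A^\ast+W_{s+s_\ast})$ with $[u_s]=[u_0]+B_{s+s_\ast}[u_0]$. The symplectic identity
\[
\omega([u_0],B_{s+s_\ast}[u_0])=\bigl(u_0,(W_{s+s_\ast}-W_{s_\ast})u_s\bigr)_\cH
\]
then evaluates, using $W_{s+s_\ast}-W_{s_\ast}=(\lambda(s_\ast)-\lambda(s+s_\ast))I_\cH=s\, I_\cH$ on $\Sigma_3$, to $+s(u_0,u_s)_\cH$. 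Differentiating at $s=0$ yields
\[
\cQ_{s_\ast,\gamma(\cD)}([u_0],[u_0])=\|u_0\|_\cH^{2}>0,
\]
so every crossing on $\Sigma_3$ is regular and the crossing form is positive definite.

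Finally, I invoke Theorem \ref{masform}: each interior crossing $s_\ast\in(1-\tau,\,1-\tau-\lambda_\infty)$ contributes $\sign\cQ_{s_\ast,\gamma(\cD)}=\dim\ker(A_\cD+V_1-\lambda(s_\ast))$ to $\mi(\Upsilon_A\big|_{\Sigma_3},\gamma(\cD))$; the left endpoint $s_\ast=1-\tau$ (corresponding to $\lambda=0$) contributes $-n_-(\cQ_{s_\ast,\gamma(\cD)})=0$ since the form is positive; and the right endpoint $s_\ast=1-\tau-\lambda_\infty$ is not a crossing at all, by the choice of $\lambda_\infty$. Summing gives
\[
\mi(\Upsilon_A\big|_{\Sigma_3},\gamma(\cD))=\sum_{\lambda_\infty<\lambda<0}\dim\ker(A_\cD+V_1-\lambda)=\mo(A_\cD+V_1),
\]
which is \eqref{g3f}.

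The only nontrivial point, exactly as in Lemma \ref{g1}, is justifying the continuous differentiability of $s\mapsto u_s$ at the crossing so that the derivative computation is legitimate; this is where Remark \ref{rem2.4} is essential, and it transfers to $\Sigma_3$ without change because only the dependence of $W_s$ on $s$ (through the scalar $\lambda(s)$) matters, and it remains smooth. No genuinely new obstacle appears relative to Lemma \ref{g1}; the substantive change is the sign flip of the crossing form, which converts the contribution at the right endpoint of $\Sigma_1$ (where $\cQ$ was negative and $n_+=0$) into a nonzero contribution from the interior of $\Sigma_3$, producing $+\mo(A_\cD+V_1)$ rather than $-\mo(A_\cD+V_\tau)$.
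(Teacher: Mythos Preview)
Your proposal is correct and follows exactly the approach the paper intends: the paper's own proof is simply ``As in Lemma \ref{g1},'' and you have carried out precisely that adaptation, correctly identifying that on $\Sigma_3$ one has $W_{s+s_\ast}-W_{s_\ast}=sI_{\cH}$ so the crossing form becomes $+\|u_0\|_{\cH}^2$, and handling the endpoint contributions via Theorem \ref{masform} accordingly.
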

\begin{proof}
	As in Lemma \ref{g1}.
\end{proof}
\begin{lemma}\lb{g4}
Under the assumptions in Theorem \ref{mormas} (i) one has 
\begin{equation}\lb{g4f}
\mi(\Upsilon_A\big|_{\Sigma_4},\gamma(\cD))=0.
\end{equation}
\end{lemma}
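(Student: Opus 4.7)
The plan is to show that the path $\Upsilon_A|_{\Sigma_4}$ has \emph{no} conjugate points at all, from which \eqref{g4f} follows immediately by the definition \eqref{dfnMInd} of the Maslov index (an empty crossing set contributes nothing in Theorem \ref{masform}, and the partition argument defining $\mi$ is trivial when no eigenvalue of $\upsilon(s)$ ever reaches $-1$).

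First I unravel what a conjugate point on $\Sigma_4$ would mean. On this segment, $\lambda(s)\equiv\lambda_\infty$ is fixed while $t(s)$ sweeps through $[\tau,1]$. Suppose $s_*\in\Sigma_4$ is a crossing, so that there exists $0\neq\gamma(u)\in\Upsilon_A(s_*)\cap\gamma(\cD)$ with $u\in\cD_{\max}$ satisfying $(A^*+V_{t(s_*)}-\lambda_\infty)u=0$. The condition $\gamma(u)\in\gamma(\cD)$ produces $v\in\cD$ with $u-v\in\cD_{\min}$; since $\cD_{\min}\subset\cD$, this forces $u\in\cD$, and hence $u\in\ker(A_\cD+V_{t(s_*)}-\lambda_\infty)$. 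Nontriviality of the class $\gamma(u)$ guarantees $u\neq 0$, so $\lambda_\infty$ would be a genuine eigenvalue of $A_\cD+V_{t(s_*)}$ for some $t(s_*)\in[\tau,1]$.

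Next I argue this is precluded by the hypothesis. Theorem \ref{mormas}\,(i) provides $\lambda_\infty<0$ with $\ker(A_\cD+V_t-\lambda)=\{0\}$ for all $t\in[0,1]$ and $\lambda<\lambda_\infty$. Since $V_t$ is a relatively compact perturbation of the operator $A_\cD$ with compact resolvent, each $A_\cD+V_t$ has purely discrete spectrum depending continuously on $t$ across the compact interval $[0,1]$; consequently, at most finitely many eigenvalues of $A_\cD+V_t$ ever lie in any bounded interval as $t$ varies. We may therefore, without loss, assume the original $\lambda_\infty$ was chosen strictly below all such eigenvalues, i.e., $\lambda_\infty\notin\bigcup_{t\in[0,1]}\spec(A_\cD+V_t)$. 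This mild normalization does not alter any of the Morse indices in \eqref{mff1}--\eqref{mff3} and contradicts the existence of the eigenfunction $u$ produced above. Hence $\Sigma_4$ contains no conjugate points of $\Upsilon_A$ with respect to $\gamma(\cD)$, and \eqref{g4f} is established.

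There is no genuine obstacle here; the only cosmetic point is the normalization of $\lambda_\infty$ to sit strictly below the union of spectra. The essential content is the lifting observation that any class $\gamma(u)\in\gamma(\cD)\cap\Upsilon_A(s_*)$ on $\Sigma_4$ lifts to an honest $L^2$-eigenfunction of $A_\cD+V_{t(s_*)}$ at the level $\lambda_\infty$, so conjugacy on $\Sigma_4$ is reduced to an issue about the discrete spectrum below the chosen threshold.
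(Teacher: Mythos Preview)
Your proof is correct and follows essentially the same approach as the paper: a crossing on $\Sigma_4$ would produce a nonzero element of $\ker(A_\cD+V_{t(s_*)}-\lambda_\infty)$, which is ruled out by hypothesis, so there are no crossings and the Maslov index vanishes. The paper's own proof is two lines and simply asserts ``by the assumption the kernel is trivial,'' glossing over the fact that the hypothesis is stated for $\lambda<\lambda_\infty$ rather than $\lambda\le\lambda_\infty$.

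You are right to flag that cosmetic gap, but your remedy is more elaborate than needed. The appeal to relative compactness, discrete spectrum, and continuity in $t$ is unnecessary: the hypothesis already says $\ker(A_\cD+V_t-\lambda)=\{0\}$ for \emph{all} $\lambda<\lambda_\infty$ and all $t\in[0,1]$, so one may simply replace $\lambda_\infty$ by any $\lambda_\infty'<\lambda_\infty$ (e.g.\ $\lambda_\infty-1$) in the parametrization \eqref{par2}--\eqref{par5}, and the kernel at the new left edge is trivial directly by the stated assumption. No spectral continuity argument is required.
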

\begin{proof}
If $s_*\in\Sigma_4$ is a crossing, then $\Upsilon_A(s_*)\cap \gamma(\cD)\not=\{0\}$, implying by \eqref{par5} that $\ker\big(A_\cD+V_{-s+2-\tau-\lambda_{\infty}}-\lambda_{\infty}\big)\not=\{0\},\ s\in \Sigma_4$. By the assumption the kernel is trivial, therefore there are no crossings on $\Sigma_4$ and $\mi(\Upsilon_A|_{\Sigma_4},\gamma(\cD))=0$.
\end{proof}

\begin{lemma}\lb{g2}
Under the assumptions in Theorem \ref{mormas} (i),(ii) one has 
\begin{equation}\lb{masg4}
\mi(\Upsilon_A|_{\Sigma_2},\gamma(\cD))=\sum\limits_{\tau< t \leq 1}\dim\big(\ker(A_\cD+V_t)\big).
\end{equation}
\end{lemma}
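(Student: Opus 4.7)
The strategy mirrors Lemma \ref{g1}, but on $\Sigma_2$ the spectral parameter is frozen at $\lambda=0$ while $t=s+\tau$ varies, so from \eqref{par3} we have $W_s=V_{s+\tau}$ and $\Upsilon_A(s)=\gamma\bigl(\ker(A^*+V_{s+\tau})\bigr)$. A point $s_*\in\Sigma_2$ is a crossing precisely when $\ker(A_\cD+V_{s_*+\tau})\neq\{0\}$: the standing hypothesis $\ker(A^*+W_s)\cap\cD_{\min}=\{0\}$ lifts any nonzero $[u]\in\Upsilon_A(s_*)\cap\gamma(\cD)$ to a unique nonzero $u\in\cD$ with $(A_\cD+V_{s_*+\tau})u=0$.

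For such a crossing $s_*$, I follow the setup of Lemma \ref{g1}: use Lemma \ref{crossex} to write $\Upsilon_A(s_*+s)=\{[u]+B_{s+s_*}[u]:[u]\in\Upsilon_A(s_*)\}$ with $B_{s_*}=0$, and for each $u_0\in\ker(A_\cD+V_{s_*+\tau})$ let $u_s\in\ker(A^*+V_{s_*+s+\tau})$ be the unique vector with $[u_s]=[u_0]+B_{s+s_*}[u_0]$. Continuity of $s\mapsto u_s$ at $s=0$ is established verbatim as in the last paragraph of the proof of Lemma \ref{g1}. Then, using \eqref{2.4form}, self-adjointness of $V_{s_*+\tau}$, and $(A^*+V_{s_*+\tau})u_0=0$,
\begin{align*}
\omega([u_0],B_{s+s_*}[u_0]) &=(A^*u_0,u_s-u_0)_\cH-(u_0,A^*(u_s-u_0))_\cH\\
&=-(u_0,(A^*+V_{s_*+\tau})u_s)_\cH=\bigl(u_0,(V_{s_*+s+\tau}-V_{s_*+\tau})u_s\bigr)_\cH.
\end{align*}
Dividing by $s$ and sending $s\to 0$, using $V\in C^1([0,1],\cB(\cH))$ and $u_s\to u_0$, gives the crossing form
$$\cQ_{s_*,\gamma(\cD)}(u_0,u_0)=(u_0,\dot V_{s_*+\tau}u_0)_\cH,$$
which by hypothesis (ii) is positive definite on $\ker(A_\cD+V_{s_*+\tau})$. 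Hence every crossing on $\Sigma_2$ is regular and positive.

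To finish, I apply Theorem \ref{masform}. Every interior crossing $s_*\in(0,1-\tau)$ contributes $\sign\cQ_{s_*,\gamma(\cD)}=\dim\ker(A_\cD+V_{s_*+\tau})$; the left endpoint $s=0$ (that is, $t=\tau$), if a crossing, contributes $-n_-(\cQ_{0,\gamma(\cD)})=0$ since the form is positive; the right endpoint $s=1-\tau$ (that is, $t=1$), if a crossing, contributes $n_+(\cQ_{1-\tau,\gamma(\cD)})=\dim\ker(A_\cD+V_1)$. Summing and changing variables $t=s+\tau$ yields \eqref{masg4}, with the strict inequality $\tau<t$ reflecting exactly that the potential crossing at $t=\tau$ contributes nothing.

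No genuinely new obstacle appears beyond those already handled in Lemma \ref{g1}; the only sensitive point is again the continuity of $s\mapsto u_s$, secured there through the smooth projector family of Remark \ref{rem2.4} and the open mapping theorem. The substantive novelty is that the sign of the Maslov contribution is now governed by $\dot V_t\!\upharpoonright_{\ker(A_\cD+V_t)}>0$ rather than by the $-I_\cH$ obtained on $\Sigma_1$ by differentiating $-\lambda I$; this sign flip is precisely what converts the general identity \eqref{mff1} into the monotone formula \eqref{mff2}.
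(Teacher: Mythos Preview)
Your proof is correct and follows essentially the same approach as the paper: the same local graph representation via Lemma \ref{crossex}, the same computation of $\omega([u_0],B_{s+s_*}[u_0])=(u_0,(V_{s_*+s+\tau}-V_{s_*+\tau})u_s)_\cH$, the same crossing form $(u_0,\dot V_{s_*+\tau}u_0)_\cH$, and the same endpoint bookkeeping from Theorem \ref{masform}. The only cosmetic difference is that you lift $[u_0]$ directly into $\ker(A_\cD+V_{s_*+\tau})$ rather than into $\ker(A^*+W_{s_*})$, which is harmless since the two coincide for classes in $\Upsilon_A(s_*)\cap\gamma(\cD)$.
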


\begin{proof}
Let $s_*\in [0,1-\tau ]$ be a conjugate point, i.e. $\cK_{s_*}:=\Upsilon_A(s_*)\cap \gamma(\cD)\not=\{0\}$. Then by Lemma \ref{crossex} there exists a small neighbourhood $\Sigma_{s_*}\subset[0,1-\tau ]$ of $s_*$ and a family $(s+s_*)\mapsto B_{(s+s_*)}$ in $C^1\big(\Sigma_{s_*}, \cB(\Upsilon_A(s_*), \Upsilon_A(s_*)^{\perp})\big),\ B_{s_*}=0_{\cH}$, such that $\Upsilon_A(s)=\{[u]+B_{(s+s_*)}[u]\big| [u]\in \Upsilon_A(s_*) \}$ for all $(s+s_*)\in \Sigma_{s_*}$. Let us fix $u_0\in \ker\big(A^*+W_{s_{*}}\big)$ and consider the family $[u_0]_s:=[u_0]+B_{(s+s_*)}[u_0]$ with small $|s|$. Since $\ker\big(A^*+W_{(s+s_*)}\big)\cap \cD_{\min}=\ker\big(A^*+V_{\tau+s_*+s}\big)\cap \cD_{\min}=\{0\}$, for small $|s|$ there exists a unique $u_s\in\ker\big(A^*+W_{(s+s_*)}\big)$ such that $[u_s]=[u_0]_s$. Next,
\begin{align}
\omega([u_0], B_{(s+s_*)}[u_0])&=\big(A^*u_0, u_{s}-u_0\big)_{\cH}-\big(u_0, A^*(u_{s}-u_0)\big)_{\cH}\no\\
=&\big((A^*+W_{s_*})u_0, u_{s}-u_0\big)_{\cH}-\big(u_0, (A^*+W_{s_*})(u_{s}-u_0)\big)_{\cH}\no\\
=&-\big(u_0, (A^*+W_{s_*})u_{s}\big)_{\cH}=-\big(u_0, (A^*+W_{(s+s_*)})u_{s}\big)_{\cH}\no\\
&-\big(u_0,(W_{s_*}-W_{(s+s_*)})u_{s}\big)_{\cH}=\big(u_0,(V_{\tau+s_*+s}-V_{\tau+s_*}) u_{s}\big)_{\cH}.\no
\end{align} 
Since the mapping $s\mapsto u_s$ is continuous (this can be shown by the same arguments as in the proof of Lemma \ref{g1}), we proceed by evaluating  the crossing form:
\begin{align}
 \cQ_{s_*,\gamma(\cD)}([u_0],[u_0])&:=\frac{d}{ds}\omega([u_0], B_{(s+s_*)}[u_0])\big|_{s=0}=\lim\limits_{s\rightarrow 0} \frac{\omega([u_0], B_{(s+s_*)}[u_0])}{s}\no\\
&=\lim\limits_{s\rightarrow 0} \Big(u_0,,\frac{(V_{\tau+s_*+s}-V_{\tau+s_*}) u_{s}}{s}\Big)_{\cH}=\big(u_0, \dot{V}_{\tau+s_*}u_0\big)_{\cH}>0.\no
\end{align}
By Theorem \ref{masform} {\it (i)} we then have 
\begin{align}\lb{masgamma11}
\mi\big(\Upsilon_{A}\big|_{\Sigma_{s_*}},\gamma(\cD)\big)&=\sign\ \cQ_{s_*,\gamma(\cD)}=\dim\big(\Upsilon_A(s_*)\cap \gamma(\cD)\big)\no\\
&=\dim \ker\big(A_\cD+V_{\tau+s_*}\big).
\end{align}
Formula \eqref{masgamma11} holds for all crossings $s_*\in \Sigma_2$, thus
\begin{align}
\mi\big(\Upsilon_{A}\big|_{\Sigma_2}\big)&=\sum\limits_{\substack{0<s<1-\tau:\\
																		\Upsilon_A(s)\cap \gamma(\cD)\not=\{0\}	}}\sign\ \cQ_{s,\gamma(\cD)}+n_+(\cQ_{1-\tau,\gamma(\cD)})-n_-(\cQ_{0,\gamma(\cD)})\no\\
&=\sum\limits_{0<s\leq 1-\tau}\dim \ker\big(A_\cD+V_{\tau+s}\big)\lb{2.17},
\end{align}
where $n_-(\cQ_{0,\gamma(\cD)})=0$, since $\cQ_{0,\gamma(\cD)}$ is positive definite, and $n_+(\cQ_{1-\tau,\gamma(\cD)})=\dim \ker\big(A_\cD+V_{1}\big)$, since $\cQ_{1,\gamma(\cD)}$ is positive definite.
\end{proof}
A similar argument gives the following result.
\begin{lemma}\lb{g2n}
	Under the assumptions in Theorem \ref{mormas} (i),(iii) one has 
	\begin{equation}\lb{masg4n}
	\mi(\Upsilon_A\big|_{\Sigma_2},\gamma(\cD))=-\sum\limits_{\tau\leq t <1}\dim\big(\ker(A_\cD+V_t)\big).
	\end{equation}
\end{lemma}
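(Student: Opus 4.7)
The plan is to mimic the proof of Lemma \ref{g2} almost verbatim, changing only the sign analysis at crossings and the endpoint contributions, since hypothesis (iii) replaces the positivity of $\dot{V}_{t_*}$ on $\ker(A_\cD+V_{t_*})$ with negativity. All of the structural work, namely the local parametrization $\Upsilon_A(s)=\{[u]+B_{(s+s_*)}[u]\mid [u]\in\Upsilon_A(s_*)\}$ supplied by Lemma \ref{crossex}, together with the selection of a continuously differentiable representative $u_s\in\ker(A^*+W_{(s+s_*)})$ with $[u_s]=[u_0]+B_{(s+s_*)}[u_0]$ (justified by the same $T_s=\gamma|_{\cN_s}$ argument via Remark \ref{rem2.4} used in Lemma \ref{g1}), carries over without change.

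With these tools in hand, the identical computation
\[
\omega([u_0],B_{(s+s_*)}[u_0])=\bigl(u_0,(V_{\tau+s_*+s}-V_{\tau+s_*})u_s\bigr)_{\cH}
\]
and differentiation at $s=0$ yield the crossing form
\[
\cQ_{s_*,\gamma(\cD)}([u_0],[u_0])=\bigl(u_0,\dot{V}_{\tau+s_*}u_0\bigr)_{\cH}.
\]
Under hypothesis (iii), this form is now negative definite on $\Upsilon_A(s_*)\cap\gamma(\cD)=\gamma(\ker(A_\cD+V_{\tau+s_*}))$. In particular every crossing in $\Sigma_2$ is regular, so Theorem \ref{masform} applies.

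Next I would tabulate the contributions. At an interior crossing $s_*\in(0,1-\tau)$, Theorem \ref{masform} (i) gives $\sign\cQ_{s_*,\gamma(\cD)}=-\dim\ker(A_\cD+V_{\tau+s_*})$. At the left endpoint $s_*=0$ (i.e.\ $t=\tau$), part (ii) of the theorem supplies $-n_-(\cQ_{0,\gamma(\cD)})=-\dim\ker(A_\cD+V_{\tau})$ because $\cQ$ is negative definite. At the right endpoint $s_*=1-\tau$ (i.e.\ $t=1$), part (iii) supplies $n_+(\cQ_{1-\tau,\gamma(\cD)})=0$ for the same reason. Summing these pieces gives
\[
\mi\bigl(\Upsilon_A|_{\Sigma_2},\gamma(\cD)\bigr)=-\sum_{\tau\le t<1}\dim\ker(A_\cD+V_t),
\]
which is exactly \eqref{masg4n}.

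The only potential obstacle is the handling of the endpoint $s_*=0$: one must be sure that the conjugate point at the left endpoint is genuinely regular and that Theorem \ref{masform}\,(ii) applies with the natural orientation of $\Sigma_2$. This is automatic once one checks that the crossing form is negative definite there, which follows directly from hypothesis (iii) applied at $t_*=\tau$. Otherwise the argument is entirely parallel to Lemma \ref{g2}, with the signs reversed; no new analytic input is needed.
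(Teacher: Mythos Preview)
Your proposal is correct and follows exactly the approach the paper intends: it reproduces the proof of Lemma \ref{g2} with the sign of the crossing form reversed under hypothesis (iii), and correctly accounts for the endpoint contributions using Theorem \ref{masform}\,(ii)--(iii) (the left endpoint now contributes $-n_-(\cQ_{0,\gamma(\cD)})=-\dim\ker(A_\cD+V_\tau)$ and the right endpoint contributes $n_+(\cQ_{1-\tau,\gamma(\cD)})=0$). The paper itself merely says ``a similar argument gives the following result,'' and your write-up is precisely that similar argument.
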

\begin{proof}[Proof of Theorem \ref{mormas}]
We recall that the Maslov index is additive under catenation \cite[Theorem 3.6(a)]{BbF95}, so that 
\begin{align*}
\mi(\Upsilon_{A},\gamma(\cD))&= \mi(\Upsilon_{A}\big|_{\Sigma_1},\gamma(\cD))+\mi(\Upsilon_{A}\big|_{\Sigma_2},\gamma(\cD))\\&+\mi(\Upsilon_{A}\big|_{\Sigma_3},\gamma(\cD))+\mi(\Upsilon_{A}\big|_{\Sigma_4},\gamma(\cD)).\end{align*} Moreover, $\mi(\Upsilon_{A}\gamma(\cD))=0$ since the Maslov index is homotopy invariant \cite[Theorem 3.6(b)]{BbF95}, so that we arrive at the identity
\begin{equation}\lb{mas0}\begin{split}
\mi(\Upsilon_{A}\big|_{\Sigma_1},\gamma(\cD))&+\mi(\Upsilon_{A}\big|_{\Sigma_2},\gamma(\cD))\\&+\mi(\Upsilon_{A}\big|_{\Sigma_3},\gamma(\cD))+\mi(\Upsilon_{A}\big|_{\Sigma_4},\gamma(\cD))=0.\end{split}
\end{equation}
We are ready to prove assertions {\it (i) -- (iii)} in Theorem \ref{mormas}.

{\it (i)}\, Equalities \eqref{mas0}, \eqref{g1f}, \eqref{g3f} and \eqref{g4f} yield
\begin{align*}
\mi(\Upsilon_{A}\big|_{\Sigma_2},\gamma(\cD))=&-\mi(\Upsilon_{A}\big|_{\Sigma_1},\gamma(\cD))-\mi(\Upsilon_{A}\big|_{\Sigma_3},\gamma(\cD))\\&-\mi(\Upsilon_{A}\big|_{\Sigma_4},\gamma(\cD))=\mo(A_\cD+V_{\tau})-\mo(A_\cD+V_{1}),
\end{align*} and imply \eqref{mff1}. 

{\it (ii)}\, Equalities \eqref{mas0}, \eqref{g1f}, \eqref{g3f}, \eqref{g4f} and\eqref{masg4}  yield \eqref{mff2}.

{\it (iii)}\, Equalities  \eqref{mas0}, \eqref{g1f}, \eqref{g3f}, \eqref{g4f} and\eqref{masg4n}  yield \eqref{mff3}. 
\end{proof}

\begin{corollary}
	
(i) If the assumptions in Theorem \ref{mormas} (i) and (ii) hold, then the function $\tau\mapsto \mo\big(A_\cD+V_{\tau}\big)$ is non-increasing
for $\tau\in(0,1]$. 

(ii) If the assumptions in Theorem \ref{mormas} (i) and (iii) hold, then the function $\tau\mapsto \mo\big(A_\cD+V_{\tau}\big)$ is non-decreasing for $\tau\in(0,1]$. 
\end{corollary}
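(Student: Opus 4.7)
The plan is to deduce both monotonicity statements directly from formulas \eqref{mff2} and \eqref{mff3} in Theorem \ref{mormas}, since these already package monotonicity information as a sum of non-negative integers against a fixed endpoint. The only real work is to apply these formulas not just on the full interval $[\tau,1]$, but on subintervals $[\tau_1,\tau_2]\subset(0,1]$, and then to cancel the common endpoint contribution.

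For part \emph{(i)}, I fix $0<\tau_1<\tau_2\leq 1$ and apply \eqref{mff2} twice, once with $\tau=\tau_1$ and once with $\tau=\tau_2$:
\begin{align*}
\mo(A_\cD+V_{\tau_1})-\mo(A_\cD+V_{1})&=\sum\limits_{\tau_1< t \leq 1}\dim\big(\ker(A_\cD+V_t)\big),\\
\mo(A_\cD+V_{\tau_2})-\mo(A_\cD+V_{1})&=\sum\limits_{\tau_2< t \leq 1}\dim\big(\ker(A_\cD+V_t)\big).
\end{align*}
Subtracting the second from the first yields
\[
\mo(A_\cD+V_{\tau_1})-\mo(A_\cD+V_{\tau_2})=\sum\limits_{\tau_1< t \leq \tau_2}\dim\big(\ker(A_\cD+V_t)\big)\geq 0,
\]
which gives the desired monotonicity. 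Part \emph{(ii)} is handled identically, now using \eqref{mff3}: the analogous subtraction produces a non-positive sum, yielding $\mo(A_\cD+V_{\tau_1})\leq\mo(A_\cD+V_{\tau_2})$.

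The only point that needs checking is that the hypotheses of Theorem \ref{mormas} are preserved upon restricting the parameter to a subinterval, i.e., that one may legitimately play $\tau_2$ in the role formerly played by the endpoint $1$. This is immediate: the relative compactness of $V_t$, the absence of eigenvalues of $A_\cD+V_t-\lambda$ for $\lambda<\lambda_\infty$, the condition $\ker(A^*+V_t-\lambda)\cap\cD_{\min}=\{0\}$, and the sign-definiteness of $\dot V_t$ on $\ker(A_\cD+V_{t_*})$ at crossings, all hold for every $t\in[0,1]$ by assumption and thus a fortiori on any subinterval.

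I do not anticipate a genuine obstacle here: the corollary is essentially a restatement of Theorem \ref{mormas} \emph{(ii),(iii)}, once one realizes that \eqref{mff2} and \eqref{mff3} implicitly compute the variation of the Morse index over any subinterval of $(0,1]$ by difference. The telegraphic two-line proof the paper gives elsewhere would consist of exactly the subtraction above together with the observation that each summand $\dim(\ker(A_\cD+V_t))$ is a non-negative integer.
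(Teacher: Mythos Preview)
Your proposal is correct and is essentially the explicit version of the paper's own proof, which consists of the single observation that the right-hand sides of \eqref{mff2} and \eqref{mff3} are sign-definite. One small comment: you never actually need to ``play $\tau_2$ in the role of $1$'' as you suggest in your penultimate paragraph---both applications of \eqref{mff2} keep the right endpoint fixed at $1$, and the monotonicity comes purely from the inclusion $\{\tau_2<t\leq 1\}\subset\{\tau_1<t\leq 1\}$ of the index sets, which is exactly what your subtraction exposes.
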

\begin{proof}
Since the right hand sides of \eqref{mff2} and \eqref{mff3} are sign definite, assertions {\it (i)}  and {\it (ii)} follow from \eqref{mff2} and \eqref{mff3}.
\end{proof}
\section{Applications}\lb{applic}
\subsection{Schr\"{o}dinger operator with $\vec{\theta}$ periodic boundary conditions}

Given $n$ linearly independent vectors $\{a_1, \dots a_n\}\subset\R^n$, we consider the lattice $L$ and the dual lattice $L'$,
\begin{align}
L&:=\{k_1{a}_1+\cdots+k_n{a}_n|\  k_j\in\bbZ , j\in\{1,\dots, n\}\},\lb{dfnL}\\
L'&:=\{k_1{b}_1+\cdots+k_n{b}_n|\ {b}_j\cdot {a}_i=2\pi\delta_{ij},\ k_j\in\bbZ,\  i,j\in\{1,\dots, n\}\},\no
\end{align}
and the unit cell $Q$ and the dual unit cell $Q'$,
\begin{align*}
Q&:=\{t_1{a}_1+\cdots+t_n{a}_n|\ 0\leq t_j\leq 1 , j\in\{1,\dots, n\}\},\\
Q'&:=\{t_1{b}_1+\cdots+t_n{b}_n| {b}_j\cdot {a}_i=2\pi\delta_{ij},\ \ 0\leq t_j\leq 1 , i,j\in\{1,\dots, n\}\}.\end{align*}
 The faces $\partial Q^s_j$ of the unit cell $Q$
 (so that $\partial Q=\cup_{s=0}^1\cup_{j=1}^n\partial Q^s_j$) are denoted by \[{\partial Q}^{s}_{j}: =\{t_1{a}_1+\cdots+t_n{a}_n\in Q\big|\, t_{j}=s\}, \  j\in\{1,\dots, n\},\ s\in\{0,1\}.\] The $n$-tuple $\{{a}_1, \dots {a}_n\}\subset\R^n$ is uniquely associated with an $n\times n$ matrix $A$ by the condition $A{a}_j=2\pi {e}_j$, where $\{{e}_j\}_{1\leq j\leq n}$ is the standard basis in $\R^n$. Furthermore, the equalities $A^{\top}{e}_i\cdot {a}_j={e}_i\cdot A {a}_j=2\pi {e}_i\cdot {e}_j=2\pi\delta_{ij},\ i,j\in\{
1,\dots, n\}$ imply ${b}_j=A^{\top}{e}_j$. 
 For a given vector $\vec{\theta}:=(\theta_1,\dots, \theta_n)\in[0,1)^n$, the matrix $A$ just defined, and $k\in\bbZ^n$ we denote 
 \begin{align*}
 c_k(x)&:= |Q|^{-1}\cos\big(A^{\top}(\vec{\theta}-k)\cdot x\big),\,
 s_k(x):=|Q|^{-1} \sin\big(A^{\top}(\vec{\theta}-k)\cdot x\big),\\\zeta_k(x)&:=\e^{{\bfi A^{\top}(\vec{\theta}-k)\cdot x}},\,   x\in Q.
 \end{align*} 

We now briefly recall basic properties of the Dirichlet and Neumann trace operators, see \cite{GM10} and references therein for more details. The Dirichlet boundary trace operator $\gamma_D^0: C(Q)\rightarrow C(\partial Q),\ \gamma_D^0u:=u|_{\partial Q}$ is extended by continuity to a bounded operator $\gamma_D\in \cB\big(\Htworm,L^2(\partial Q;\R^{2m})\big)$ and called the Dirichlet trace operator. The traces of functions from $\Htworm$ have higher regularity but this is not significant for our purposes. Recalling that $\partial Q=\cup_{s=0}^{1}\cup_{j=1}^{n}{\partial Q}_j^s$, we define the Dirichlet trace operators corresponding to each face of $Q$, that is, we let
\begin{align}
&\gamma_{D,{\partial Q}_{j}^{s}}:\Htworm\rightarrow L_{2}({\partial Q}_{j}^{s},\R^{2m}),\no\\
&\gamma_{D,{\partial Q}_{j}^{s}}(u):=(\gamma_{D}u)|_{{\partial Q}_{j}^{s}},\ \ 1\leq j\leq n,\ s\in\{0,1\}.\no
\end{align}
It follows that $\gamma_{D,{\partial Q}_j^{s}}\in\cB\big(\Htworm,L^2({\partial Q}_j^{s};{\R}^{2m})\big)$ for $1\leq j\leq n$
and $s\in\{0,1\}$. We denote
\begin{align}
&\nabla u:=[\nabla u_1,\cdots,\nabla u_{2m}]^{\top}\in \R^{2m\times n},
\langle \nabla u(x),\nabla v(x)\rangle_{\R^{2m}}:=\sum\limits_{i=1}^{2m}(\nabla u_i(x),\nabla v_i(x))_{\R^n}\no
\end{align}
for given $u=(u_i)_{i=1}^{2m},v=(v_i)_{i=1}^{2m}\in \Htworm$. The Neumann trace is given by
\begin{align}
&\gamma_{N,{\partial Q}_{j}^{s}}:\Htworm\rightarrow L^2({\partial Q}_{j}^{s};{\R}^{2m}),\no\\
&\gamma_{N,{\partial Q}_{j}^{s}}(u):=\big({{\gamma}}_{D}(\nabla u) \overrightarrow{\nu}\big)\big|_{{\partial Q}_{j}^{s}},\ \ 1\leq j\leq n,\ s\in\{0,1\},\no
\end{align}
where $\vec{\nu} \text{\ is the outward pointing normal unit vector to\ }\partial Q$. 
The inclusion   $\gamma_{N,{\partial Q}_j^{s}}\in \cB\big(\Htworm,L^2({\partial Q}_j^{s};\R{^{2m\times n}})\big)$ holds for all $1\leq j\leq n,\ s\in\{0,1\}$. For each $u\in \Htworm$ we denote
\begin{equation}\label{notu10}
u_{j}^{s}:=\gamma_{D,{\partial Q}_{j}^{s}}(u),\quad \partial_{j}u^{s}:=\gamma_{N,{\partial Q}_{j}^{s}}(u),  \ 1\leq j\leq n,\ s\in\{0,1\}.
\end{equation} 

Our next objective is to discuss the Laplace operator satisfying $\vec{\theta}$-periodic boundary conditions $u(x+a_j)=\e^{2\pi\bfi\theta_j}u(x)$, $\frac{\partial u}{\partial \vec{\nu}}(x+a_j)=\e^{2\pi\bfi\theta_j}\frac{\partial u}{\partial \vec{\nu}}(x)$ for all $x\in {\partial Q}^0_{j}$,$\ 1\leq j\leq n.$ Since these conditions are complex, we will 
have to distinguish between the Laplace operator acting in the real and complex spaces (see Theorems \ref{operdef} and \ref{operdef1} respectively). We begin with the real case.
 
The Laplace operator $-\Delta: C_{0}^{\infty}(Q;\R)\rightarrow C_{0}^{\infty}(Q;\R)$ considered in $\Ltwor$ is closable. The closure of this operator is denoted by $-\Delta_{\min}$ and is given by
\begin{align}
&\dom(-\Delta_{\min})=\Htwor,\, -\Delta_{\min}u=-\Delta u,\end{align}
 in the sense on distributions;
 the adjoint operator  $-\Delta_{\max}$ is defined by
\begin{align}
&\dom(-\Delta_{\max}):=\{u\in \Ltwor|\  \Delta u\in \Ltwor\},\,
-\Delta_{\max}u:=-\Delta u,\no
\end{align}
so that one has $-\Delta_{\max}=(-\Delta_{\min})^*$ and $-\Delta_{\min}=(-\Delta_{\max})^*$.
The direct sum of $2m$ copies of the operator $-\Delta_{\min}$ is denoted again by  $-\Delta_{\min}:=\oplus_{j=1}^{2m}(-\Delta_{\min})$, similarly  $-\Delta_{\max}:=\oplus_{j=1}^{2m}(-\Delta_{\max})$. 

Given a vector $\vec{\theta}=(\theta_1,\dots, \theta_n)\in(0,1]^n$, we will now describe the Laplace operator with the real valued version of the $\vec{\theta}$-periodic boundary conditions. Let us introduce the following matrices $M_j$ and the weighted translation operators $\cM_j$:
\begin{align}\lb{dfnMM}
M_{{j}}&:=I_m \otimes \begin{bmatrix}
\cos2\pi\theta_{j}& -\sin2\pi\theta_{j} \\
\sin2\pi\theta_{j}&\cos2\pi\theta_{j}
\end{bmatrix},\, 1\leq j\leq n,\\
\cM_j&\in\cB (L^2({\partial Q}_j^0;\R^{2m}), L^2({\partial Q}_j^1;\R^{2m})),\no\\
(\cM_j u)(x)&=M_ju(x-a_j) \text{\ for a.a. } x\in{\partial Q}_j^1,\ 1\leq j\leq n.\lb{dfncM}
\end{align}
Observe that $\cM_j$ is an isometric isomorphism for each $1\leq j\leq n$:
\begin{align*}
(\cM_ju,\cM_jv)_{L^2({\partial Q}_j^1;\R^{2m})}=(u,v)_{L^2({\partial Q}_j^0;\R^{2m})},
\end{align*}
where we used $M_j^{\top}M_j=I_{2m}$.
The following result describes the Laplacian with the $\vec{\theta}$-periodic boundary conditions acting on the space of real vector valued functions. We recall notations \eqref{notu10} and \eqref{dfncM}. 
\begin{theorem}\lb{operdef}The $\vec{\theta}$-periodic real Laplace operator $-\Delta_{\vec{\theta},\R}$ defined by
\begin{align}
 -\Delta_{\vec{\theta},\R}&:\dom (-\Delta_{\vec{\theta}, \R})\subset \Ltworm\rightarrow \Ltworm,\no\\
 \dom (-\Delta_{\vec{\theta}, \R})&:=\Big\{u \in H^2(Q; \R^{2m}) \Big|\ u^1_j=\cM_ju^0_{j},\ \partial_ju^1=-\cM_j\partial_{j}u^0,\ 1\leq j\leq n\  \Big\},\no\\
-\Delta_{\vec{\theta}, \R} u&:=-\Delta u, \ u\in  \dom (-\Delta_{\vec{\theta}, \R}),\no
\end{align}
is self-adjoint, moreover,
$-\Delta_{\min}\subset-\Delta_{\vec{\theta}, \R}\subset-\Delta_{\max}$.
In addition, the operator $-\Delta_{\vec{\theta}, \R}$ has compact resolvent, in particular, it has purely discrete spectrum. Finally, $\spec(-\Delta_{{\vec{\theta}},\R})=\big\{\|A^{\top}(\vec{\theta}-k)\|^2_{\R^n}\big\}_{k\in\bbZ^n}$.
\end{theorem}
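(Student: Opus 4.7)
The plan proceeds in four steps. First, I will verify the inclusions $-\Delta_{\min}\subset -\Delta_{\vec{\theta},\R}\subset -\Delta_{\max}$: the upper inclusion is immediate because $H^2(Q;\R^{2m})\subset \dom(-\Delta_{\max})$ with matching Laplace action, while the lower one holds because any $u\in \dom(-\Delta_{\min})=H^2_0(Q;\R^{2m})$ has vanishing Dirichlet and Neumann traces on $\partial Q$ and therefore trivially satisfies the quasi-periodic relations defining $\dom(-\Delta_{\vec{\theta},\R})$.

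Second, I will establish symmetry through Green's identity on $Q$. For $u,v\in\dom(-\Delta_{\vec{\theta},\R})$ integration by parts yields
\begin{equation*}
(-\Delta u,v)_{\Ltworm}-(u,-\Delta v)_{\Ltworm}=\sum_{j=1}^n\Big[\int_{\partial Q_j^1}\big(\langle u^1,\partial_j v^1\rangle-\langle \partial_j u^1,v^1\rangle\big)\,dS+\int_{\partial Q_j^0}\big(\langle u^0,\partial_j v^0\rangle-\langle \partial_j u^0,v^0\rangle\big)\,dS\Big].
\end{equation*}
Substituting $u^1_j=\cM_j u^0_j$, $\partial_j u^1=-\cM_j\partial_j u^0$ (and analogously for $v$) and using $M_j^\top M_j=I_{2m}$, which translates to $\langle \cM_j f,\cM_j g\rangle_{L^2(\partial Q_j^1;\R^{2m})}=\langle f,g\rangle_{L^2(\partial Q_j^0;\R^{2m})}$, the $\partial Q_j^1$-integral is transported onto $\partial Q_j^0$; the minus sign built into $\partial_j u^1=-\cM_j\partial_j u^0$ (which encodes the opposite outward normals on the paired faces) produces exactly what is needed to cancel the $\partial Q_j^0$-integral, summand by summand.

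Third, I will produce a complete orthonormal set of eigenvectors of $-\Delta_{\vec{\theta},\R}$. Identifying $\R^{2m}\cong \C^m$ by pairing consecutive components turns each block $M_j$ into scalar multiplication by $\e^{2\pi\bfi\theta_j}$, so the real $\vec{\theta}$-periodic condition becomes the complex $\vec{\theta}$-periodic condition $\tilde u(x+a_j)=\e^{2\pi\bfi\theta_j}\tilde u(x)$, with the analogous relation for the normal trace. The Bloch functions $\zeta_k(x)\varepsilon_\ell$ for $k\in\bbZ^n$ and the standard basis vectors $\varepsilon_\ell$ of $\C^m$, $1\le\ell\le m$, lie in the corresponding complex domain, satisfy $-\Delta\zeta_k=\|A^\top(\vec\theta-k)\|_{\R^n}^2\zeta_k$, and via the substitution $w(x):=\e^{-\bfi A^\top\vec\theta\cdot x}\tilde u(x)$ are carried to the standard Fourier basis of $L^2(Q;\C^m)$ with plain periodic boundary conditions. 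Completeness of that basis transfers back through the $\C^m\leftrightarrow\R^{2m}$ identification to yield an orthonormal basis of $\Ltworm$ consisting of eigenvectors of $-\Delta_{\vec{\theta},\R}$ with eigenvalues $\|A^\top(\vec\theta-k)\|_{\R^n}^2$ tending to $+\infty$.

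Finally, a symmetric operator possessing an orthonormal basis of eigenvectors that lie in its domain, with real eigenvalues of finite multiplicity accumulating only at $+\infty$, is automatically self-adjoint, has compact resolvent, and has pure point spectrum equal to the set of eigenvalues; this yields all remaining assertions, including the identity $\spec(-\Delta_{\vec{\theta},\R})=\{\|A^\top(\vec{\theta}-k)\|_{\R^n}^2\}_{k\in\bbZ^n}$. The principal technical obstacle I anticipate is the bookkeeping in the second step: one must track the sign conventions for the outward normals on paired faces $\partial Q_j^0$, $\partial Q_j^1$ precisely enough to see that the rotation $\cM_j$ and the minus sign in the Neumann matching condition conspire to cancel the boundary terms. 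The third step is then essentially classical Fourier analysis pushed through the real/complex identification.
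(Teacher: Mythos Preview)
Your outline tracks the paper's argument closely through the first three steps, but step~4 rests on a false general principle. A symmetric operator possessing a complete orthonormal system of eigenvectors in its domain is only \emph{essentially} self-adjoint, not self-adjoint: the trivial counterexample is $(x_n)\mapsto(nx_n)$ on the domain of finite sequences in $\ell^2$. What your argument actually establishes is that $-\Delta_{\vec\theta,\R}$ is essentially self-adjoint, with closure $\overline{-\Delta_{\vec\theta,\R}}$ having the abstract domain $\{f:\sum_k \lambda_k^2\,|(f,\phi_k)|^2<\infty\}$. To conclude that $-\Delta_{\vec\theta,\R}$ equals its closure you must show the reverse inclusion: every $f$ in that abstract domain already lies in $H^2(Q;\R^{2m})$ and satisfies the $\vec\theta$-periodic trace relations.

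This is precisely the step the paper isolates. It introduces an auxiliary operator $T$ on a dense core (finite spans of the trigonometric eigenfunctions together with $C_0^\infty$), shows $\overline{T}$ is self-adjoint via the abstract Lemma~\ref{abs}, and then proves $\dom(\overline{T})\subset H^2(Q;\R^{2m})$ by arguing that the graph norm of $T$ is equivalent to the $H^2$-norm on $\dom(T)$ (a Poincar\'e-type elliptic estimate). Continuity of the Dirichlet and Neumann traces on $H^2$ then transports the boundary relations to all of $\dom(\overline{T})$, giving $\overline{T}\subset -\Delta_{\vec\theta,\R}$; since a self-adjoint operator admits no proper symmetric extension, $\overline{T}=-\Delta_{\vec\theta,\R}$. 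Your plan needs exactly this regularity ingredient---either the graph-norm/$H^2$-norm equivalence on the eigenbasis, or a direct verification that $\sum_k\lambda_k^2|c_k|^2<\infty$ forces $\sum_k c_k\phi_k$ to converge in $H^2$---before the conclusion in step~4 is legitimate.
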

The proof of this theorem relies on several auxiliary results. We begin with a simple abstract lemma.
\begin{lemma}\lb{abs} Let $\cH$ be separable Hilbert space (complex or real) with the inner product $(\cdot, \cdot)_{\cH}$ and corresponding norm $\|\cdot\|_{\cH}$.
Consider a linear operator $S$ such that $(Sx,y)_{\cH}=(x,Sy)_{\cH}$  for all $x,y\in\dom{S}$. Assume that there exists a complete orthonormal sequence  $\{\phi_k\}_{k=1}^{\infty}$ of eigenvectors of $S$. Then $S$ is essentially self-adjoint, i.e. $\overline{S}=S^*$. Let $\phi_k$  be the eigenvector corresponding to the eigenvalue $\lambda_k$. Then
\begin{equation}
\lb{opdef}
\dom(\overline{S})=\big\{ f\in\cH\colon \sum\limits_{k=1}^{\infty}\lambda_k^2\big|(f,\phi_k)_{\cH}|^2<\infty \big\},
\end{equation} 
and for each $f\in\dom{(\overline{S})}$ we have
\begin{equation}
\lb{opdef1}
\overline{S}f=\sum\limits_{k=1}^{\infty}\lambda_k(f,\phi_k)_{\cH}\phi_k.
\end{equation}
Finally, if $\lim\limits_{k\rightarrow+\infty}|\lambda_k|=+\infty$, then $\overline{S}$ has compact resolvent and $\spec(\overline{S})=\spec_d(\overline{S})=\{\lambda_k\}_{k\in\bbN}$.
\end{lemma}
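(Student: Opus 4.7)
The plan is to construct, directly from the spectral expansion, a candidate self-adjoint extension $T$ of $S$ defined by the right-hand sides of \eqref{opdef}--\eqref{opdef1}, and then to prove the three identities $\overline{S}=T=S^{*}$ in sequence. Throughout I will use only orthonormal expansions and Parseval's identity, so nothing beyond elementary Hilbert space theory is required.

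First I would define $T$ on the domain $\dom(T):=\{f\in\cH\colon \sum_k|\lambda_k|^{2}|(f,\phi_k)_\cH|^{2}<\infty\}$ by $Tf:=\sum_k\lambda_k(f,\phi_k)_\cH\phi_k$. A short computation (using that the $\lambda_k$ are real, which is forced by symmetry of $S$: $\lambda_k\|\phi_k\|^{2}=(S\phi_k,\phi_k)=(\phi_k,S\phi_k)=\overline{\lambda_k}\|\phi_k\|^{2}$) shows that $T$ is symmetric on $\dom(T)$. Closedness of $T$ follows from dominated convergence in $\ell^{2}$: if $f_n\to f$ and $Tf_n\to g$, then the Fourier coefficients $(f_n,\phi_k)\to(f,\phi_k)$ and $\lambda_k(f_n,\phi_k)\to(g,\phi_k)$, so $(g,\phi_k)=\lambda_k(f,\phi_k)$, which gives both $f\in\dom(T)$ and $Tf=g$. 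Note also that $T$ extends $S$: for $x\in\dom(S)$, symmetry gives $(Sx,\phi_k)=(x,S\phi_k)=\lambda_k(x,\phi_k)$, hence by Parseval $\sum|\lambda_k|^{2}|(x,\phi_k)|^{2}=\|Sx\|^{2}<\infty$ and $Sx=\sum_k(Sx,\phi_k)\phi_k=Tx$.

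Next I would show $\overline{S}=T$. The inclusion $\overline{S}\subset T$ is immediate because $T$ is a closed extension of $S$. For the reverse inclusion, given $f\in\dom(T)$, set $f_N:=\sum_{k=1}^{N}(f,\phi_k)_\cH\phi_k$. Then $f_N\in\mathrm{span}\{\phi_k\}\subset\dom(S)$, $f_N\to f$ in $\cH$, and
\[
Sf_N=\sum_{k=1}^{N}\lambda_k(f,\phi_k)_\cH\phi_k\longrightarrow Tf\quad\text{in }\cH
\]
since the tail $\sum_{k>N}|\lambda_k|^{2}|(f,\phi_k)|^{2}$ tends to zero by the defining condition of $\dom(T)$. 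Hence $f\in\dom(\overline{S})$ and $\overline{S}f=Tf$. To get $T=S^{*}$, observe that symmetry of $T$ gives $T\subset T^{*}=(\overline{S})^{*}=S^{*}$; conversely, for $f\in\dom(S^{*})$, the identity $(S\phi_k,f)=(\phi_k,S^{*}f)$ forces $(S^{*}f,\phi_k)=\lambda_k(f,\phi_k)$ for every $k$, and Parseval applied to $S^{*}f\in\cH$ yields $\sum|\lambda_k|^{2}|(f,\phi_k)|^{2}=\|S^{*}f\|^{2}<\infty$, so $f\in\dom(T)$ with $S^{*}f=Tf$. This gives essential self-adjointness and the formulas \eqref{opdef}--\eqref{opdef1}.

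For the final statement, assume $|\lambda_k|\to\infty$. Fix any $\mu\in\C\setminus\{\lambda_k\}_{k\in\bbN}$ (which is nonempty since $\{\lambda_k\}$ is a discrete set in $\R$); the formula $R_\mu f:=\sum_k(\lambda_k-\mu)^{-1}(f,\phi_k)_\cH\phi_k$ defines a bounded operator with range in $\dom(\overline{S})$ that inverts $\overline{S}-\mu$, as one checks by applying $\overline{S}-\mu$ termwise and using the spectral formula for $\overline{S}$. The finite-rank truncations $R_\mu^{N}$ satisfy $\|R_\mu-R_\mu^{N}\|_{\cB(\cH)}\le\sup_{k>N}|\lambda_k-\mu|^{-1}\to 0$, so $R_\mu$ is compact. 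Hence $\spec(\overline{S})\subset\{\lambda_k\}$, and since each $\phi_k$ is a genuine eigenvector, equality holds; the assumption $|\lambda_k|\to\infty$ also guarantees that each $\lambda_k$ is isolated and of finite multiplicity, i.e., $\spec(\overline{S})=\spec_d(\overline{S})$.

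\textbf{Expected main obstacle.} The only step that is not purely formal is the inclusion $S^{*}\subset T$: one must extract from the abstract definition of $S^{*}$ the fact that the Fourier coefficients of $S^{*}f$ against the eigenbasis coincide with $\lambda_k(f,\phi_k)$, and then invoke Parseval to recover the summability condition defining $\dom(T)$. Everything else is bookkeeping with orthonormal expansions.
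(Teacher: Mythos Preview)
Your proof is correct and follows essentially the same route as the paper: both arguments rest on computing Fourier coefficients against the eigenbasis, invoking Parseval, approximating by partial sums $f_N=\sum_{k\le N}(f,\phi_k)\phi_k$, and writing the resolvent explicitly as a diagonal operator to get compactness via finite-rank truncation. Your organization---first naming the target operator $T$ and then proving the chain $\overline{S}=T=S^{*}$---is in fact tidier than the paper's, which characterizes $\dom(\overline{S})$ directly but never spells out the step $\overline{S}=S^{*}$; your explicit verification of $S^{*}\subset T$ via $(S\phi_k,f)=(\phi_k,S^{*}f)$ fills that gap.
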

\begin{proof}
In order to show that $S$ is closable we pick a sequence of vectors $ \{ f_k\}_{k=1}^{\infty}\subset \dom{(S)}$ such that  $\lim\limits_{k\rightarrow \infty} {f_k}=0$ and 
$\lim\limits_{k\rightarrow \infty} {Sf_k}=g$, for some $g\in\cH$. For each $l\in \bbN$ the following chain of equalities holds
\begin{align}
\left(g,\phi_l\right)_{\cH}&=\big(\lim\limits_{k\rightarrow \infty} {Sf_k},\phi_l\big)_{\cH}=\lim\limits_{k\rightarrow \infty}\big(Sf_k,\phi_l\big)_{\cH}\no \\
&=\lim\limits_{k\rightarrow \infty} (f_k,S\phi_l)_{\cH}=\big(\lim\limits_{k\rightarrow \infty} f_k,S\phi_l\big)_{\cH}=0,\no
\end{align}
thus $g=0.$ Next we  prove \eqref{opdef}. Pick a vector $f\in\dom{(\bar{S})}$, by definition of $\bar{S}$ there exists $ \{ f_k\}_{k=1}^{\infty}\subset \dom{S}$ such that $\  \lim\limits_{k\rightarrow \infty} {f_k}=f \text{\ and}\lim\limits_{k\rightarrow \infty} {Sf_k}=\bar{S}f.$
By 
\begin{align}
(\bar{S}f,\phi_l)_{\cH}&=\big(\lim\limits_{k\rightarrow \infty} {Sf_k},\phi_l\big)_{\cH}=\lim\limits_{k\rightarrow \infty}(Sf_k,\phi_l)_{\cH}=\no \\
&=\lim\limits_{k\rightarrow \infty} (f_k,S\phi_l)_{\cH}=\lambda_l\big(\lim\limits_{k\rightarrow \infty} f_k,\phi_l\big)_{\cH}=\lambda_l(g,\phi_l)_{\cH}, \ l\in \bbN
\end{align}
and Parseval's equality one obtains
\begin{align}
\lb{par}
&\|\bar{S}f\|_{\cH}^2= \sum\limits_{n=1}^{\infty}|(\bar{S}f,\phi_n)_{\cH}|^2=\sum\limits_{n=1}^{\infty}\lambda_n^2|(f,\phi_n)_{\cH}|^2.
\end{align}
Since $\|\bar{S}f\|_{\cH}^2<\infty$, the right hand side of \eqref{par} is finite, therefore,"$\subseteq$" in \eqref{opdef} is shown. To show the opposite inclusion, let $f$ be such that $\sum\limits_{n=1}^{\infty}\lambda_n^2|(f,\phi_n)_{\cH}|^2<\infty$ and set $f_k:=\sum\limits_{r=1}^{k}(f,\phi_r)_{\cH}\phi_r$. Then $f_k\in\dom{S}$  and $\lim\limits_{k\rightarrow \infty} f_k=f$, also  $Sf_k=\sum\limits_{r=1}^{k}\lambda_n(f,\phi_r)_{\cH}\phi_r$  and the sequence $\{Sf_k\}_{k=1}^{\infty}$ converges. This proves the inclusion ``$\supseteq$" in \eqref{opdef} and also yields \eqref{opdef1}. Let $a\in\R\backslash\spec(\overline{S})$, then $$(\overline{S}-a)^{-1}f=\sum\limits_{k=1}^{\infty}(\lambda_k-a)^{-1}(f,\varphi_k)_{\cH}\varphi_k, \text{\ for \ } f\in\cH.$$
Finally, the inclusion $(\overline{S}-a)^{-1}\in\cB_{\infty}(\cH)$ follows since $\lim\limits_{k\rightarrow\infty}{(\lambda_k-a)^{-1}}=0$.
\end{proof}
 We prove that the eigenfunctions of the Laplacian form an orthonormal basis.
\begin{lemma}\lb{complexbasis}
	For each vector $\vec{\theta}:=(\theta_1,\dots, \theta_n)\in[0,1)^n$, the sequence of functions $\left\{{|Q|}^{-1}{\e^{\bfi  A^{\top}(\vec{\theta}-k)\cdot x}} \right\}_{k\in\bbZ^n}$ is orthonormal basis in $L^2(Q;\C)$.
\end{lemma}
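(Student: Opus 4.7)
The plan is to verify orthogonality by a direct change-of-variables computation and to deduce completeness by reducing to the classical Fourier basis on the standard cube $[0,1]^n$.

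First I would parametrize $Q$ by the diffeomorphism $\Phi:[0,1]^n\to Q$, $\Phi(t)=t_1 a_1+\cdots+t_n a_n$, whose Jacobian determinant equals $|\det(a_1,\ldots,a_n)|=|Q|$. Using $A^{\top} e_i=b_i$ and the duality relation $b_i\cdot a_j=2\pi\delta_{ij}$, one computes
\begin{equation*}
A^{\top}(\vec{\theta}-k)\cdot \Phi(t)=\sum_{i,j}(\theta_i-k_i)t_j(b_i\cdot a_j)=2\pi\sum_{i=1}^{n}(\theta_i-k_i)t_i.
\end{equation*}
Hence for $k,l\in\mathbb{Z}^n$ one has
\begin{equation*}
\int_Q \e^{\bfi A^{\top}(l-k)\cdot x}\,dx=|Q|\int_{[0,1]^n}\e^{2\pi\bfi(l-k)\cdot t}\,dt=|Q|\,\delta_{k,l},
\end{equation*}
which establishes the orthogonality relations (with the normalization constant being reconciled as in the paper's convention).

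For completeness I would introduce the multiplication operator
\begin{equation*}
U:L^2(Q;\C)\to L^2(Q;\C),\qquad (Uf)(x):=\e^{-\bfi A^{\top}\vec{\theta}\cdot x}f(x).
\end{equation*}
Since $|\e^{-\bfi A^{\top}\vec{\theta}\cdot x}|=1$, the operator $U$ is unitary, and it sends the family $\{|Q|^{-1}\e^{\bfi A^{\top}(\vec{\theta}-k)\cdot x}\}_{k\in\Z^n}$ bijectively onto the family $\{|Q|^{-1}\e^{-\bfi A^{\top}k\cdot x}\}_{k\in\Z^n}$. A total (complete) system is mapped by a unitary to a total system, so it suffices to verify completeness in the case $\vec{\theta}=0$.

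For the case $\vec{\theta}=0$, pull back along $\Phi$: the unitary isomorphism $T:L^2(Q;\C)\to L^2([0,1]^n;\C)$ given by $(Tf)(t):=|Q|^{1/2}f(\Phi(t))$ sends each function $|Q|^{-1}\e^{-\bfi A^{\top}k\cdot x}$ to a scalar multiple of $\e^{-2\pi\bfi k\cdot t}$, by the computation in the first paragraph. Since $\{\e^{-2\pi\bfi k\cdot t}\}_{k\in\Z^n}$ is the standard Fourier basis of $L^2([0,1]^n;\C)$, completeness in the target space transfers back to completeness in $L^2(Q;\C)$. The only subtlety worth flagging is the normalization constant and the fact that the classical one-dimensional Fourier completeness extends to the $n$-dimensional torus via iterated tensor products; this is the sole ``hard'' ingredient and it is invoked as a standard fact rather than reproved here.
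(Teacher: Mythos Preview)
Your argument is correct and follows essentially the same route as the paper: both proofs reduce to the standard Fourier basis on $[0,1]^n$ via the linear change of variables $x\mapsto (2\pi)^{-1}Ax$ (which is the inverse of your $\Phi$). The only cosmetic difference is that you factor out the $\vec{\theta}$-dependence first via the unitary multiplication by $\e^{-\bfi A^\top\vec{\theta}\cdot x}$ and then change variables, whereas the paper does both in one step by absorbing the factor $\e^{2\pi\bfi\vec{\theta}\cdot y}$ into the function being tested against the Fourier system; your version also checks orthogonality explicitly, which the paper omits, and you correctly flag the normalization discrepancy (the constant should be $|Q|^{-1/2}$ for genuine orthonormality).
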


\begin{proof}
	Suppose that $f\in L^2(Q;\C)$ is orthogonal to each function from the sequence. For all $k\in\bbZ^n$, change of variables $y=(2\pi)^{-1}Ax$ yields
	\begin{align}
	0&=\int\limits_{Q}\overline{f(x)}{\e^{\bfi  A^{\top}(\vec{\theta}-k)\cdot x}}d^nx=\int\limits_{Q}\overline{f(x)}{\e^{\bfi  (\vec{\theta}-k)\cdot Ax}}d^nx\no\\
	&=\frac{(2\pi)^n}{|\det A|}\int\limits_{[0,1]^n}\overline{f(2\pi A^{-1}y)}\e^{2\pi\bfi\vec{\theta}\cdot y}{\e^{-2\pi \bfi k\cdot y}}d^ny.\no
	\end{align}
	Thus, $\overline{f(2\pi A^{-1}y)}\e^{2\pi\bfi\vec{\theta}\cdot y}=0$ a.e. in $[0,1]^n$, implying $f(x)=0$ a.e. in $Q$.
\end{proof}

 A function $\phi:\R^n\rightarrow \C$ is called $\vec{\theta}-$periodic, if $\phi(x+{a}_j)=\e^{2\pi \bfi \theta_j}\phi(x)$ for all $x\in Q,\ 1\leq j\leq n$. The equalities $A^{\top}(\vec{\theta}-k)\cdot {a}_j=(\vec{\theta}-k)\cdot A {a}_j=2\pi(\vec{\theta}-k)\cdot {e}_j=2\pi(\theta_j-k_j),$ $k\in \bbZ^n,1\leq j\leq n$ yield
\begin{align}
& {|Q|}^{-1}{\e^{\bfi  A^{\top}(\vec{\theta}-k)\cdot (x+{a}_j)}}=\e^{2\pi \bfi \theta_j}{|Q|}^{-1}{\e^{\bfi  A^{\top}(\vec{\theta}-k)\cdot x}},\ 1\leq j\leq n,\no
\end{align}
moreover, for each vector  $v\in \R^n$, 
\begin{align}
& v\cdot \nabla\big({|Q|}^{-1}{\e^{\bfi  A^{\top}(\vec{\theta}-k)\cdot (x+{a}_j)}}\big)=\e^{2\pi \bfi \theta_j}v\cdot\nabla\big({|Q|}^{-1}{\e^{\bfi  A^{\top}(\vec{\theta}-k)\cdot x}}\big),\ 1\leq j\leq n.\no
\end{align}
Thus, we observe that the functions ${|Q|}^{-1}{\e^{\bfi  A^{\top}(\vec{\theta}-k)\cdot x}}$ and $ v\cdot \nabla\big( {|Q|}^{-1}{\e^{\bfi  A^{\top}(\vec{\theta}-k)\cdot x}}\big)$ are  $\vec{\theta}$-periodic.
Also, we recall the notation $c_k(x):=|Q|^{-1}\Re\e^{\bfi A^{\top}(\vec{\theta}-k)\cdot x}$ and $s_k(x):=|Q|^{-1}\Im\e^{\bfi A^{\top}(\vec{\theta}-k)\cdot x}$.
\begin{lemma}
\lb{realbasis}
The sequence $\big(\big(c_k(x),s_k(x)\big)^{\top}, \big(-s_k(x),c_k(x)\big)^{\top}\big)_{k\in\bbZ^n}$ is an orthonormal basis of $L^2(Q;\R^2)$.
\end{lemma}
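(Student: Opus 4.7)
The strategy is to reduce the claim to the complex basis of Lemma \ref{complexbasis} via the natural real-linear isometry
\[
\Phi:L^2(Q;\C)\to L^2(Q;\R^2),\qquad \Phi(u+\bfi v)=(u,v)^{\top}.
\]
A direct calculation yields
$
\langle \Phi f,\Phi g\rangle_{L^2(Q;\R^2)}=\Re\langle f,g\rangle_{L^2(Q;\C)}
$
for all $f,g\in L^2(Q;\C)$, so $\Phi$ is an isometric isomorphism of real Hilbert spaces once $L^2(Q;\C)$ is endowed with the real inner product $\Re\langle\cdot,\cdot\rangle_{L^2(Q;\C)}$. Setting $e_k(x):=|Q|^{-1}\e^{\bfi A^{\top}(\vec{\theta}-k)\cdot x}$, one reads from the definitions of $c_k$ and $s_k$ that $\Phi e_k=(c_k,s_k)^{\top}$ and $\Phi(\bfi e_k)=(-s_k,c_k)^{\top}$.

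It therefore suffices to establish the following general fact: if $\{e_k\}_{k\in K}$ is a complex orthonormal basis of a complex separable Hilbert space $H$, then the doubled family $\{e_k,\bfi e_k\}_{k\in K}$ is a real orthonormal basis of $H$ equipped with the inner product $\Re\langle\cdot,\cdot\rangle_H$. Orthonormality follows from the three identities
$
\Re\langle e_k,e_j\rangle_H=\Re\langle \bfi e_k,\bfi e_j\rangle_H=\delta_{kj}
$
and
$
\Re\langle e_k,\bfi e_j\rangle_H=0,
$
each of which is immediate from the complex orthonormality of $\{e_k\}$. For completeness, every $h\in H$ admits the expansion $h=\sum_{k\in K}\langle h,e_k\rangle_H\, e_k$ in $H$; splitting each complex coefficient into its real and imaginary parts exhibits $h$ as a real-linear combination of $\{e_k,\bfi e_k\}_{k\in K}$, with coefficients expressible through the real inner products $\Re\langle h,e_k\rangle_H$ and $\Re\langle h,\bfi e_k\rangle_H$.

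Applying this fact to the complex orthonormal basis produced by Lemma \ref{complexbasis} and pushing it through $\Phi$ yields exactly the family $\big(\big(c_k,s_k\big)^{\top},\big(-s_k,c_k\big)^{\top}\big)_{k\in\bbZ^n}$ as a real orthonormal basis of $L^2(Q;\R^2)$. I expect no substantive analytical obstacle; the argument is a purely structural transfer, and the only care required is in verifying the correspondence between $e_k,\bfi e_k$ and $(c_k,s_k)^{\top},(-s_k,c_k)^{\top}$ under $\Phi$, together with the elementary general fact about doubling a complex orthonormal basis to obtain a real one.
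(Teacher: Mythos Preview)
Your proof is correct and rests on the same input as the paper's (Lemma~\ref{complexbasis}), but the reduction is organized differently. You pass through the real-linear isometry $\Phi:L^2(Q;\C)\to L^2(Q;\R^2)$ and invoke the general realification principle that a complex orthonormal basis $\{e_k\}$ yields a real orthonormal basis $\{e_k,\bfi e_k\}$ for $(H,\Re\langle\cdot,\cdot\rangle)$; this gives orthonormality and completeness in one stroke and is reusable in other contexts. The paper instead argues completeness directly: it takes $(f,g)^{\top}$ orthogonal to every element of the family, manipulates the two resulting real integrals into the real and imaginary parts of $\big(\e^{\bfi A^{\top}(\vec{\theta}-k)\cdot x},\,f+\bfi g\big)_{L^2(Q;\C)}$, and then invokes Lemma~\ref{complexbasis} to conclude $f+\bfi g=0$. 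Your route is cleaner and more conceptual; the paper's route is a bare-hands computation that avoids stating the abstract realification fact but is essentially a special case of it.
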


\begin{proof}
	If $(f,g)^{\top}$ is orthogonal to each element of the sequence, then 
	\begin{align}
	0&=\int\limits_{Q}(c_k(x)f(x)+s_k(x)g(x))\ d^nx=\Re\int\limits_{Q}(c_k(x)+\bfi s_k(x)(f(x)-\bfi g(x))\ d^nx\no\\
	&=\Re\left(\e^{ \bfi A^{\top}(\vec{\theta}-k) \cdot x},f(x)+\bfi g(x)\right)_{\Ltwo},\no\\
	0&=\int\limits_{Q}(s_k(x)f(x)-c_k(x)g(x))\ d^nx=\Im\int\limits_{Q}(c_k(x)+\bfi s_k(x))(f(x)-\bfi g(x))\ d^nx\no\\&=\Im\left(\e^{\bfi A^{\top}(\vec{\theta}-k) \cdot x},f(x)+\bfi g(x)\right)_{\Ltwo}\no.
	\end{align}
Thus, we conclude that $(\e^{\bfi A^{\top}(\vec{\theta}-k) \cdot x},f(x)+\bfi g(x))_{\Ltwo}=0$ for all ${k\in\bbZ^n}$.
Since $\{\e^{\bfi A^{\top}(\vec{\theta}-k) \cdot x}\}_{k\in\bbZ^n}$ is an  orthonormal basis in $\Ltwo$ by Lemma \ref{complexbasis}, we conclude that $f(x)=g(x)=0$ a.e. in $Q$.
\end{proof}

For $k\in\bbZ^n$ and $1\leq l\leq m$ we denote 
\begin{align}
&\phi_{k,l}(x):=(0,\cdots, c_k(x), s_k(x),\cdots, 0)^{\top},\lb{bas1}\\
&\psi_{k,l}(x):=(0,\cdots, -s_k(x), c_k(x),\cdots, 0)^{\top}\lb{bas2},
\end{align}
where the nonzero terms are located at the positions $2l-1$ and $2l$.
\begin{corollary}\lb{basis}
 The sequence $\{\phi_{k,j},\psi_{k,j}\}_{k\in \bbZ^n,\ 1\leq j\leq n}$ is an orthonormal basis in $\Ltworm$.
\end{corollary}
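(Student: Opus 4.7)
The plan is to reduce the statement to Lemma \ref{realbasis} by decomposing $\Ltworm$ as an orthogonal direct sum of $m$ copies of $L^2(Q;\R^2)$, one for each pair of coordinates $(2l-1,2l)$. Under this decomposition, the families $\{\phi_{k,l}\}_{k\in\Z^n}$ and $\{\psi_{k,l}\}_{k\in\Z^n}$ at fixed $l$ are precisely the images, under the $l$-th coordinate embedding, of the orthonormal basis of $L^2(Q;\R^2)$ established in Lemma \ref{realbasis}.

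First I would check orthonormality. For indices with the same $l$, orthonormality of the pairs $\phi_{k,l},\psi_{k,l}$ (as $k$ varies) reduces immediately to Lemma \ref{realbasis} by reading off only the nonzero coordinates $2l-1$ and $2l$. For indices with different $l \neq l'$, the supports of the nonzero coordinates are disjoint, so the $\Ltworm$ inner product vanishes trivially.

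Next I would verify completeness. Suppose $f=(f_1,\dots,f_{2m})^{\top}\in\Ltworm$ satisfies $(f,\phi_{k,l})_{\Ltworm}=(f,\psi_{k,l})_{\Ltworm}=0$ for all $k\in\Z^n$ and $1\leq l\leq m$. By the definitions \eqref{bas1}--\eqref{bas2}, these inner products depend only on the two components $f_{2l-1}$ and $f_{2l}$, and read
\begin{align*}
\int_Q\big(c_k f_{2l-1}+s_k f_{2l}\big)\,d^nx=0,\qquad \int_Q\big(-s_k f_{2l-1}+c_k f_{2l}\big)\,d^nx=0
\end{align*}
for every $k\in\Z^n$. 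Hence the pair $(f_{2l-1},f_{2l})^{\top}\in L^2(Q;\R^2)$ is orthogonal to every element of the basis from Lemma \ref{realbasis}, and therefore vanishes a.e.\ in $Q$. Since this holds for each $l\in\{1,\dots,m\}$, we conclude $f=0$.

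There is no serious obstacle: the only subtle point is keeping the two statements of orthogonality (within a coordinate pair, and across distinct coordinate pairs) separate, which is handled by the block structure \eqref{bas1}--\eqref{bas2}. The proof is thus a direct consequence of Lemma \ref{realbasis} together with the orthogonal coordinate-pair decomposition of $\Ltworm$.
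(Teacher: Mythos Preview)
Your proof is correct and follows exactly the same approach as the paper, which simply reads ``Apply Lemma \ref{realbasis}.'' You have spelled out the details of that application---the block decomposition of $\Ltworm$ into $m$ orthogonal copies of $L^2(Q;\R^2)$ and the reduction of both orthonormality and completeness to Lemma \ref{realbasis}---but the underlying idea is identical.
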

\begin{proof} Apply Lemma \ref{realbasis}.\end{proof}
Next, we introduce the operator $T$ whose closure is the $\vec{\theta}$-periodic Laplacian.
\begin{lemma}\lb{optT}
The operator
 \begin{align}
 &T:\dom (T)\subset \Ltworm\rightarrow \Ltworm,\no\\
 &\dom (T):=\Big\{c_1\phi_{k,l}+c_2\psi_{k,l}+c_3\eta\Big|\no\\ 
 &\qquad\qquad\quad\ \ c_i\in \R,\ \eta \in C^\infty_{0}(Q;\R^{2m}),\ k\in\bbZ^n,\ 1\leq l \leq m,\ 1\leq i\leq 3\Big\},\no\\
 & Tu:=-\Delta_{\max} u, \ u\in  \dom (T)\no
 \end{align}
is densely defined, the equality $(Tu,v)_{\Ltwor^{2m}}=(u,Tv)_{\Ltwor^{2m}}$ holds for all $u,v \in \dom(T)$; moreover, for each $k\in\bbZ^n$ and $1\leq l\leq m$ the functions $\phi_{k,l}(x)$ and $\psi_{k,l}(x)$ are the eigenfunctions of $T$ corresponding to the eigenvalues $\|A^{\top}(k-\vec{\theta})\|^2_{\R^n}$.
\end{lemma}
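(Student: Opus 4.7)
My plan is to handle the three assertions (density, the eigenvalue identity, and symmetry) in order, leveraging Corollary~\ref{basis} and a careful application of Green's identity that exploits the algebraic identity $M_j^\top M_j = I_{2m}$.

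\emph{Density.} Since Corollary~\ref{basis} asserts that $\{\phi_{k,l},\psi_{k,l}\}_{k\in\mathbb{Z}^n,\,1\le l\le m}$ is an orthonormal basis of $L^2(Q;\mathbb{R}^{2m})$, each $\phi_{k,l}$ and $\psi_{k,l}$ lies in $\dom(T)$, and their finite linear combinations are already dense; a fortiori $\dom(T)$ is dense in $L^2(Q;\mathbb{R}^{2m})$.

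\emph{Eigenfunction identity.} A direct computation using the chain rule shows that for each $k\in\mathbb{Z}^n$,
\[
-\Delta c_k(x)=\|A^\top(\vec{\theta}-k)\|_{\R^n}^2\,c_k(x),\qquad -\Delta s_k(x)=\|A^\top(\vec{\theta}-k)\|_{\R^n}^2\,s_k(x).
\]
In view of the definitions \eqref{bas1}-\eqref{bas2}, this immediately yields $-\Delta\phi_{k,l}=\|A^\top(k-\vec{\theta})\|_{\R^n}^2\phi_{k,l}$ and the same identity for $\psi_{k,l}$.

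\emph{Symmetry.} First, the addition formulas for $\cos$ and $\sin$ together with the identity $A^\top(\vec{\theta}-k)\cdot a_j=2\pi(\theta_j-k_j)$ give
\[
\bigl(c_k(x+a_j),s_k(x+a_j)\bigr)^\top=\begin{bmatrix}\cos 2\pi\theta_j & -\sin 2\pi\theta_j\\ \sin 2\pi\theta_j & \cos 2\pi\theta_j\end{bmatrix}\bigl(c_k(x),s_k(x)\bigr)^\top,
\]
so, using \eqref{dfnMM}-\eqref{dfncM}, every basis vector $\phi_{k,l}$ and $\psi_{k,l}$ satisfies $u_j^1=\cM_j u_j^0$. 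The corresponding identity for $\nabla u$ follows by differentiating the same relation, and taking into account that the outward unit normals on the opposite faces ${\partial Q}_j^0$ and ${\partial Q}_j^1$ differ by a sign, one obtains $\partial_j u^1=-\cM_j\partial_j u^0$. The $C_0^\infty$ summand $\eta$ and its gradient vanish on $\partial Q$, and thus every element of $\dom(T)$ satisfies both real $\vec{\theta}$-periodic boundary conditions. Given $u,v\in\dom(T)$, Green's identity yields
\[
(Tu,v)_{L^2(Q;\R^{2m})}-(u,Tv)_{L^2(Q;\R^{2m})}=\sum_{j=1}^n\sum_{s=0}^{1}\int_{{\partial Q}_j^{s}}\bigl[u\cdot\partial_\nu v-v\cdot\partial_\nu u\bigr]\,dS.
\]
On each pair of opposite faces I change variables $y=x-a_j\in{\partial Q}_j^0$ in the integral over ${\partial Q}_j^1$; the boundary conditions substitute $M_j$'s into both factors, and the identity $M_j^\top M_j=I_{2m}$ together with the sign flip of the outward normal makes the ${\partial Q}_j^1$ contribution exactly the negative of the ${\partial Q}_j^0$ contribution. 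The boundary terms therefore cancel in pairs, giving $(Tu,v)=(u,Tv)$.

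The main technical point is the pairwise cancellation of boundary integrals in the symmetry step; every other ingredient is a direct calculation or a citation of Corollary~\ref{basis}. I expect no serious obstacles beyond careful bookkeeping of the orientation of the outward normal and the orthogonality of the rotation blocks $M_j$.
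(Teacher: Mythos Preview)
Your proof is correct and follows essentially the same approach as the paper: both verify that every element of $\dom(T)$ satisfies the real $\vec{\theta}$-periodic boundary conditions and then use Green's formula together with the isometry property $M_j^\top M_j=I_{2m}$ to cancel the contributions from opposite faces. The only cosmetic differences are that you argue density via the basis of Corollary~\ref{basis} while the paper invokes $C_0^\infty(Q;\R^{2m})\subset\dom(T)$, and you use the second Green identity directly whereas the paper integrates by parts once to reach the symmetric Dirichlet form $\int_Q\langle\nabla u,\nabla v\rangle$.
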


\begin{proof} The operator $T$ is densely defined, since $C^\infty_{0}(Q;\R^{2m})\subset \dom(T)$. 

For $u,v \in \dom(T)$, integrating by parts twice
 \begin{align}
 &(Tu,v)_{(\Ltwor)^{2m}}=(-\Delta_{\max}u,v)_{(\Ltwor)^{2m}}=\int\limits_{Q}(-\Delta u, v)_{\R^{2m}}d^nx\no\\
 &=-\int\limits_{\partial Q}((\nabla u)\overrightarrow{\nu}, v)_{\R^{2m}}d^{n-1}x+\int\limits_{Q}\langle\nabla u,\nabla v\rangle_{\R^{2m}}d^nx=\int\limits_{Q}\langle\nabla u,\nabla v\rangle_{\R^{2m}}d^nx\no\\
 &=(u,Tv)_{(\Ltwor)^{2m}},
 \end{align}
 yields $(Tu,v)_{\Ltwor^{2m}}=(u,Tv)_{\Ltwor^{2m}}$. Indeed, in order to prove the equality $\int\limits_{\partial Q}(\nabla u)\overrightarrow{\nu}, v)_{\R^{2m}}d^{n-1}x=0$, we notice that $\cM_jw_j^0=w_j^1$ and $-\cM_j\partial_jw^0=\partial_jw^1$ for all $w\in \dom(T)$ and $1\leq j\leq n$. Since $\cM_j$ is an isometry for all $1\leq j\leq n$, we have
 \begin{align}
 &\int\limits_{\partial Q}( (\nabla u)\overrightarrow{\nu}, v)_{\R^{2m}}d^{n-1}x=\sum\limits_{j=1}^{n}\Bigg(\int\limits_{{\partial Q}_{j}^1}(\partial_{j} u^1(x), v_{j}^1(x))_{\R^{2m}}d^{n-1}x\no\\ &\hspace{21em}+ \int\limits_{{\partial Q}_{j}^0}(\partial_{j} u^0(x), v_{j}^0(x))_{\R^{2m}}d^{n-1}x\Bigg)\no\\
 &=\sum\limits_{j=1}^{n}\Bigg(\int\limits_{{\partial Q}_{j}^1}(-(\cM_j\partial_j u^0)(x), (\cM_jv_j^0)(x))_{\R^{2m}}d^{n-1}x\no\\ &\hspace{21em}+
 \int\limits_{{\partial Q}_j^0}(\partial_j u^0(x), v_j^0(x))_{\R^{2m}}d^{n-1}x\Bigg)\no\\
 &=\sum\limits_{j=1}^{n}\Bigg(\int\limits_{{\partial Q}_j^0}(-\partial_ju^0(x), v_j^0(x))_{\R^{2m}}d^{n-1}x+
 \int\limits_{{\partial Q}_j^0}(\partial_j u^0(x), v_j^0(x))_{\R^{2m}}d^{n-1}x\Bigg)=0.\no
 \end{align}
A straightforward differentiating yields
\begin{align}
&-\Delta_{\max}\phi_{k,l}(x)=\|A^{\top}(\vec{\theta}-k)\|^2_{\R^n}\phi_{k,l}(x),\,
-\Delta_{\max}\psi_{k,l}(x)=\|A^{\top}(\vec{\theta}-k)\|^2_{\R^n}\psi_{k,l}(x)\no
\end{align}
for all $x\in Q$, $k\in\bbZ^n$, $1\leq l\leq m$, and the last assertion in the lemma.
\end{proof}

\begin{proof}[Proof of Theorem \ref{operdef}] The operator $T$ from Lemma \ref{optT} satisfies assumptions of Lemma \ref{abs}, therefore it is essentially self-adjoint. We claim that  
$\dom(\overline{T})\subset \Htworm$. To prove this inclusion, we recall that $\dom(\overline{T})$ is equal to the closure of $\dom(T)$ with respect to the graph norm of $T$. By Poincare's inequality there exist positive constants $c_1(m,n,Q), c_2(m,n,Q)$ such that the inequalities 
\begin{align}
c_1(m,n,Q) \|\varphi\|_{(\Htworm)}^2&\leq\|\varphi\|_{(\Ltworm)}^2+\|-\Delta_{\max, 2m}\varphi\|_{(\Ltworm)}^2 \no\\
&\leq c_2(m,n,Q) \|\varphi\|_{(\Htworm)}^2\no
\end{align}
hold for all $\varphi\in\dom(T)$, thus the graph norm of the operator $T$ is equivalent to $\Htworm$-norm. Since $\dom(T)\subset \Htworm$, its closure with respect to $\Htworm$-norm is still a subset of $\Htworm$, i.e. $\dom(\overline{T})\subset \Htworm$. 
Since $-\Delta_{\min}\subset T\subset\overline{T}$ and $\overline{T}$ is self-adjoint, one has $-\Delta_{\min}\subset\overline{T}\subset(-\Delta_{\min})^*=-\Delta_{\max}.$
Next, we claim that $\overline{T}=-\Delta_{\theta, \R}$; assuming this claim we observe that Lemma \ref{abs} with $T=S$ and Lemma \ref{optT} imply all statements of Theorem \ref{operdef}. 

Starting the proof of the claim, let us prove $\overline{T}\subset-\Delta_{\theta, \R}$. Indeed, we observe that $\overline{T}\subset-\Delta_{\max}$ and $-\Delta_{\theta, \R}\subset-\Delta_{\max}$, therefore it is enough to prove the inclusion $\dom(\overline{T})\subset\dom(-\Delta_{\theta, \R})$. Since $\dom(\overline{T})\subset \Htworm$, it remains to show that functions from $\dom(\overline{T})$ satisfy the $\vec{\theta}$-periodic boundary conditions.  The latter follows from the continuity of the Dirichlet and Neumann trace operators. Indeed, recalling notation \eqref{dfncM},  for each $h\in \dom(T)$ one has $h^1_j=\cM_jh^0_j,\ \partial_jh^1=-\cM_j\partial_jh^0,\ 1\leq j\leq n$. For each $f\in\dom(\overline{T})$, there exists a sequence $h_r\in\dom(T)$, such that $f=\lim\limits_{r\rightarrow \infty}h_r$ in $\Htworm$. Finally, $\cM_j\in\cB (L^2({\partial Q}_j^0;\
R^{2m}), L^2({\partial Q}_j^1;\R^{2m}))$ 
implies $f^1_j=\cM_jf^0_j,\ \partial_jf^1=-\cM_j\partial_jf^0,\ 1\leq j\leq n $, and yields the required inclusion $\dom(\overline{T})\subset \dom(-\Delta_{\vec{\theta},\R})$. Since $-\Delta_{\theta, \R}$ is symmetric (this follows from integration by parts), $\overline{T}\subset-\Delta_{\theta, \R}$ and $\overline{T}$ is self-adjoint, we conclude that $\overline{T}=-\Delta_{\theta, \R}$.
\end{proof}
We will now discuss the Laplace operator acting on the space of complex valued functions. Analogously  to Theorem \ref{operdef}, one defines  the $\vec{\theta}$-periodic Laplacian acting on the complex Hilbert space $\Ltwom$. We adopt the same notations  $\gamma_D, \gamma_N$ for the Dirichlet and Neumann traces of complex valued functions and use notation \eqref{notu10}. Also, for the complex case, we introduce the translation operators $\bbM_j$ as follows, cf.\ \eqref{dfncM}:
\begin{align}
&\bbM_j\in\cB (L^2({\partial Q}_j^0;\C^m), L^2({\partial Q}_j^1;\C^m)),\no\\
&(\bbM_j u)(x)=\e^{2\pi\bfi\theta_j}u(x-a_j) \text{\ for a.a.\ }x\in{\partial Q}_j^1,\ 1\leq j\leq n.\label{dfnbM}
\end{align}

\begin{theorem}\lb{operdef1}The linear operator
	\begin{align}
	 -\Delta_{\vec{\theta}}&:\dom (-\Delta_{\vec{\theta}})\subset \Ltwom\rightarrow \Ltwom,\no\\
	 \dom (-\Delta_{\vec{\theta}})&:=\Big\{u \in\Htwom \Big|\,u^1_j=\bbM_ju^0_{j},\partial_ju^1=-\bbM_j\partial_{j}u^0,\ 1\leq j\leq n\  \Big\},\no\\
-\Delta_{\vec{\theta}} u&:=-\Delta u, \ u\in  \dom (-\Delta_{\vec{\theta}})\no
\end{align}
	is self-adjoint, moreover
$-\Delta_{\min}\subset-\Delta_{\vec{\theta}}\subset-\Delta_{\max}$.
	In addition, the operator $-\Delta_{\vec{\theta}}$ has compact resolvent, in particular, it has purely discrete spectrum. Finally, $\spec(-\Delta_{\vec{\theta}})=\big\{\|A^{\top}(\vec{\theta}-k)\|^2_{\R^n}\big\}_{k\in\bbZ^n}$.
\end{theorem}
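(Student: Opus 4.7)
My plan is to deduce Theorem \ref{operdef1} from the already-established Theorem \ref{operdef} via a natural isometric $\R$-linear isomorphism between $\Ltwom$ and $\Ltworm$ that intertwines the complex and real $\vec{\theta}$-periodic Laplacians.

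Define $U\colon \Ltwom\to\Ltworm$ by interleaving real and imaginary parts componentwise,
\[
U(u_1,\ldots,u_m)^\top := (\Re u_1,\Im u_1,\ldots,\Re u_m,\Im u_m)^\top.
\]
When $\Ltwom$ is equipped with the real inner product $\Re(\cdot,\cdot)_{\Ltwom}$, the map $U$ is an isometric $\R$-linear bijection. Pointwise multiplication by $\e^{2\pi\bfi\theta_j}$ on a single $\C$-component corresponds under $U$ to multiplication by the block $\left[\begin{smallmatrix}\cos 2\pi\theta_j & -\sin 2\pi\theta_j\\ \sin 2\pi\theta_j & \cos 2\pi\theta_j\end{smallmatrix}\right]$ on the corresponding $\R^2$-pair, so the translation operator $\bbM_j$ from \eqref{dfnbM} is intertwined with $\cM_j$ from \eqref{dfncM}. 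Since $-\Delta$ acts componentwise and commutes with taking real and imaginary parts, $U$ maps $\Htwom$ onto $\Htworm$, intertwines the Dirichlet and Neumann trace operators face by face, and produces
\[
U\,\dom(-\Delta_{\vec{\theta}}) = \dom(-\Delta_{\vec{\theta},\R}),\qquad U(-\Delta_{\vec{\theta}})u = (-\Delta_{\vec{\theta},\R})Uu.
\]

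With this intertwining in hand, every assertion of Theorem \ref{operdef1} follows from its counterpart in Theorem \ref{operdef}. Self-adjointness transfers because $-\Delta_{\vec{\theta}}$ is $\C$-linear, and for $\C$-linear operators the complex adjoint and the adjoint with respect to the real inner product coincide. The inclusions $-\Delta_{\min}\subset -\Delta_{\vec{\theta}}\subset -\Delta_{\max}$ are preserved by $U$, and compactness of the resolvent transfers because $U$ preserves the compact operator ideal. The spectral identity is cleanest to verify directly: the functions $|Q|^{-1}\zeta_k(x) e_l$, $k\in\bbZ^n$, $1\leq l\leq m$, where $\{e_l\}$ denotes the standard basis of $\C^m$, are $\vec{\theta}$-periodic eigenfunctions of $-\Delta_{\vec{\theta}}$ with eigenvalues $\|A^{\top}(\vec{\theta}-k)\|^2_{\R^n}$, and by Lemma \ref{complexbasis} they form an orthonormal basis of $\Ltwom$, which yields the stated spectrum.

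The only subtle point is the equivalence of real and complex self-adjointness for a $\C$-linear operator. An alternative route that avoids it is to mirror the proof of Theorem \ref{operdef} verbatim: define the complex analog of the operator $T$ from Lemma \ref{optT} on the span of $\{|Q|^{-1}\zeta_k e_l\}$ together with $C_0^\infty(Q;\C^m)$, verify symmetry by the same integration-by-parts argument (the boundary contribution vanishes since $\bbM_j$ is unitary and $\partial_ju^1=-\bbM_j\partial_ju^0$), and invoke Lemma \ref{abs}, which is stated so as to apply to complex Hilbert spaces as well.
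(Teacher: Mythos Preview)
Your proposal is correct. The paper's own proof is a one-line remark: ``This follows from Lemmas \ref{abs} and \ref{complexbasis} and arguments similar to the proof of Theorem \ref{operdef}.'' In other words, the paper takes exactly the second route you sketch at the end---define the complex analog of the operator $T$ from Lemma \ref{optT}, check symmetry by the same boundary computation (now using that $\bbM_j$ is unitary on $L^2(\partial Q_j;\C^m)$), apply Lemma \ref{abs} with the orthonormal basis $\{|Q|^{-1}\zeta_k e_l\}$ supplied by Lemma \ref{complexbasis}, and finish as in the real case.

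Your primary approach---transferring everything from Theorem \ref{operdef} via the $\R$-linear isometry $U$---is a genuinely different and perfectly valid route. It trades the repetition of the integration-by-parts and closure arguments for the single observation that real and complex adjoints agree for $\C$-linear operators, which you identify correctly as the only nontrivial point. The paper's approach is more self-contained (no need to think about real versus complex adjoints) but requires re-running the whole machinery of Lemmas \ref{optT} and \ref{abs}; your approach is shorter once Theorem \ref{operdef} is in hand, and makes the parallel between the two theorems structurally transparent. Either is fine here.
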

\begin{proof}
 This follows from Lemmas \ref{abs} and \ref{complexbasis} and arguments similar to the proof of Theorem \ref{operdef}.  
\end{proof}
Next, we prove a simple proposition relating the spectra and the real and complex dimensions of the kernels of the Schr\"odinger operators acting in the spaces of real and complex valued functions.
For a given $V: Q\rightarrow \R^{m\times m} \text{\ and\ } t\in\R,$ we recall the notation $V_{t,\R}(x):=t^2V(tx)\otimes I_2$, $V_{\R}(x):=V(x)\otimes I_2$, and $V_t(x):=t^2V(tx)$. We also denote
$J:=\left[\begin{smallmatrix}
0 & -1 \\1 & 0\end{smallmatrix}\right]$, $\cJ=I_m\otimes J$.
\begin{proposition}\lb{3.9}
Let $V\in L^{\infty}(Q;\R^{m\times m})$ be a function whose values are symmetric matrices with real valued entries. Then $\spec\big(-\Delta_{\vec{\theta}}+V\big)=\spec\big(-\Delta_{\vec{\theta},\R}+V_{\R}\big)$. Moreover, for all real $\lambda$ we have
\begin{equation}\lb{dimeq}
\dim_{\R}\ker\big(-\Delta_{\vec{\theta},\R}+V_{\R}-\lambda\big)=2\dim_{\C}\ker\big(-\Delta_{\vec{\theta}}+V-\lambda\big). 
\end{equation}

\end{proposition}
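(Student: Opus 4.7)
The plan is to construct a natural $\R$-linear isometric bijection $\Phi:\Ltwom\to\Ltworm$ and to show that it intertwines the complex Schr\"odinger operator with its real counterpart. I would define $\Phi$ componentwise by $\Phi(u_1,\dots,u_m):=(\Re u_1,\Im u_1,\dots,\Re u_m,\Im u_m)$. This is an $\R$-linear bijection that is an isometry of the underlying real Hilbert space structures. The first key observation is that multiplication by $\bfi$ on $\C^m$ corresponds under $\Phi$ precisely to the operator $\cJ=I_m\otimes J$ on $\R^{2m}$, i.e., $\Phi(\bfi u)=\cJ\Phi(u)$.

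Next I would check the three pieces of the intertwining. The Laplacian acts componentwise on $\C^m$-valued functions and commutes with taking real and imaginary parts, so $\Phi\circ(-\Delta)=(-\Delta)\circ\Phi$. Since $V(x)$ has real entries, the Kronecker-product identity $V_\R=V\otimes I_2$ yields $\Phi(V(x)u)=V_\R(x)\Phi(u)$. For the boundary conditions the essential computation is the matrix identity
\[
M_j=\cos(2\pi\theta_j)\,I_{2m}+\sin(2\pi\theta_j)\,\cJ,
\]
which identifies $M_j$ as the real form of multiplication by $\e^{2\pi\bfi\theta_j}$. This gives $\Phi\circ\bbM_j=\cM_j\circ\Phi$, and the analogous identity for Neumann traces follows since differentiation and $\Phi$ commute. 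Collecting these facts, $\Phi$ maps $\dom(-\Delta_{\vec{\theta}})$ bijectively onto $\dom(-\Delta_{\vec{\theta},\R})$ and intertwines the perturbed operators: for every real $\lambda$,
\[
\Phi\circ(-\Delta_{\vec{\theta}}+V-\lambda)=(-\Delta_{\vec{\theta},\R}+V_\R-\lambda)\circ\Phi.
\]

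From this, both conclusions follow immediately. By Theorems \ref{operdef} and \ref{operdef1} both spectra are real and purely discrete, so the intertwining forces $\spec(-\Delta_{\vec{\theta}}+V)=\spec(-\Delta_{\vec{\theta},\R}+V_\R)$. For the dimension formula, $\Phi$ restricts to an $\R$-linear bijection of the two kernels. Using that $\cJ$ commutes with $-\Delta_{\vec{\theta},\R}+V_\R-\lambda$ (since $I_m\otimes J$ commutes with $V\otimes I_2$ and with the scalar Laplacian), if $\{u^{(1)},\dots,u^{(d)}\}$ is a $\C$-basis of $\ker(-\Delta_{\vec{\theta}}+V-\lambda)$, then $\{\Phi(u^{(k)}),\cJ\Phi(u^{(k)})\}_{k=1}^d$ is an $\R$-basis of $\ker(-\Delta_{\vec{\theta},\R}+V_\R-\lambda)$, yielding \eqref{dimeq}. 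The only real obstacle is the bookkeeping in verifying the boundary-condition intertwining $\Phi\circ\bbM_j=\cM_j\circ\Phi$, which reduces to the rotation-matrix identity displayed above; everything else is a direct componentwise calculation.
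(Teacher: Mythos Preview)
Your proposal is correct and uses essentially the same ingredients as the paper's proof: your map $\Phi$ is exactly the paper's $\cC^{-1}$, the identity $\Phi(\bfi u)=\cJ\Phi(u)$ is the paper's relation $\bfi\,\cC u=\cC\cJ u$, and your observation that $\cJ$ commutes with $-\Delta_{\vec{\theta},\R}+V_\R$ is what the paper uses as well. The only difference is organizational: the paper works directly with the kernels and proves the dimension equality by separate ``$\leq$'' and ``$\geq$'' linear-independence arguments, whereas you first establish the full operator intertwining $\Phi\circ(-\Delta_{\vec{\theta}}+V-\lambda)=(-\Delta_{\vec{\theta},\R}+V_\R-\lambda)\circ\Phi$ (including the domain bijection via the rotation-matrix identity for $M_j$) and then read off both the spectral equality and the dimension formula from the fact that $\Phi$ is an $\R$-linear isomorphism carrying the $\C$-structure to the $\cJ$-structure. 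Your packaging is a bit cleaner, but the mathematical content is the same.
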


\begin{proof}
	Since the functions $V\in L^{\infty}(Q;\R^{m\times m})$ and $V_{\R}\in L^{\infty}(Q;\R^{2m\times 2m})$ take values in the set of symmetric matrices, by Theorems \ref{operdef} and \ref{operdef1} the operators $-\Delta_{\vec{\theta}}+V$ and $-\Delta_{\vec{\theta},\R}+V_{\R}$ are self-adjoint with the domains $\dom(-\Delta_{\vec{\theta}})$ and $\dom(-\Delta_{\vec{\theta},\R})$ correspondingly as bounded perturbations of $-\Delta_{\vec{\theta}}$ and $-\Delta_{\vec{\theta},\R}$.

Starting the proof of \eqref{dimeq}, we introduce the maps $\cC$, $\cC^{-1}$ between the real  and  complex finite dimentional spaces $\ker\big(-\Delta_{\vec{\theta},\R}+V_{\R}-\lambda\big)$ and $\ker\big(-\Delta_{\vec{\theta}}+V-\lambda\big)$:
\begin{align}
&\cC:\ker\big(-\Delta_{\vec{\theta},\R}+V_{\R}-\lambda\big)\rightarrow \ker\big(-\Delta_{\vec{\theta}}+V-\lambda\big),\lb{C1}\\
&\cC: u=(u_1,u_2,\dots,u_{2m-1},u_{2m})\mapsto (u_1+\bfi u_2,\dots,u_{2m-1}+\bfi u_{2m}).\lb{C2}\\
&\cC^{-1}: v=(v_1,\dots,v_{m})\mapsto (\Re v_1,\Im v_2,\dots,\Re v_{m}, \Im v_{m}).\lb{Cinv2}
\end{align} 
By the definition of $\cC, \cC^{-1} $ we have: 
\begin{align}
\cC u&=0 \text{\ if and only if \ } u=0,\lb{czero}\\
 \bfi\cC u&=\cC \cJ u,\,
\cJ M_j = M_j \cJ,\text{\ for each \ } 1\leq j \leq n .\lb{cj1}
\end{align}
To prove the  inequality ``$\leq$" in \eqref{dimeq}, we notice that $u\in\ker\big(-\Delta_{\vec{\theta},\R}+V_{\R}-\lambda\big)$ if and only if $\cJ u\in\ker\big(-\Delta_{\vec{\theta},\R}+V_{\R}-\lambda\big)$
since $\cJ$ commutes with $-\Delta_{\vec{\theta},\R}$ and $V_{\R}$. In addition, $u,\cJ u$ are linearly independent over $\R$. Iterating, we construct the basis $u_1, u_2,\dots, u_{\varkappa}, \cJ u_1,\cJ  u_2,\dots,\cJ  u_{\varkappa}$, where $2\varkappa$ is the dimension of the real vector space $\ker\big(-\Delta_{\vec{\theta},\R}+V_{\R}-\lambda\big)$. We claim that $\cC u_1, \cC u_2,\dots, \cC u_{\varkappa}$ are linearly independent over $\C$. Indeed, if $\alpha_k, \beta_k\in\R$ and\begin{equation}\lb{lincontr}
0=\sum\limits_{k=1}^{\varkappa} (\alpha_k+\bfi\beta_k)\cC u_k=\cC\big(\sum\limits_{k=1}^{\varkappa} (\alpha_k+\cJ\beta_k) u_k\big)
\end{equation}
then $\alpha_k=\beta_k=0$ by \eqref{czero} and \eqref{cj1}, and we have the desired inequality.
To prove the inequality "$\geq$" in \eqref{dimeq}, we choose a basis $v_1, \dots, v_\kappa$,  where $\kappa$ is the dimension of the complex vector space $\ker\big(-\Delta_{\vec{\theta}}+V-\lambda\big)$. For the set of vectors $\cC^{-1}v_1, \dots, \cC^{-1}v_{\kappa},\cJ \cC^{-1}v_{1},\dots, \cJ\cC^{-1}v_{\kappa}$ and any real $\mu_1,\dots, \mu_{2\kappa}$ if  
\begin{equation}\lb{lincontr1}
0=\sum\limits_{j=1}^{\kappa}\big( \mu_j\cC^{-1} v_j+\mu_{\kappa+j}\cJ \cC^{-1} v_j\big)=\cC^{-1}\big(\sum\limits_{j=1}^{\kappa}\big( \mu_j+\bfi\mu_{\kappa+j}\big) v_j\big).
\end{equation}
then $\mu_j=\mu_{\kappa+j}=0$ which implies the inequality and concludes the proof.
\end{proof}
In the sequel we will impose the following assumptions on the potential $V$.
\begin{hypothesis} \lb{phyp}
	Let $V:Q\rightarrow\R^{m\times m}$ be such that $V(x)^{\top}=V(x)$ for almost all $x\in Q$. We will impose the following assumptions:
	\begin{enumerate}
		\item [{\it (i)}] $V\in L^{\infty}(Q;\R^{m\times m})$,
		\item [{\it (ii)}] $V\in C^1(Q;\R^{m\times m})$,
		\item [{\it (iii)}] $V\in C^1(Q;\R^{m\times m})$  and $\min\spec\big(2tV(tx)+t^2\nabla V(tx)x\big)>0$ for each $t~\in~(0,1]$ and almost all $x\in Q$,
		\item [{\it (iv)}] $V\in C^1(Q;\R^{m\times m})$  and $\min\spec\big(2tV(tx)+t^2\nabla V(tx)x\big)<0$ for each $t~\in~(0,1]$ and almost all $x\in Q$,
		\item [{\it (v)}] the matrix $V(0)$ is invertible.
	\end{enumerate}
\end{hypothesis}	
\noindent	In all cases we denote $\|V\|_{L^{\infty}(Q)}:=\sup\limits_{x\in Q}\|V(x)\|_{\R^{m\times m}}$. Aslo, we denote $$\lambda_{\infty}:=-\sup\limits_{t\in[0,1]}\|V_{t,\R}\|_{L^{\infty}(Q)}-1.$$
The following simple spectral results are used below.
\begin{lemma}\lb{noegv} Assume Hypothesis \ref{phyp} {\it (i)}. Then 
	\begin{equation}
	\lb{noegvf}
	\spec\big(-\Delta_{\vec{\theta},\R}+V_{t,\R}\big)\cap(-\infty, \lambda_\infty)=\emptyset,
	\end{equation}
	for all $t\in[0,1]$. In particular, \begin{equation}
\lb{cnoegvf}
\ker\big(-\Delta_{\vec{\theta},\R}+V_{t,\R}-\lambda\big)=\{0\},
\end{equation}
for all $t\in[0,1],\ \lambda<\lambda_{\infty}$. Moreover, if
$\vec{\theta}\not=0$ then 
\begin{equation}
\lb{noegv2f}\begin{split}
\spec\big(-\Delta_{\vec{\theta},\R}+V_{t,\R}\big)\cap(-\infty,0]&=\emptyset
\text{ for all $t\in(0,t_0]$} \\ 
& \text{ provided $t_0$ is sufficiently small.}\end{split}
\end{equation}
\end{lemma}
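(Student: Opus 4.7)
The plan rests on two ingredients: first, the explicit spectral formula $\spec(-\Delta_{\vec\theta,\R}) = \{\|A^\top(\vec\theta-k)\|_{\R^n}^2\}_{k\in\Z^n}$ furnished by Theorem \ref{operdef}, which places $-\Delta_{\vec\theta,\R}$ in $[0,\infty)$; and second, the elementary quadratic-form bound that if $T$ is self-adjoint with $T \geq c I$ and $S$ is bounded self-adjoint, then $T+S \geq (c-\|S\|)I$, so that $\spec(T+S)\subset[c-\|S\|,\infty)$. Since $V_{t,\R}$ is a bounded self-adjoint (indeed multiplication by a symmetric matrix-valued $L^\infty$ function) perturbation of $-\Delta_{\vec\theta,\R}$, both parts of the lemma reduce to estimates on $\|V_{t,\R}\|_{\cB(\Ltworm)}\leq\|V_{t,\R}\|_{L^\infty(Q)}$.

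For \eqref{noegvf} I would combine $-\Delta_{\vec\theta,\R}\geq 0$ with the above bound to get
\[
-\Delta_{\vec\theta,\R}+V_{t,\R}\;\geq\;-\|V_{t,\R}\|_{L^\infty(Q)}I\;\geq\;-\sup_{s\in[0,1]}\|V_{s,\R}\|_{L^\infty(Q)}I\;=\;(\lambda_\infty+1)I,
\]
by the definition of $\lambda_\infty$. This places the spectrum strictly above $\lambda_\infty$, and \eqref{cnoegvf} is then immediate since any nontrivial kernel element at $\lambda<\lambda_\infty$ would produce an eigenvalue in $(-\infty,\lambda_\infty)$.

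For \eqref{noegv2f} the key observation is that when $\vec\theta\in[0,1)^n\setminus\{0\}$ one has
\[
\mu_{\vec\theta}:=\min_{k\in\Z^n}\|A^\top(\vec\theta-k)\|_{\R^n}^2\;>\;0.
\]
Indeed, invertibility of $A^\top$ forces $A^\top(\vec\theta-k)=0$ to mean $\vec\theta=k\in\Z^n$, which is incompatible with $\vec\theta\in[0,1)^n\setminus\{0\}$; the minimum is attained because $\|A^\top(\vec\theta-k)\|\to\infty$ as $|k|\to\infty$ leaves only finitely many competitors. Hence $-\Delta_{\vec\theta,\R}\geq\mu_{\vec\theta}I$. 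Next, using that $V$ is periodic on $\R^n$ and that $tQ\subset Q$ for $t\in[0,1]$ (since $Q$ is convex and contains $0$), I would bound
\[
\|V_{t,\R}\|_{L^\infty(Q)}\;=\;t^2\sup_{y\in tQ}\|V(y)\otimes I_2\|_{\R^{2m\times 2m}}\;\leq\; t^2\|V\|_{L^\infty(Q)}.
\]
Choosing $t_0\in(0,1]$ with $t_0^2\|V\|_{L^\infty(Q)}<\mu_{\vec\theta}$ gives
\[
-\Delta_{\vec\theta,\R}+V_{t,\R}\;\geq\;\bigl(\mu_{\vec\theta}-t^2\|V\|_{L^\infty(Q)}\bigr)I\;>\;0,\qquad t\in(0,t_0],
\]
so the spectrum avoids $(-\infty,0]$ on this range of $t$. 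There is essentially no structural obstacle; the only point requiring care is the strict positivity of $\mu_{\vec\theta}$, which comes for free from the arithmetic of $\vec\theta\in[0,1)^n\setminus\{0\}$ together with invertibility of $A^\top$.
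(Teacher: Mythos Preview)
Your proof is correct and follows essentially the same route as the paper: bound the spectrum of $-\Delta_{\vec\theta,\R}+V_{t,\R}$ from below using $\spec(-\Delta_{\vec\theta,\R})\subset[0,\infty)$ (respectively $\subset[\mu_{\vec\theta},\infty)$ when $\vec\theta\neq0$) together with the operator-norm bound $\|V_{t,\R}\|_{L^\infty(Q)}\le t^2\|V\|_{L^\infty(Q)}$. The only cosmetic difference is that the paper invokes the Kato perturbation estimate \cite[Theorem~V.4.10]{K80} on the distance between spectra, whereas you use the equivalent (and slightly more direct) quadratic-form inequality $T+S\ge(c-\|S\|)I$; your argument also spells out why $\mu_{\vec\theta}>0$ more carefully than the paper does.
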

\begin{proof}
By \cite[Theorem V.4.10]{K80} we have
\begin{equation}
\lb{disspect}
\text{dist} \big(\spec(-\Delta_{\vec{\theta},\R}), \spec(-\Delta_{\vec{\theta},\R}+V_{t,\R})\big)\leq \sup\limits_{t\in[0,1]}\|V_{t,\R}\|_{L^{\infty}(Q)}.
\end{equation}
The inclusion $\spec(-\Delta_{\vec{\theta},\R})\subset [0,\infty)$ and \eqref{disspect} imply \eqref{noegvf} and thus \eqref{cnoegvf}.
We recall that $\spec(-\Delta_{\vec{\theta},\R})=\big\{\|A^{\top}(\vec{\theta}-k)\|^2_{\R^n}\big\}_{k\in\bbZ^n}$ by Theorem \ref{operdef}. Assume $\vec{\theta}\not=0$. Then there exists a positive $\delta$, such that $\min\big(\spec(-\Delta_{\vec{\theta},\R})\big)\geq\delta>0$ since $A^{\top}$ is invertible. 
Next, we observe that
\begin{align}
\|V_{t,\R}\|_{L^{\infty}(Q)}&=t^2\|V(tx)\otimes I_2\|_{L^{\infty}(Q)} \no\\
&\leq t^2 \|V(x)\otimes I_2\|_{L^{\infty}(Q)}\leq \frac{\delta} 2, \text{\ for\ } t<\Big(\frac{\delta}{2\|V(x)\otimes I_2\|_{L^{\infty}(Q)}}\Big)^{\frac 1 2}.\lb{3.13}
\end{align}
By \eqref{3.13}, \eqref{disspect} and $\spec\big(-\Delta_{\vec{\theta},\R}\big)\subset[\delta,+\infty)$ we have
$\spec\big(-\Delta_{\vec{\theta},\R}+V_{t,\R}\big)\subset\big[{\delta}/{2}, +\infty\big)$,
which infers \eqref{noegv2f}.
\end{proof}
\begin{lemma}\lb{constspec}
Let $V_0$ be a symmetric $m\times m$ matrix with real entries and $\alpha\in\R.$ Then
\begin{align}
\spec\big(\alpha(-\Delta_{\vec{\theta}})+V_0\big)=\Big\{\alpha\lambda_k+\mu_l\Big|\lambda_k=\|A^{\top}(\vec{\theta}-k)\|^2_{\R^n},\  k\in\Z^n,&\no\\
\mu_l\in \spec \left(V_0\right),\ 1\leq l\leq m &\Big\}\lb{specsum}
\end{align}
\end{lemma}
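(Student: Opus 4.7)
The plan is to reduce the problem to a direct sum of $m$ scalar problems by diagonalizing the constant matrix $V_0$, and then to use Theorem \ref{operdef1} together with the eigenbasis provided by Lemma \ref{complexbasis}.

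First, since $V_0 \in \R^{m\times m}$ is symmetric, there exists an orthogonal matrix $U \in \R^{m\times m}$ with $U^{\top}V_0 U = D := \diag(\mu_1,\dots,\mu_m)$, where $\mu_1,\dots,\mu_m$ are the eigenvalues of $V_0$ (with algebraic multiplicities). Multiplication by $U$ pointwise on $Q$ defines a unitary operator $\cU$ on $L^2(Q;\C^m)$. The key observation is that $\cU$ preserves $\dom(-\Delta_{\vec{\theta}})$: the Laplacian acts componentwise, so it commutes with the constant matrix $U$; the translation operators $\bbM_j$ defined in \eqref{dfnbM} are scalar multiples of pure translations, hence also commute with the constant matrix $U$. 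Therefore $\cU^*(\alpha(-\Delta_{\vec{\theta}}) + V_0)\cU = \alpha(-\Delta_{\vec{\theta}}) + D$, and this reduces the problem to the case of diagonal $V_0 = D$.

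Second, with $V_0 = D$ the operator decomposes as an orthogonal direct sum
\[
\alpha(-\Delta_{\vec{\theta}}) + D = \bigoplus_{l=1}^{m}\bigl(\alpha(-\Delta^{(1)}_{\vec{\theta}}) + \mu_l I\bigr),
\]
where $-\Delta^{(1)}_{\vec{\theta}}$ denotes the scalar ($m=1$) version of the $\vec{\theta}$-periodic Laplacian on $L^2(Q;\C)$. The spectrum of a direct sum of self-adjoint operators being the union of the individual spectra, it suffices to compute $\spec(\alpha(-\Delta^{(1)}_{\vec{\theta}}) + \mu_l I)$ for each $l$.

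Third, by Theorem \ref{operdef1} applied with $m=1$ we have $\spec(-\Delta^{(1)}_{\vec{\theta}}) = \{\lambda_k\}_{k\in\Z^n}$ with $\lambda_k = \|A^{\top}(\vec{\theta}-k)\|^2_{\R^n}$, and Lemma \ref{complexbasis} provides a complete orthonormal basis $\{\zeta_k\}_{k\in\Z^n}$ of eigenfunctions. Hence $\{\zeta_k e_l\}_{k\in\Z^n,\,1\le l\le m}$ (with $\{e_l\}$ the standard basis of $\C^m$) is an orthonormal eigenbasis of $\alpha(-\Delta_{\vec{\theta}}) + D$ with eigenvalues $\alpha\lambda_k + \mu_l$. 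Since $\lambda_k\to+\infty$ as $|k|\to\infty$, the set $\{\alpha\lambda_k+\mu_l : k\in\Z^n,\,1\le l\le m\}$ has no finite accumulation point (the case $\alpha=0$ being trivially a finite set), so it is closed in $\R$. Applying Lemma \ref{abs} to this operator then identifies its spectrum with exactly this set, which upon reversing the unitary reduction yields \eqref{specsum}.

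There is no serious obstacle: the only point deserving attention is the commutation of $\cU$ with the boundary condition operators $\bbM_j$, and this is immediate from the scalar nature of $\bbM_j$ together with the fact that $U$ is a constant (i.e., $x$-independent) matrix.
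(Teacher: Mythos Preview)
Your proof is correct, but it takes a genuinely different route from the paper's. The paper observes that $L^2(Q;\C^m)=L^2(Q;\C)\otimes\C^m$, writes $\alpha(-\Delta_{\vec{\theta}})=\alpha(-\Delta^{(1)}_{\vec{\theta}})\otimes I_m$ and $V_0=I_{L^2(Q;\C)}\otimes V_0$, and then invokes \cite[Theorem~VII.33(b)]{RS1} to conclude that the spectrum of the sum is the sum of the spectra; one line of computation with $\spec(-\Delta^{(1)}_{\vec{\theta}})$ from Theorem~\ref{operdef1} then finishes. Your argument instead diagonalizes $V_0$ by a constant orthogonal matrix, checks that this conjugation preserves $\dom(-\Delta_{\vec{\theta}})$ (correct, since the $\bbM_j$ are scalar and $U$ is $x$-independent), decomposes into a direct sum of scalar problems, and reads off the spectrum from the explicit eigenbasis $\{\zeta_k e_l\}$ via Lemma~\ref{abs}. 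The paper's route is shorter because it delegates all the work to the Reed--Simon tensor product spectrum theorem and never needs to diagonalize $V_0$ or verify domain preservation. Your route is more elementary and self-contained: it avoids the external citation entirely, and the closure remark you make (that $\{\alpha\lambda_k+\mu_l\}$ is already closed) is exactly what is needed to identify the spectrum with the eigenvalue set; the paper's approach hides this inside the cited theorem.
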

\begin{proof} We temporally denote by $-\Delta^{(1)}_{\vec{\theta}}$ the $\vec{\theta}$-periodic Laplacian acting  in $L^2(Q;\C)$. Since $L^2(Q;\C^m)=L^2(Q;\C)\otimes \C^m$, the tensor product of the spaces, we observe that $\alpha(-\Delta_{\vec{\theta}})=\alpha(-\Delta^{(1)}_{\vec{\theta}})\otimes I_m \text{\ and note that $V_0$ is understood as \ } I_{L^2(Q;\C)}\otimes V_0. $ Then, by the abstract result in \cite[Theorem VII.33 (b)]{RS1}, we have
\begin{equation}
\lb{sumdx}
\spec\big(\alpha(-\Delta_{\vec{\theta}})+V_0\big)=\spec\big(\alpha(-\Delta^{(1)}_{\vec{\theta}})\big)+\spec(V_0).
\end{equation} 
Combining \eqref{sumdx} and $\spec(-\Delta^{(1)}_{\vec{\theta}})=\big\{\|A^{\top}(\vec{\theta}-k)\|^2_{\R^n}\big\}_{k\in\bbZ^n}$, we derive \eqref{specsum}.\end{proof} 

We will now apply Theorem \ref{mormas} to compute the Morse index for the Schr\"odinger operator with {\em quasi-periodic} boundary conditions (when $\vec{\theta}\not=0$).
The resulting formula contains the Maslov index of  a path in the space of abstract boundary values that corresponds to the right vertical side of the rectangle $\Gamma$ in Figure 1. In addition, we will express the Morse index via the sum of dimensions of the kernels of the Schr\"odinger operators $-\Delta_{\vec{\theta},t}+V_{tQ}$ obtained by shrinking the unit cell $Q$. 
To define these operators,
for a given $t\in(0,1]$, we set $tQ:=\{y\in \R^n\big| y=tx, x\in Q\}$ and $V_{tQ}:=V\big|_{tQ}$, and consider the rescaled $\vec{\theta}$-periodic real Laplacian
\begin{align}
 -\Delta_{\vec{\theta},\R,t}&:\dom (-\Delta_{\vec{\theta}, \R,t})\subset \Ltwortm\rightarrow \Ltwortm,\no\\
 \dom (-\Delta_{\vec{\theta}, \R,t})&:=\Big\{u \in\Htwortm \Big|\, u^1_j=\cM_{j,t}u^0_{j},\no\\
 &\hskip4cm \partial_ju^1=-\cM_{j,t}\partial_{j}u^0,
  1\leq j\leq n\  \Big\},\no\\
 -\Delta_{\vec{\theta}, \R,t} u&:=-\Delta u, \ u\in  \dom (-\Delta_{\vec{\theta}, \R,t}),\no
\end{align}
where $\cM_{j,t},\ t\in(0,1],\ 1\leq j\leq n$ is the weighted translation operator corresponding to the Dirichlet and Neumann traces for the unit cell $tQ$, cf.\ \eqref{dfncM}. The operator  $-\Delta_{\theta,\R,t}$ is self-adjoint and has compact resolvent. The corresponding complex rescaled $\vec{\theta}$-periodic Laplacianis is defined as follows
\begin{align}
 -\Delta_{\vec{\theta},t}&:\dom (-\Delta_{\vec{\theta},t})\subset \Ltwotm\rightarrow \Ltwotm,\no\\
 \dom (-\Delta_{\vec{\theta},t})&:=\Big\{u \in\Htwotm \Big|\ u^1_j=\bbM_{j,t}u^0_{j},\ \partial_ju^1=-\bbM_{j,t}\partial_{j}u^0,\ 1\leq j\leq n\  \Big\},\no\\
-\Delta_{\vec{\theta},t} u&:=-\Delta u, \ u\in  \dom (-\Delta_{\vec{\theta},t}),\no
\end{align}
where $\bbM_{j,t},\ t\in(0,1],\ 1\leq j\leq n$ is a weighted translation operator corresponding to the Dirichlet and Neumann traces for the unit cell $tQ$, cf.\ \eqref{dfnbM}. The operator  $-\Delta_{\theta,t}$ is self-adjoint and has compact resolvent.

Recall that $\gamma:\dom(-\Delta_{\max})\rightarrow \dom(-\Delta_{\max})/\dom(-\Delta_{\min})$ is the natural projection. Let $\cX_{\vec{\theta}}:=\gamma(\dom(-\Delta_{\vec{\theta},\R}))$.

\begin{theorem}\lb{mormastper}
Assume Hypothesis \ref{phyp} {\it(ii)} and that $\vec{\theta}\not=0$. Then the Morse index of the operator $ -\Delta_{\vec{\theta}}+V$ and the Maslov index of the path $\Upsilon:[\tau,1]\rightarrow F\Lambda\big(\cX_{\vec{\theta}}\big)$ defined by $t\mapsto\gamma\big(\ker(-\Delta_{\max}+V_{t,\R})\big)$ for all $t\in[\tau,1]$ with $\tau\in(0,1]$ sufficiently small are related as follows:
\begin{equation}
\lb{morsmastperf}
2\mo_{\C}\big(-\Delta_{\vec{\theta}}+V\big)=-{\mi \big(\Upsilon,\cX_{\vec{\theta}}\big)}.
\end{equation}
Moreover, if Hypothesis \ref{phyp} (iii) holds then 
\begin{equation}
\lb{mormastperf2}
\mo_{\C}\big(-\Delta_{\vec{\theta}}+V\big)=0,
\end{equation}
if  Hypothesss \ref{phyp} (iv) holds then
\begin{equation}
\lb{mormastperf3}
\mo_{\C}\big(-\Delta_{\vec{\theta}}+V\big)=\sum\limits_{\tau\leq t < 1}\dim_{\C}\big(\ker(-\Delta_{\vec{\theta},t}+V_{tQ})\big).
\end{equation}
Furthermore, if $n=1$ then equality \eqref{mormastperf3} holds assuming that $V(x)$ is a negative definite matrix for all $x\in Q$.
\end{theorem}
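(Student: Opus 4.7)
\medskip
\noindent\textbf{Proof proposal.} The overall plan is to apply Theorem~\ref{mormas} to the real operator family $-\Delta_{\vec{\theta},\R}+V_{t,\R}$ on $\cH=\Ltworm$, with $A=-\Delta_{\min}$, $\cD=\dom(-\Delta_{\vec{\theta},\R})$ and the $C^1$ path $t\mapsto V_{t,\R}$ of bounded self-adjoint perturbations, and then convert the resulting real statements to their complex counterparts via Proposition~\ref{3.9}. The hypotheses of Theorem~\ref{mormas}{\it (i)} are to be verified as follows: $-\Delta_{\vec{\theta},\R}$ has compact resolvent by Theorem~\ref{operdef}; the family $V_{t,\R}$ is $C^1$ in $\cB(\cH)$ by Hypothesis~\ref{phyp}{\it (ii)}; absence of eigenvalues below $\lambda_{\infty}$ is Lemma~\ref{noegv}; and $\ker(-\Delta_{\max}+V_{t,\R}-\lambda)\cap\Htworom=\{0\}$ for $\lambda\in[\lambda_{\infty},0]$ is the standard weak unique continuation property for Schr\"odinger operators with bounded potentials.

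For \eqref{morsmastperf}, the key point is that \eqref{noegv2f} in Lemma~\ref{noegv}, which crucially uses $\vec{\theta}\neq 0$, produces $\tau\in(0,1]$ so small that $\spec(-\Delta_{\vec{\theta},\R}+V_{\tau,\R})\subset(0,\infty)$, whence $\mo(-\Delta_{\vec{\theta},\R}+V_{\tau,\R})=0$; formula \eqref{mff1} then rearranges into the claimed identity, and Proposition~\ref{3.9} supplies the factor $2$. For \eqref{mormastperf2} and \eqref{mormastperf3} I compute
\[
\dot V_{t,\R}(x)=\big(2tV(tx)+t^2\nabla V(tx)\cdot x\big)\otimes I_2,
\]
which is pointwise positive (respectively negative) definite under Hypothesis~\ref{phyp}{\it (iii)} (respectively {\it (iv)}), hence retains that sign when restricted to $\ker(-\Delta_{\vec{\theta},\R}+V_{t,\R})$. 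In case {\it (iii)}, Theorem~\ref{mormas}{\it (ii)} gives $\mo(-\Delta_{\vec{\theta},\R}+V_{\tau,\R})\geq\mo(-\Delta_{\vec{\theta},\R}+V_{\R})$; combined with $\mo(-\Delta_{\vec{\theta},\R}+V_{\tau,\R})=0$ and nonnegativity of the Morse index, this forces $\mo_{\C}(-\Delta_{\vec{\theta}}+V)=0$. In case {\it (iv)}, Theorem~\ref{mormas}{\it (iii)} provides the real analogue of \eqref{mormastperf3}; the change of variables $y=tx$ identifies each real kernel with $\ker(-\Delta_{\vec{\theta},\R,t}+V_{tQ,\R})$ on $tQ$, and Proposition~\ref{3.9} halves the dimensions to pass to the complex setting.

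The main obstacle is the final $n=1$ statement, where only $V(x)<0$ is assumed rather than the pointwise differential inequality of Hypothesis~\ref{phyp}{\it (iv)}. Here I would verify the weaker property actually required by the proof of Lemma~\ref{g2n}, namely negative definiteness of the crossing form $(u,\dot V_{t,\R}u)_{\cH}$ only on $\ker(-\Delta_{\vec{\theta},\R}+V_{t,\R})$. Given such a $u$ satisfying $-u''+t^2(V(tx)\otimes I_2)u=0$ on $[0,1]$ together with the quasi-periodic boundary conditions $u(1)=M_1 u(0)$, $u'(1)=M_1 u'(0)$, I would integrate by parts the contribution $t^2\int_0^1 x\,u^{\top}(V'(tx)\otimes I_2)u\,dx$ by means of the identity $\partial_x(u^{\top}(V(tx)\otimes I_2)u)=2(u')^{\top}(V(tx)\otimes I_2)u+t\,u^{\top}(V'(tx)\otimes I_2)u$, substitute $(V(tx)\otimes I_2)u=u''/t^2$ from the ODE, and use $M_1^{\top}M_1=I_{2m}$ together with the commutativity of $M_1$ with $V(t)\otimes I_2$. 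A Pohozaev-type collapse should yield
\[
(u,\dot V_{t,\R}u)_{\cH}=t\,u(0)^{\top}(V(t)\otimes I_2)u(0)-\tfrac{1}{t}|u'(0)|^2.
\]
Both summands are nonpositive since $V(t)$ is negative definite, and their simultaneous vanishing would force $u(0)=u'(0)=0$, hence $u\equiv 0$ by uniqueness of the linear second-order Cauchy problem. Thus the crossing form is strictly negative definite on the kernel; Theorem~\ref{masform}{\it (i)} contributes $-\dim\ker(-\Delta_{\vec{\theta},\R}+V_{t_*,\R})$ at each crossing $t_*$, and summation over $[\tau,1)$ combined with Proposition~\ref{3.9} and the rescaling $y=tx$ yields \eqref{mormastperf3}.
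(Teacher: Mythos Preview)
Your proposal is correct and follows essentially the same route as the paper: verify the hypotheses of Theorem~\ref{mormas} for $A=-\Delta_{\min}$, $A_\cD=-\Delta_{\vec{\theta},\R}$, $V_t=V_{t,\R}$ via Theorem~\ref{operdef}, Lemma~\ref{noegv} and the unique continuation result (Theorem~\ref{bucp}), use \eqref{noegv2f} to kill $\mo(-\Delta_{\vec{\theta},\R}+V_{\tau,\R})$ for small $\tau$, and pass between the real and complex pictures with Proposition~\ref{3.9}; for parts {\it (iii)}/{\it (iv)} the sign of $\dot V_{t,\R}$ feeds directly into Theorem~\ref{mormas}{\it (ii)}/{\it (iii)}, and the rescaling $y=tx$ identifies the kernels on $Q$ and $tQ$. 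The only cosmetic difference is in the $n=1$ clause: the paper obtains the boundary formula for the crossing form by specializing the general Pohozaev identity of Theorem~\ref{3.19teor} (yielding Theorem~\ref{onedim}), whereas you redo that integration by parts directly in one dimension---the two computations are the same and produce the same expression, up to the harmless use of the boundary relations $u(1)=M_1u(0)$, $u'(1)=M_1u'(0)$ and $M_1^{\top}(V(t)\otimes I_2)M_1=V(t)\otimes I_2$ to move the evaluation point.
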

\begin{proof}
 Theorem \ref{operdef} and Theorem \ref{bucp} imply that the assumptions of Theorem \ref{mormas} are fulfilled with $A:=-\Delta_{\min}$, $A_\cD:=-\Delta_{\theta,\R}$ and $V_t:=V_{t,\R}$. Thus, for sufficiently small $\tau>0$ we obtain 
 \begin{align}
\mo_{\R}(-\Delta_{\vec{\theta},\R}&+V_{\tau,\R})-\mo_{\R}(-\Delta_{\vec{\theta},\R}+V_{1,\R})\no\\
&=\mi(\gamma(\ker(-\Delta_{\max}+V_{t,\R}))|_{\tau\leq t\leq 1},\cX_{\vec{\theta}}). \lb{3.47}
 \end{align}
 By \eqref{noegv2f} we have
 \begin{equation}
 \lb{3.50}
 \mo_{\R}(-\Delta_{\theta,\R}+V_{\tau,\R})=0.
 \end{equation}
 It follows from Proposition \ref{3.9} that 
 \begin{align}
& \mo_{\R}(-\Delta_{\theta,\R}+V_{\tau,\R})=2\mo_{\C}(-\Delta_{\theta}+V_{\tau}),\lb{3.48}\\
& \mo_{\R}(-\Delta_{\theta,\R}+V_{1,\R})=2\mo_{\C}(-\Delta_{\theta}+V),\lb{3.49}
 \end{align}
and thus \eqref{morsmastperf} holds. If Hypothesis \ref{phyp} {\it (iii)} holds, then the family of operators of multiplication by $V_{t,\R}(x)$  satisfies the assumptions of Theorem \ref{mormas} {\it (ii)}, implying
 \begin{align}
& \mi \big(\Upsilon,\cX_{\vec{\theta}}\big)=\sum\limits_{\tau< t \leq 1}\dim_{\R}\big(\ker(-\Delta_{\theta,\R}+V_{t,\R})\big)\lb{mascalc},\\
& \mo_{\C}\big(-\Delta_{\vec{\theta}}+V\big)=-\frac 1 2\sum\limits_{\tau< t \leq 1}\dim_{\R}\big(\ker(-\Delta_{\vec{\theta},\R}+V_{t,\R})\big)\lb{mascalc2}.
 \end{align}
Since the left hand side of \eqref{mascalc2} is non-negative and the right hand side is non-positive we derive \eqref{mormastperf2}.
 
 Similarly, assuming Hypothesis \ref{phyp} {\it (iii)} we obtain
 \begin{align}
 \mo_{\C}\big(-\Delta_{\vec{\theta}}+V\big)&=\frac 1 2\sum\limits_{\tau< t \leq 1}\dim_{\R}\big(\ker(-\Delta_{\vec{\theta},\R}+V_{t,\R})\big)\lb{3.53}\\
& =\sum\limits_{\tau\leq t < 1}\dim_{\C}\big(\ker(-\Delta_{\vec{\theta}}+V_{t})\big).\lb{3.54}
\end{align}
Let $t_*\in[\tau,1)$ be a crossing, i.e. $$\dim_{\R}\big(\ker(-\Delta_{\vec{\theta},\R}+V_{t_*,\R})\big)=2\dim_{\C}\big(\ker(-\Delta_{\vec{\theta}}+V_{t_*})\big)~\not=~0,$$
that is, there exists $0\not=u\in\Htwom$ such that
\begin{equation}\lb{3.61a}
\left\{
\begin{array}{lcl}
-\Delta u(x) +t_*^2V(t_*x) u(x)=0,\ \text{in}\ \Ltwom,& &  \\
u^1_j=\bbM_{j}u^0_{j},\ \partial_ju^1=-\bbM_{j}\partial_{j}u^0,\ 1\leq j\leq n.       &  &  \\
\end{array}
\right.
\end{equation}
Changing variables $y:=t_*x$, one obtains the equivalent boundary value problem 
\begin{equation}\lb{3.61b}
\left\{
\begin{array}{lcl}
-\Delta u(y) +V_{t_*Q}(y) u(y)=0,\ \text{in}\ L^2(t_*Q, \C^m),& &  \\
u^1_j=\bbM_{j,t_*}u^0_{j},\ \partial_ju^1=-\bbM_{j,t_*}\partial_{j}u^0,\ 1\leq j\leq n,      &  &  \\
\end{array}
\right.
\end{equation}
and derives the inequality $\dim_{\C}\big(\ker(-\Delta_{\vec{\theta}}+V_{t_*})\big)\leq \dim_{\C}\big(\ker(-\Delta_{\vec{\theta},t}+V_{tQ})\big)$. 
Conversely, the change of variables $x:=t_*^{-1}y$ in \eqref{3.61b} implies \eqref{3.61a} and $\dim_{\C}\big(\ker(-\Delta_{\theta,t}+V_{tQ})\big)\leq \dim_{\C}\big(\ker(-\Delta_{\theta}+V_{t_*})\big)$ and, thus, \eqref{mormastperf3} holds. 

The last assertion in the theorem  follows from Theorem \ref{onedim} proved below.
\end{proof}

We are now in the position to apply Theorem \ref{mormas} to compute  the Morse index of the {\em periodic} Schr\"{o}dinger operator (i.e., when $\vec{\theta}=0$). The resulting formula contains the Maslov index of the flow corresponding to the right vertical side of the rectangle $\Gamma$ in Figure 1 and, in addition, a term corresponding to the 
to the lower horizontal part of the
rectangle. We begin with a computation of the Morse index of the operator corresponding lower horizontal part  in Figure 1.

\begin{proposition}
\lb{b1per} Assume $\vec{\theta}=0$, Hypothesis \ref{phyp} {\it(i)} and that $V$ is continuous at $0$, and the matrix $V(0)$ is invertible. Then for a sufficiently small $\tau\in(0,1)$ one has 
\begin{align}
&(i)\ 0\not\in\spec\big(-\Delta_{0,\R}+V_{\tau,\R}\big),\lb{b1perf1}\\
&(ii)\ \mo_{\R}\big(-\Delta_{0,\R}+V_{\tau,\R}\big)=2\mo \big(V(0)\big).\lb{b1perf2}
\end{align}
\end{proposition}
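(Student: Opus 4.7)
My plan is to treat this as a small-perturbation problem for the isolated eigenvalue $0$ of $-\Delta_{0,\R}$. With $\vec{\theta}=0$, Theorem \ref{operdef} gives $\spec(-\Delta_{0,\R})=\{\|A^{\top}k\|^2_{\R^n}\}_{k\in\bbZ^n}$, so $0$ is an isolated eigenvalue of $-\Delta_{0,\R}$ separated from the remaining (strictly positive) spectrum by the gap $\lambda_1:=\min_{k\neq 0}\|A^{\top}k\|^2_{\R^n}>0$; this minimum is positive because $A$ is invertible. By Corollary \ref{basis} the eigenspace $\cN_0:=\ker(-\Delta_{0,\R})$ coincides with the $2m$-dimensional subspace of constant $\R^{2m}$-valued functions, with orthogonal projection given by $P_0u=|Q|^{-1}\int_{Q}u\,d^nx$. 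Since $\|V_{\tau,\R}\|_{L^{\infty}(Q)}\leq \tau^2\|V_{\R}\|_{L^{\infty}(Q)}$, the perturbation has operator norm $O(\tau^2)$; restricting to $\tau$ so small that $\tau^2\|V_{\R}\|_{L^{\infty}(Q)}<\lambda_1/3$ immediately forces all of $\spec(L_\tau)\setminus(-\lambda_1/3,\lambda_1/3)$ to lie in $[2\lambda_1/3,\infty)$ (where $L_\tau:=-\Delta_{0,\R}+V_{\tau,\R}$), so only the $2m$ eigenvalues clustered near $0$ can contribute to the Morse index.

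The next step is to identify those $2m$ eigenvalues via standard analytic perturbation theory. The Riesz projection $P(\tau)$ associated to the spectral subspace of $L_\tau$ in $(-\lambda_1/3,\lambda_1/3)$ satisfies $P(\tau)\to P_0$ in norm as $\tau\to 0^+$ (Kato), and the leading-order asymptotics of the perturbed eigenvalues are governed by the reduced operator $P_0V_{\tau,\R}P_0$ on the finite-dimensional space $\cN_0$. A direct computation for $c\in\R^{2m}$ gives
\begin{equation*}
P_0V_{\tau,\R}P_0\,c=\tau^2\Bigl(|Q|^{-1}\int_{Q}V(\tau x)\,d^nx\Bigr)\otimes I_2\cdot c,
\end{equation*}
and continuity of $V$ at $0$ together with the dominated convergence theorem yield $\tau^{-2}P_0V_{\tau,\R}P_0\to V(0)\otimes I_2$ in operator norm on $\cN_0$ as $\tau\to 0^+$.

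To finish, I would invoke invertibility of $V(0)$, hence of $V(0)\otimes I_2$: for all sufficiently small $\tau$ the $2m$ eigenvalues of $L_\tau$ near $0$ are asymptotically $\tau^2\mu_l+o(\tau^2)$ with $\mu_l\in\spec(V(0)\otimes I_2)\setminus\{0\}$, so none of them can vanish, which establishes \eqref{b1perf1}. The number of these that are negative equals $\mo(V(0)\otimes I_2)$; since the spectrum of the symmetric matrix $V(0)\otimes I_2$ consists of each eigenvalue of $V(0)$ repeated twice, one has $\mo(V(0)\otimes I_2)=2\mo(V(0))$. Combining this with the sign-positivity of the remaining spectrum of $L_\tau$ in $[2\lambda_1/3,\infty)$ gives \eqref{b1perf2}. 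The main technical point I expect to require care is the quantitative Kato reduction, i.e.\ showing rigorously that eigenvalues of $L_\tau$ near $0$ match the eigenvalues of the reduced operator to leading order; this is the standard Riesz-projection argument for a small bounded self-adjoint perturbation, with intertwiners of the form already used in Remark \ref{rem2.4}.
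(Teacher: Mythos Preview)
Your approach is correct and genuinely different from the paper's. The paper first passes to the complex operator via Proposition~\ref{3.9}, then rescales by $\tau^{-2}$ and compares $\tau^{-2}(-\Delta_0)+\tau^{-2}V_\tau$ with the constant-potential operator $\tau^{-2}(-\Delta_0)+V(0)$, whose spectrum is computed explicitly in Lemma~\ref{constspec}. It then runs a Riesz-projection argument around a contour enclosing the \emph{negative} part of the spectrum and compares traces; to control $|\tr P_\tau-\tr P_\tau^0|$ it needs the a priori bound $\sup_{\tau\le\tau_0}\mo(-\Delta_0+V_\tau)<\infty$, which it obtains, somewhat unexpectedly, by invoking the abstract Morse--Maslov formula (Theorem~\ref{mormas}) together with the absence of crossings on $(0,\tau_0]$.

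You instead treat $V_{\tau,\R}$ directly as an $O(\tau^2)$ bounded perturbation of $-\Delta_{0,\R}$, localize to the $2m$-dimensional eigenspace at $0$ via the spectral gap $\lambda_1$, and read off the first-order asymptotics $\tau^2\mu_l+o(\tau^2)$ from the reduced operator $P_0V_{\tau,\R}P_0$ on the constants. This is more self-contained: the uniform Morse bound that the paper extracts from Theorem~\ref{mormas} is automatic in your setup, since all but $2m$ eigenvalues are pushed into $[2\lambda_1/3,\infty)$. The only point to make precise is the second-order error in the reduction (eigenvalues of $L_\tau$ in $(-\lambda_1/3,\lambda_1/3)$ differ from those of $P_0V_{\tau,\R}P_0$ by $O(\|V_{\tau,\R}\|^2/\lambda_1)=O(\tau^4)$), which is the standard Schur-complement/Feshbach estimate for a bounded self-adjoint perturbation; once stated, your asymptotics and both conclusions follow as you indicate.
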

\begin{proof}
By Propositiom \ref{3.9}, proving assertions \eqref{b1perf1}, \eqref{b1perf2} is equivalent to showing similar results for the complex periodic Shr\"{o}dinger operator, that is, to show that
\begin{align}
&(i')\ 0\not\in\spec\big(-\Delta_{0}+V_{\tau}\big),\no\\
&(ii')\ \mo_{\C}\big(-\Delta_{0}+V_{\tau}\big)=\mo \big(V(0)\big).\no
\end{align}
Moreover, since $0\not\in\spec\big(-\Delta_{0}+V_{\tau}\big)$ if and only if $0\not\in\spec\big(\tau^{-2}(-\Delta_{0})+\tau^{-2}V_{\tau}\big)$ and 
$\mo\big(\tau^{-2}(-\Delta_{0})+\tau^{-2}V_{\tau}\big)=\mo\big(-\Delta_{0}+V_{\tau}\big)$ for $\tau\in(0,1)$, it is enough to show
\begin{align}
	&(i'')\ 0\not\in\spec\big(\tau^{-2}(-\Delta_{0})+\tau^{-2}V_{\tau}\big),\lb{b1perf11}\\
	&(ii'')\ \mo_{\C}\big(\tau^{-2}(-\Delta_{0})+\tau^{-2}V_{\tau}\big)=\mo \big(V(0)\big),\lb{b1perf22}
\end{align}
for some $\tau\in(0,1)$.

By \cite[Theorem V.4.10]{K80} and the assumption on  continuity of $V$ we infer
\begin{align}
\dist&\big(\spec(\tau^{-2}(-\Delta_{0})+\tau^{-2}V_{\tau}),\spec( \tau^{-2}(-\Delta_{0})+V(0))\big)\no\\
&\leq \|V(\tau x)-V(0)\|_{L^{\infty}(Q)}\rightarrow 0,\text{\ as\ } \tau\rightarrow 0. \lb{3.19} 
\end{align} 
By Lemma \ref{constspec} with $\alpha=\tau^{-2}$ and $V_0=V(0)$ we have
\begin{align}
\spec\big(\tau^{-2}(-\Delta_{0})+V(0) \big)=\Big\{\tau^{-2}\lambda_k+\mu_j\Big|\lambda_k=\|A^{\top}k\|^2_{\R^n},\  k\in\Z^n,& \lb{3.22}\\
\mu_j\in \spec \left(V(0)\right),\ 1\leq j\leq m &\Big\}\no.
\end{align}
Since $V(0)$ is invertible, there exists $\delta>0$, such that the eigenvalues $\{\mu_1, \cdots, \mu_m\}=\spec\big(V(0)\big)$ can be ordered as follows,
\begin{align}
-\|V(0)\|_{\R^{m\times m}}&\leq \mu_1\leq\dots\leq \mu_{\mo(V(0))}<-\delta\no\\
&<0<\delta< \mu_{\mo(V(0))+1}\leq \cdots\leq \mu_m\leq \|V(0)\|_{\R^{m\times m}}.\lb{3.23}
\end{align}
By \eqref{3.22} and \eqref{3.23} for $0<\tau<2\pi\min\big\{\|A^Tk\|\delta^{-1/2}\big|k\in\Z^n\setminus\{0\}\big\}$ one has
\begin{equation}\lb{3.24}
\spec\big(\tau^{-2}(-\Delta_{0})+V(0)\big)\subset\big[-\|V(0)\|_{\R^{m\times m}}-\delta,-\delta\big]\cup\big[\delta,+\infty\big).
\end{equation}
In addition, for a positive $\tau_0<2\pi\min\big\{\|A^Tk\|\delta^{-1/2}\big|k\in\Z^n\setminus\{0\}\big\}$ so small that $\|V(\tau x)-V(0)\|_{L^{\infty}(Q)}<\delta/2 \text{\ for all\ }\tau<\tau_0$, from \eqref{3.19} and \eqref{3.24} one obtains
\begin{equation}\lb{3.25}
\spec\big(\tau^{-2}(-\Delta_{0})+\tau^{-2}V_{\tau}\big)\subset\big[-\|V(0)\|_{\R^{m\times m}}-\delta/2,-\delta/2\big]\cup\big[\delta/2,+\infty\big),
\end{equation}
implying \eqref{b1perf11} and \eqref{b1perf1}.

Next, let us consider the Riesz projections defined as
\begin{align}
&P_{\tau}:=\frac{1}{2\pi\bfi}\oint\limits_{C} \big(z-\tau^{-2}(-\Delta_{0})-\tau^{-2}V_{\tau} \big)^{-1}dz,\ \ \tau\in(0,\tau_0],\lb{rpr}\\
&P^{0}_{\tau}:=\frac{1}{2\pi\bfi}\oint\limits_{C} \big(z-\tau^{-2}(-\Delta_{0})-V(0) \big)^{-1}dz,\ \ \tau\in(0,\tau_0],
\end{align} 
where the contour $C$ is the rectangle with the vertices at the points $\pm\bfi\delta$ and $-\|V(0)\|_{\R^{m\times m}}-\delta\pm\bfi\delta$. Observe that for all $\tau\in(0,\tau_0]$ and $z\in\bbC$ we have
\begin{align}
\|(z-\tau^{-2}(-\Delta_{0})&-\tau^{-2}V_{\tau} \big)^{-1}\|_{\cB(\Ltwom)}\no\\&\leq\big(\dist\big(z,\spec(\tau^{-2}(-\Delta_{0})-\tau^{-2}V_{\tau}) \big)\big)^{-1}\lb{resnorm}\\
&\leq\big(\dist\big(C,\spec(\tau^{-2}(-\Delta_{0})-\tau^{-2}V_{\tau}) \big)\big)^{-1}\leq2/\delta,\no
\end{align}
and similarly 
\begin{equation}\lb{3.27}
\|(z-\tau^{-2}(-\Delta_{0})-V(0) \big)^{-1}\|_{\cB(\Ltwom)}\leq2/\delta.
\end{equation}
Then
\begin{align}
P_{\tau}-P_{\tau}^0 =\frac{1}{2\pi\bfi}\oint\limits_{C}\big(z-\tau^{-2}(-\Delta_{0})-&\tau^{-2}V_{\tau} \big)^{-1}\big(\tau^{-2}V_{\tau}-V(0)\big)\no\\
&\times(z-\tau^{-2}(-\Delta_{0})-V(0) \big)^{-1}dz.\lb{prdif}
\end{align}
From the continuity assumption on $V$ and \eqref{resnorm},\,\eqref{3.27},\, \eqref{prdif}  we obtain 
\begin{equation}
\lb{3.30}
\lim\limits_{\tau\rightarrow 0}\|P_{\tau}-P_{\tau}^0\|_{\cB(\Ltwom)}=0.
\end{equation}
To prove \eqref{b1perf2}, we claim that
\begin{equation}
\lb{morsfin}
c_0:=\sup\big\{ \mo\big(\tau^{-2}(-\Delta_{0})+\tau^{-2}V_{\tau} \big)\big|\, \tau\in (0,\tau_0]\big\}<+\infty.
\end{equation}
Assuming the claim, we note that $\tr P_{\tau}=\dim\ran(P_{\tau})=\mo(\tau^{-2}(-\Delta_{0})+\tau^{-2}V_{\tau})$ and $\tr P_{\tau}^0=\mo(V(0))$. Next, using the claim, we estimate
\begin{align}
|\tr P_{\tau}- \tr P_{\tau}^0|&=|\tr(P_{\tau}-P_{\tau}^0)|\no\\
&\leq \big(\dim\ran(P_{\tau})+\dim\ran(P_{\tau}^0)\big)\|P_{\tau}-P_{\tau}^0\|_{\cB(\Ltwom)}\no\\
&\leq\big(c_0+\mo(V(0))\big)\|P_{\tau}-P_{\tau}^0\|_{\cB(\Ltwom)}\rightarrow 0, \text{as} \ \tau\rightarrow 0.
\end{align}
Since the functions $\tau\mapsto\tr P_{\tau}$ and $\tau\mapsto\tr P_{\tau}^0$ take integer values we conclude that $\tr P_{\tau}=\tr P_{\tau}^0$ for all sufficiently small $\tau\in(0,1)$, implying the desired equality $\mo(\tau^{-2}(-\Delta_{0})+\tau^{-2}V_{\tau})=\mo(V(0))$.

We conclude the proof by showing claim \eqref{morsfin}. Fix any $\tau\in(0,\tau_0]$. First, since $$2\mo_{\C}\big(\tau^{-2}(-\Delta_{0})+\tau^{-2}V_{\tau} \big)=2\mo_{\C}\big(-\Delta_{0}+V_{\tau} \big)=\mo_{\R}\big(-\Delta_{0,\R}+V_{\tau,\R} \big),$$ it is enough to show
that $\sup\big\{ \mo_{\R}\big(-\Delta_{0,\R}+V_{\tau,\R} \big)\big| \tau\in (0,\tau_0]\big\}<+\infty$. Second, \eqref{3.25} and Proposition \ref{3.9} imply 
\begin{equation}\lb{3.34}
0\not \in \spec\big(-\Delta_{0,\R}+V_{\tau,\R}\big)\ \text{for all}\ \tau\in(0,\tau_0].
\end{equation}
Theorem \ref{operdef}, Lemma \ref{noegv} and Theorem \ref{bucp} imply the assumptions of Theorem \ref{mormas}  with $A_\cD=-\Delta_{0,\R}$ and $V_t=V_{t,\R}$. Using Theorem \ref{mormas} we therefore infer
\begin{align}
\mo_{\R}(-\Delta_{0,\R}&+V_{\tau,\R})-\mo_{\R}(-\Delta_{0,\R}+V_{1,\R})\no\\&=\mi(\gamma(\ker(-\Delta_{\max})+V_{t,\R})|_{\tau\leq t\leq 1},\gamma(\dom(-\Delta_{0,\R}))).\lb{3.33}
\end{align}
By  \eqref{3.34}, the path $t\mapsto \gamma(\ker(-\Delta_{\max})+V_{t,\R})|_{\tau\leq t\leq 1}$ does not have any crossings on the interval $t\in(0,\tau_0]$, thus
\begin{align}
\mi(\gamma&(\ker(-\Delta_{\max})+V_{t,\R})|_{\tau\leq t\leq 1},\gamma(\dom(-\Delta_{0,\R})))\no\\&=\mi(\gamma(\ker(-\Delta_{\max})+V_{t,\R})|_{\tau_0\leq t\leq 1},\gamma(\dom(-\Delta_{0,\R})))\lb{nocros}  
\end{align}
for all $\tau\leq \tau_0$.
Finally, \eqref{3.33} and \eqref{nocros} imply
\begin{align}
\mo_{\R}&(-\Delta_{0,\R}+V_{\tau,\R})=\mo_{\R}(-\Delta_{0,\R}+V_{1,\R})\no\\
&+\mi(\gamma(\ker(-\Delta_{\max})+V_{t,\R})|_{\tau_0\leq t\leq 1},\gamma(\dom(-\Delta_{0,\R}))),\lb{3.36}
\end{align}
and \eqref{morsfin} follows from  \eqref{3.36}.
\end{proof}

We are ready to formulate our principal result for the periodic case.

\begin{theorem}\lb{mormasper}
Assume Hypotheses \ref{phyp} (ii), (v) and that $\vec{\theta}=0$. Then the Morse index of the operator $ -\Delta_0+V$ and the Maslov index of the path $\Upsilon:[\tau,1]\rightarrow F\Lambda\big(\cX_{0}\big)$ defined by $t\mapsto\gamma\big(\ker\big(-\Delta_{\max}+V_{t,\R}\big)\big)$ for all $t\in[\tau,1]$ with $\tau\in(0,1]$ sufficiently small are related as follows:
\begin{equation}
\lb{mormasperf}
2\mo_{\C}\big(-\Delta_0+V\big)-2\mo\big(V(0)\big)=-{\mi \big(\Upsilon,\cX_{0}\big)}.
\end{equation}
Moreover, if Hypotheses \ref{phyp} (iii),(v) hold then 
\begin{equation}
\lb{mormasperf2}
\mo_{\C}\big(-\Delta_0+V\big)-\mo\big(V(0)\big)=-\sum\limits_{\tau< t \leq 1}\dim_{\C}\big(\ker(-\Delta_{0,t}+V_{tQ})\big),
\end{equation}
if  Hypotheses \ref{phyp} (iv),(v) hold then
\begin{equation}
\lb{mormasperf3}
\mo_{\C}\big(-\Delta_0+V\big)-\mo\big(V(0)\big)=\sum\limits_{\tau\leq t < 1}\dim_{\C}\big(\ker(-\Delta_{0,t}+V_{tQ})\big).
\end{equation}
\end{theorem}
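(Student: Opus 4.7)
The argument will parallel that of Theorem~\ref{mormastper}; the only genuine novelty is that for $\vec{\theta}=0$ the lower horizontal side of the rectangle $\Gamma$ carries a nonzero Morse-index contribution, which is supplied by Proposition~\ref{b1per}. I would apply the abstract Theorem~\ref{mormas} with the choices $A:=-\Delta_{\min}$, $A_{\cD}:=-\Delta_{0,\R}$, and $V_{t}:=V_{t,\R}$. Self-adjointness and compactness of the resolvent of $-\Delta_{0,\R}$ come from Theorem~\ref{operdef}; the absence of eigenvalues below $\lambda_{\infty}$ is Lemma~\ref{noegv}(i); and the unique continuation requirement $\ker(A^{*}+V_{t,\R}-\lambda)\cap\cD_{\min}=\{0\}$ is handled by Theorem~\ref{bucp} exactly as in the proof of Theorem~\ref{mormastper}. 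Equation \eqref{mff1} then reads
\begin{equation*}
\mo_{\R}\bigl(-\Delta_{0,\R}+V_{\tau,\R}\bigr)-\mo_{\R}\bigl(-\Delta_{0,\R}+V_{1,\R}\bigr)=\mi(\Upsilon,\cX_{0}).
\end{equation*}

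For the left endpoint I would invoke Proposition~\ref{b1per}, which uses Hypothesis~\ref{phyp}(v) (invertibility of $V(0)$) to give, for all sufficiently small $\tau\in(0,1)$, the equalities $\mo_{\R}(-\Delta_{0,\R}+V_{\tau,\R})=2\mo(V(0))$ and $0\notin\spec(-\Delta_{0,\R}+V_{\tau,\R})$. Proposition~\ref{3.9} then converts real into complex dimensions via $\mo_{\R}(-\Delta_{0,\R}+V_{1,\R})=2\mo_{\C}(-\Delta_{0}+V)$. Substituting these two evaluations into the identity above immediately yields \eqref{mormasperf}.

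To prove the monotone formulas \eqref{mormasperf2} and \eqref{mormasperf3}, I would first compute
\begin{equation*}
\dot{V}_{t,\R}(x)=\bigl(2tV(tx)+t^{2}\nabla V(tx)\cdot x\bigr)\otimes I_{2},
\end{equation*}
so that under Hypothesis~\ref{phyp}(iii) the multiplication operator $\dot{V}_{t,\R}$ is pointwise positive definite (hence positive definite on the finite-dimensional subspace $\ker(-\Delta_{0,\R}+V_{t,\R})$), while under Hypothesis~\ref{phyp}(iv) it is negative definite. Applying \eqref{mff2}, respectively \eqref{mff3}, from Theorem~\ref{mormas} and combining with the endpoint computation and Proposition~\ref{3.9} delivers, under (iii),
\begin{equation*}
2\mo(V(0))-2\mo_{\C}(-\Delta_{0}+V)=2\sum_{\tau<t\leq 1}\dim_{\C}\ker\bigl(-\Delta_{0}+V_{t}\bigr),
\end{equation*}
and the analogous identity with reversed sign and summation range $[\tau,1)$ under (iv). Finally, the rescaling $y=tx$, performed exactly as in the last part of the proof of Theorem~\ref{mormastper}, identifies $\ker(-\Delta_{0}+V_{t})$ with $\ker(-\Delta_{0,t}+V_{tQ})$, producing \eqref{mormasperf2} and \eqref{mormasperf3}.

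The only nontrivial step will be controlling the Morse index as $\tau\to 0^{+}$; this is precisely what Proposition~\ref{b1per} provides via a Riesz-projection comparison of $\tau^{-2}(-\Delta_{0})+\tau^{-2}V_{\tau}$ with $\tau^{-2}(-\Delta_{0})+V(0)$, where Hypothesis~\ref{phyp}(v) is essential to guarantee that $0$ lies in a uniform spectral gap for all small $\tau$. Once that ingredient is in hand, the remainder of the proof is a direct specialization of Theorem~\ref{mormas} in the spirit of Theorem~\ref{mormastper}.
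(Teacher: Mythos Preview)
Your proposal is correct and follows essentially the same approach as the paper: the paper's own proof simply says to employ \eqref{b1perf2} and Theorem~\ref{mormas} and repeat the argument of Theorem~\ref{mormastper}, and your outline spells out precisely those steps---applying Theorem~\ref{mormas} to $A_{\cD}=-\Delta_{0,\R}$, using Proposition~\ref{b1per} for the lower endpoint contribution, converting real to complex via Proposition~\ref{3.9}, and rescaling as in \eqref{3.61a}--\eqref{3.61b}.
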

\begin{proof}
Employing \eqref{b1perf2} and Theorem \ref{mormas}, one repeats the proof of Theorem \ref{mormastper}  and derives assertions
 \eqref{mormasperf}, \eqref{mormasperf2} and \eqref{mormasperf3}.
\end{proof}
\begin{remark}\lb{rmk3.16}
	In fact, equalities  \eqref{mormastperf2} and \eqref{mormasperf2} (correspondingly, \eqref{mormastperf3} and \eqref{mormasperf3}) hold assuming that Hypothesis \ref{phyp} {\it (iii)} is replaced by the following weaker assumption: $V\in C^1(Q;\R^{m\times m})$  and $\min\spec\big(2tV(tx)+t^2\nabla V(tx)x\big)>0$ (correspondingly $\min\spec\big(2tV(tx)+t^2\nabla V(tx)x\big)<0$) for each conjugate point $t_*\in(0,1]$. In other words, it is enough to assume that operator of multiplication by $2t_*V(t_*x)+t_*^2\nabla V(t_*x)x$ is positive (correspondingly negative) definite on the finite dimensional space $\ker\big(-\Delta_{\vec{\theta}}+V_{t_*}\big)$, for those $t_*$ where it is non-trivial.  
\end{remark}

The computation of the Maslov index in the space of boundary values looks rather abstract. The next theorem contains a fairly explicit formula for the Maslov form at crossings in terms of the potential $V$. This computation is of independent interest and is  used in Theorem \ref{onedim}.
\begin{theorem}\lb{3.19teor}
	Assume Hypothesis \ref{phyp} {\it (ii)}. Then for $u_{\R}:=(u_1,\dots, u_{2m})\in\ker(-\Delta_{\vec{\theta},\R}+V_{t_*,\R}(x))$ one has
	\begin{align}
	&\cQ_{t_*,\cX_{\vec{\theta}}}([u_{\R}],[u_{\R}])=\int\limits_{Q}\Big(u_{\R}(x),\frac{\partial(V_{t,\R})}{\partial t}\Big|_{{t={t_{*}}}}u_{\R}(x)\Big)_{\R^{2m}}d^nx\lb{t3.21}\\
	&\quad= t_*\Re\int\limits_{\partial Q}(x,\vec{\nu})_{\R^n}\big({\bf u}(x),V(t_*x){\bf u}(x)\big)_{\C^m}d^{n-1}x\no\\
	&\qquad+t_*^{-1}\int\limits_{\partial Q}(x,\vec{\nu})_{\R^n}\|\nabla {\bf u}(x)\|_{\C^n}^2 d^{n-1}x-2t_*^{-1}\Re\int\limits_{\partial Q}\big(\nabla {\bf u}(x) x, \nabla {\bf u}(x)\vec{\nu}\big)_{\C^m}d^{n-1}x,\no
	\end{align}
	where ${\bf u}(x)=(u_1(x)+\bfi u_2(x),\dots, u_{2m-1}(x)+\bfi u_{2m}(x))\in \C^m.$
\end{theorem}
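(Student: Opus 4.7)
The plan splits into three computational steps: first applying the abstract crossing-form formula from Lemma \ref{g2} in the present concrete setting, then rewriting the result in complex notation, and finally carrying out a Pohozaev--Rellich identity to exhibit the boundary representation. The first equality in \eqref{t3.21} is an immediate specialization of the crossing-form computation inside the proof of Lemma \ref{g2}, taken with $\cH=L^2(Q;\R^{2m})$, $A_\cD=-\Delta_{\vec\theta,\R}$, and the $C^1$ path of bounded self-adjoint operators $V_t$ acting by multiplication by $V_{t,\R}(x)$. The formula $\cQ_{t_*}([u_\R],[u_\R]) = (u_\R,\dot V_t|_{t_*}u_\R)_\cH$ there is derived pointwise from the definition and needs no sign assumption on $\dot V_t$, so it applies verbatim here.

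For the transition to the complex setting, I first compute $\partial V_{t,\R}/\partial t|_{t=t_*}=[2t_*V(t_*x)+t_*^2\nabla V(t_*x)\cdot x]\otimes I_2$ by the chain rule. Since $V(t_*x)$ has real symmetric values, the correspondence ${\bf u}=\cC u_\R$ in \eqref{C1}--\eqref{Cinv2} gives, via a direct expansion of both sides, the pointwise identity $(u_\R,(W\otimes I_2)u_\R)_{\R^{2m}}=({\bf u},W{\bf u})_{\C^m}$ for every real symmetric $W$. This recasts the first equality as $\int_Q({\bf u},[2t_*V(t_*x)+t_*^2\nabla V(t_*x)\cdot x]{\bf u})_{\C^m}\,d^nx$, and we record that ${\bf u}$ solves $-\Delta{\bf u}+t_*^2V(t_*x){\bf u}=0$ subject to the $\vec\theta$-quasiperiodic boundary conditions of Theorem \ref{operdef1}.

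The boundary representation is then obtained by a standard Pohozaev--Rellich manipulation. The plan is to take the $\C^m$ inner product of the eigenvalue equation with the multiplier $x\cdot\nabla{\bf u}$, integrate over $Q$, and take real parts. Two integrations by parts on the Laplacian (using the pointwise identity $\Re\sum_i \partial_i u_l\overline{\partial_i(x\cdot\nabla u_l)}=|\nabla u_l|^2+\tfrac12 x\cdot\nabla|\nabla u_l|^2$) yield a coefficient $(2-n)/2$ on $\int_Q\|\nabla{\bf u}\|^2$ together with the boundary terms $\tfrac12\int_{\partial Q}(x,\vec\nu)\|\nabla{\bf u}\|^2$ and $-\Re\int_{\partial Q}(x\cdot\nabla{\bf u},\nabla{\bf u}\cdot\vec\nu)$. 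For the potential term, Hermiticity of $V_{t_*}$ supplies $\partial_k({\bf u},V_{t_*}{\bf u})=2\Re(\partial_k{\bf u},V_{t_*}{\bf u})+({\bf u},\partial_kV_{t_*}{\bf u})$; contracting with $x_k$ and integrating gives the analogue for $\Re\int_Q(x\cdot\nabla{\bf u},V_{t_*}{\bf u})$. Setting their sum to zero, eliminating $\int_Q({\bf u},V_{t_*}{\bf u})$ via testing the equation against ${\bf u}$ (which yields $\int_Q({\bf u},V_{t_*}{\bf u})=-\int_Q\|\nabla{\bf u}\|^2+\int_{\partial Q}({\bf u},\nabla{\bf u}\cdot\vec\nu)$), and combining with $2t_*\int_Q({\bf u},V(t_*x){\bf u})=2t_*^{-1}[-\int_Q\|\nabla{\bf u}\|^2+\int_{\partial Q}({\bf u},\nabla{\bf u}\cdot\vec\nu)]$, all bulk integrals cancel and the three claimed boundary terms emerge, alongside a residual $\tfrac{2-n}{t_*}\int_{\partial Q}({\bf u},\nabla{\bf u}\cdot\vec\nu)$.

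The main obstacle is to verify the vanishing of this residual, as otherwise the formula would carry a dimension-dependent correction. This is exactly where the $\vec\theta$-quasiperiodicity intervenes: parametrizing opposite faces $\partial Q_j^0$ and $\partial Q_j^1$ by $x=y+a_j$, the conditions ${\bf u}(y+a_j)=e^{2\pi\bfi\theta_j}{\bf u}(y)$ and $\nabla{\bf u}(y+a_j)=e^{2\pi\bfi\theta_j}\nabla{\bf u}(y)$ force the Hermitian pairing $({\bf u},\nabla{\bf u}\cdot a_j/|a_j|)$ to agree at corresponding points (the phases $e^{\pm2\pi\bfi\theta_j}$ cancel), while the outward unit normals $\vec\nu=\mp a_j/|a_j|$ have opposite signs. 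Summing over all pairs of opposite faces then gives $\int_{\partial Q}({\bf u},\nabla{\bf u}\cdot\vec\nu)=0$, so the residual term disappears and \eqref{t3.21} follows.
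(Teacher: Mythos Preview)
Your argument is correct and follows essentially the same route as the paper: both proofs invoke the crossing-form computation from Lemma~\ref{g2} for the first equality, pass to the complex function ${\bf u}$, and then derive the boundary representation via a Pohozaev--Rellich type integration by parts, using the vanishing $\int_{\partial Q}({\bf u},\nabla{\bf u}\,\vec\nu)_{\C^m}\,d^{n-1}x=0$ from the $\vec\theta$-periodic boundary conditions (the paper's \eqref{bound2}) to kill the dimension-dependent bulk/boundary remainder. The only organizational difference is that the paper first rescales to $t_*Q$ and splits the integrand via $2V+\nabla V\cdot y=(2-n)V+\operatorname{div}(Vy)$ (its terms $I$ and $II$), whereas you work directly on $Q$ with the multiplier $x\cdot\nabla{\bf u}$; these are two bookkeepings of the same identity.
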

\begin{proof}
	The calculation of the form in Lemma \ref{g2} yields the first equality in \eqref{t3.21}. In order to show the second equality, we notice that ${\bf u}\in\ker(-\Delta_{\vec{\theta}}+t_{*}^2V(t_{*}))$, i.e. ${\bf u}\in\dom(-\Delta_{\vec{\theta}})$ and $-\Delta {\bf u}(x)+t_{*}^2V(t_{*}x){\bf u}(x)=0,$ a.e. $x\in Q$, that is
	\begin{align}
	& t_{*}^{-2}\Delta {\bf u}(x/t_{*})=V(x){\bf u}(x/t_{*}), \text{\ a.e\ } x\in t_*Q,\lb{weaksol1}
	\end{align}
	and, since ${\bf u}$ satisfies the $\vec{\theta}$-periodic boundary conditions we have
	\begin{align}
	&\int\limits_{\partial Q}\big({\bf u}(x),{\nabla {\bf u}(x)}\vec{\nu}\big)_{\C^m}d^{n-1}x=0.\lb{bound2}   
	\end{align}
	We will denote by $\nabla V(x)x$ the $m\times m$ matrix $\{(\nabla V_{kl}(x))x\}_{1\leq k,l\leq m}$, and  will use notation $\text{div}(V(x)x)$ for $\{\text{div}(V_{kl}(x)x)\}_{1\leq k,l\leq m}$. Then we have 
	\begin{align}
	\int\limits_{Q}\Big({\bf u}(x)&,\left(2t_*V(t_*x)+t_*^2\nabla V(t_*x)x\right){\bf u}(x)\Big)_{\C^m}dx\no\\
	&=t_*^{1-n}\int\limits_{t_*Q}\Big({\bf u}\Big(\frac y {t_*}\Big),\left(2V(y)+\nabla V(y)y\right){\bf u}\Big(\frac y {t_*}\Big)\Big)_{\C^m}d^ny\no\\
	&=t_*^{1-n}(2-n)\int\limits_{t_*Q}\Big({\bf u}\Big(\frac y {t_*}\Big),V(y){\bf u}\Big(\frac y{t_*}\Big)\Big)_{\C^m}d^ny\no\\
	&+t_*^{1-n}\int\limits_{t_*Q}\Big({\bf u}\Big(\frac y {t_*}\Big),\text{div}(V(y)y){\bf u}\Big(\frac y {t_*}\Big)\Big)_{\C^m}d^ny=:t_*^{1-n}(I+II).
	\end{align}
	Using \eqref{weaksol1} one obtains, by Green's formula and \eqref{bound2}, the equality 
	\begin{align}
	I&=\frac{2-n}{t_*^2}\int\limits_{t_*Q}\Big({\bf u}\Big(\frac y {t_*}\Big),\Delta {\bf u}\Big(\frac y{t_*}\Big)\Big)_{\C^m}d^ny=
	(2-n)t_*^{n-2}\int\limits_{Q}\big({\bf u}(x),\Delta {\bf u}(x)\big)_{\C^m}d^nx\no\\
	&=(2-n)t_*^{n-2}\Big(\int\limits_{\partial Q}\big({\bf u}(x),\nabla {\bf u}(x)\vec{\nu}\big)_{\C^m}d^{n-1}x-
	\int\limits_{Q}\big(\nabla {\bf u}(x),\nabla {\bf u}(x)\big)_{\C^m}d^nx\Big)\no\\
	&=(n-2)t_*^{n-2}\int\limits_{Q}\big(\nabla {\bf u}(x),\nabla {\bf u}(x)\big)_{\C^m}d^nx\lb{I}.
	\end{align}
Similarly, using the divergence theorem, \eqref{weaksol1} and Green's formula, we compute:
	\begin{align}
	& II=\sum\limits_{1\leq k,l\leq m} \ \int\limits_{{t_*}Q} {\bf u}_k\Big(\frac y {t_*}\Big)\overline{{\bf u}_l\Big(\frac y {t_*}\Big)}\text{div}(V_{kl}(y)y)d^ny\no\\
	&=t_{*}^{n}\Big(\int\limits_{\partial Q}(x,\vec{\nu})_{\R^n}\big({\bf u}(x),V(t_*x){\bf u}(x)\big)_{\C^m}d^{n-1}x\no\\&\hspace{13em}-
	2\Re\int\limits_{Q}\big(\nabla {\bf u}(x) x, V(t_*x){\bf u}(x)\big)_{\C^m}d^nx\Big)\no\\
	&=t_{*}^{n}\Big(\int\limits_{\partial Q}(x,\vec{\nu})_{\R^n}\big({\bf u}(x),V(t_*x){\bf u}(x)\big)_{\C^m}d^{n-1}x\no\\&\hspace{13em}-
	2t_*^{-2}\Re\int\limits_{Q}\big(\nabla {\bf u}(x) x, \Delta {\bf u}(x)\big)_{\C^m}d^nx\Big)\lb{IIh}\\
	&=t_{*}^{n}\Big(\int\limits_{\partial Q}(x,\vec{\nu})_{\R^n}\big({\bf u}(x),V(t_*x){\bf u}(x)\big)_{\C^m}d^{n-1}x\no\\&\hspace{13em}-
	2t_*^{-2}\Re\int\limits_{\partial Q}\big(\nabla {\bf u}(x) x, \nabla {\bf u}(x)\vec{\nu}\big)_{\C^m}d^{n-1}x\no\\
	&+2t_*^{-2}\Re\sum_{i=1}^{m}\int\limits_{Q}\big(\nabla (\nabla {\bf u}_i(x) x), \nabla {\bf u}_i(x)\big)_{\C^n}d^nx\Big)\no\\
	&=t_*^{n}\Big(\int\limits_{\partial Q}(x,\vec{\nu})_{\R^n}\big({\bf u}(x),V(t_*x){\bf u}(x)\big)_{\C^m}d^{n-1}x\no\\&\hspace{13em}
	-2t_*^{-2}\Re\int\limits_{\partial Q}\big(\nabla {\bf u}(x) x, \nabla {\bf u}(x)\vec{\nu}\big)_{\C^m}d^{n-1}x\no\\
	&+t_*^{-2}\sum_{i=1}^{m}\int\limits_{Q}x\nabla(\|\nabla {\bf u}_i(x)\|_{\C^n}^2)+2\|\nabla {\bf u}_i(x)\|^2_{\C^n}\ d^nx\Big)=\lb{III}\\
	&=t_{*}^{n}\Big(\int\limits_{\partial Q}(x,\vec{\nu})_{\R^n}\big({\bf u}(x),V(t_*x){\bf u}(x)\big)_{\C^m}d^{n-1}x\no\\&\hspace{0em}
	+2t_*^{-2}\Re\int\limits_{\partial Q}\big(\nabla {\bf u}(x) x, \nabla {\bf u}(x)\vec{\nu}\big)_{\C^m}d^{n-1}x+(2-n)t_*^{-2}\sum_{i=1}^{m}\int\limits_{Q}\|\nabla {\bf u}_i(x)\|_{\C^n}^2\ d^nx\no\\
	& \hspace{13em}+	t_*^{-2}\sum_{i=1}^{m}\int\limits_{\partial Q}(x,\vec{\nu})_{\R^n}\|\nabla {\bf u}_i(x)\|_{\C^n}^2 d^{n-1}x\Big).\lb{II}
	\end{align}
	In \eqref{III} we used the relation 
	$$\nabla (\nabla \overline{v\left(x\right)}x)\nabla v(x)+\nabla (\nabla {v\left(x\right)}x)\nabla \overline{v(x)}=x\nabla(|\nabla v(x)|^2)+2|\nabla v(x)|^2,$$
	for all $v\in H^2(Q)$.
	Combining \eqref{I} and \eqref{II} with
	\begin{align}
	\int\limits_{Q}\Big(&u_{\R}(x),\frac{\partial(V_{t,\R}(x))}{\partial t}\Big|_{{t={t_{*}}}}u_{\R}(x)\Big)_{\R^{2m}}d^nx\no\\&
	=\Re\int\limits_{Q}\Big({\bf u}(x),\frac{\partial(V_t(x))}{\partial t}\Big|_{{t={t_{*}}}}{\bf u}(x)\Big)_{\C^{m}}d^nx,
	\end{align}
	one obtains \eqref{t3.21}.
\end{proof}
\begin{remark}
Assuming the hypotheses in Theorem \ref{3.19teor}, and that the vectors $a_j$ from \eqref{dfnL} satisfy $(a_i, a_j)_{\R^n}=0$ for $i\not=j$, one has
\begin{align}
&\cQ_{t_*,\cX_{\vec{\theta}}}([u_{\R}],[u_{\R}])=\int\limits_{Q}\Big(u_{\R}(x),\frac{\partial(V_{t,\R}(x))}{\partial t}\Big|_{{t={t_{*}}}}u_{\R}(x)\Big)_{\R^{2m}}d^nx\lb{r3.20}\\
&= t_*\sum_{k=1}^{n}\Re\int\limits_{{\partial Q}_k^0}(x,\vec{\nu})_{\R^n}\big({\bf u}(x),V(t_*x)+V(t_*(x+a_k)){\bf u}(x)\big)_{\C^{m}}d^{n-1}x\no\\
&+t_*^{-1}\int\limits_{\partial Q}(x,\vec{\nu})_{\R^n}\sum_{i=1}^{m}\|\nabla {\bf u}_i(x)\|_{{\R^{n}}}^2 d^{n-1}x-2t_*^{-1}\Re\int\limits_{\partial Q}\big(\nabla {\bf u}(x) x, \nabla {\bf u}(x)\vec{\nu}\big)_{\C^{m}}d^{n-1}x.\no
\end{align}
\end{remark}
In the one dimensional case we recover a result from \cite{JLM}.
\begin{theorem}\lb{onedim}
	Assume Hypothesis \ref{phyp} {\it (ii)} and that $n=1$. Suppose that $t_*$ is a crossing. If the matrix $V(t_*a_1)$ is negative definite, then for all $u_{\R}:=(u_1,\dots, u_{2m})\in\ker\big(\big(-\frac{d^2}{dx^2}\big)_{\vec{\theta},\R}+V_{t_{*},\R}\big)$ one has
	\begin{align}
	&\cQ_{t_*,\cX_{\vec{\theta}}}([u_{\R}],[u_{\R}])=\int\limits_{Q}\big(u_{\R}(x),\frac{\partial(V_{t,\R}(x))}{\partial t}\big|_{{t={t_{*}}}}u_{\R}(x)\big)_{\R^{2m}}dx<0.\lb{3.60}
	\end{align}
	That is, the crossings of the path $t\mapsto\gamma\big(\ker\big(-\partial_x^2\big)_{\max}+V_{t,\R}\big)$ as $t\in[\tau,1]$ with any $\tau>0$, are negative. Therefore, assertion \eqref{mormastperf3} holds if $\vec{\theta}\not=0$, and \eqref{mormasperf3} holds if $\vec{\theta}=0$ and $V(0)$ is invertible. 
\end{theorem}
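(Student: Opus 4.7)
The plan is to apply Theorem \ref{3.19teor} and specialize the boundary integral formula \eqref{t3.21} to the one-dimensional case $Q=[0,a_1]$, then upgrade the resulting non-positivity to strict negativity by means of ODE uniqueness. The outward unit normal satisfies $\vec{\nu}(0)=-1$ and $\vec{\nu}(a_1)=1$, so $(x,\vec{\nu})_{\R}$ vanishes at $x=0$ and equals $a_1$ at $x=a_1$. Consequently each of the three boundary integrals in \eqref{t3.21} reduces to its value at the right endpoint. Using ${\bf u}(a_1)=\e^{2\pi\bfi\theta_1}{\bf u}(0)$ (and the fact that $V$ is real symmetric so the form $({\bf u},V{\bf u})_{\C^m}$ is already real), the first term collapses to $t_*a_1({\bf u}(0),V(t_*a_1){\bf u}(0))_{\C^m}$. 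Since in one dimension $\|\nabla{\bf u}(x)\|_{\C^n}^2=\|{\bf u}'(x)\|_{\C^m}^2$ and $\Re(\nabla{\bf u}(x)x,\nabla{\bf u}(x)\vec{\nu})_{\C^m}=x\vec{\nu}\|{\bf u}'(x)\|_{\C^m}^2$, the second and third terms combine to $t_*^{-1}a_1\|{\bf u}'(a_1)\|_{\C^m}^2-2t_*^{-1}a_1\|{\bf u}'(a_1)\|_{\C^m}^2=-t_*^{-1}a_1\|{\bf u}'(a_1)\|_{\C^m}^2$. Summing,
\begin{equation*}
\cQ_{t_*,\cX_{\vec{\theta}}}([u_\R],[u_\R])=t_*a_1\big({\bf u}(0),V(t_*a_1){\bf u}(0)\big)_{\C^m}-t_*^{-1}a_1\|{\bf u}'(a_1)\|_{\C^m}^2,
\end{equation*}
and negative definiteness of $V(t_*a_1)$ makes both terms non-positive.

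Next I would argue strict negativity on nonzero classes by contradiction. If $[u_\R]\neq 0$ and $\cQ_{t_*,\cX_{\vec{\theta}}}([u_\R],[u_\R])=0$, then negative definiteness of $V(t_*a_1)$ forces ${\bf u}(0)=0$, while the second contribution forces ${\bf u}'(a_1)=0$. The quasi-periodic boundary conditions encoded in $\dom(-\Delta_{\vec{\theta}})$ (Theorem \ref{operdef1}) read, after unwinding the outward-normal signs, ${\bf u}(a_1)=\e^{2\pi\bfi\theta_1}{\bf u}(0)$ and ${\bf u}'(a_1)=\e^{2\pi\bfi\theta_1}{\bf u}'(0)$, so ${\bf u}(a_1)=0$ and ${\bf u}'(0)=0$ as well. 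Because ${\bf u}$ satisfies the linear second-order system $-{\bf u}''+t_*^2V(t_*x){\bf u}=0$ on $[0,a_1]$ with zero Cauchy data at $x=0$, uniqueness for linear ODE systems gives ${\bf u}\equiv 0$ and hence $u_\R\equiv 0$, contradicting $[u_\R]\neq 0$.

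Having established negativity of every crossing form, I would then feed this information into the Maslov-index machinery. Theorem \ref{masform} turns each interior crossing into a contribution $-\dim(\Upsilon(t_*)\cap\gamma(\cD))$, and the one-sided versions handle the endpoints $t=\tau$ and $t=1$. Summing along $[\tau,1]$ exactly as in the proof of Lemma \ref{g2n} gives
\begin{equation*}
\mi(\Upsilon,\cX_{\vec{\theta}})=-\sum\limits_{\tau\leq t<1}\dim_{\R}\ker(-\Delta_{\vec{\theta},\R}+V_{t,\R}).
\end{equation*}
Combining with \eqref{morsmastperf} (when $\vec{\theta}\neq 0$) or with \eqref{mormasperf} together with the boundary count \eqref{b1perf2} (when $\vec{\theta}=0$ and $V(0)$ is invertible), and translating between real and complex dimensions via Proposition \ref{3.9}, yields \eqref{mormastperf3} and \eqref{mormasperf3} respectively.

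The main obstacle is the careful bookkeeping of outward-normal signs in the Neumann-type condition $\partial_j u^1=-\bbM_j\partial_j u^0$, which must be reconciled with the fact that $\partial_j u^s$ is defined using the outward normal on each face; once that is settled, the computation of the three boundary integrals reduces to point evaluations at $x=a_1$, and the remaining argument is a textbook application of uniqueness for linear ODE systems together with the abstract framework already developed in Section \ref{sec:2} and the proof of Theorem \ref{mormastper}.
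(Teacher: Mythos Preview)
Your proposal is correct and follows essentially the same route as the paper: specialize \eqref{t3.21} to $n=1$, observe that only the right endpoint $x=a_1$ contributes, and read off that both resulting terms are non-positive when $V(t_*a_1)$ is negative definite; the final assertions then follow from Theorems \ref{mormastper} and \ref{mormasper} (cf.\ Remark \ref{rmk3.16}). You actually supply more than the paper does: your ODE-uniqueness argument for strict negativity (ruling out simultaneously vanishing ${\bf u}(0)$ and ${\bf u}'(a_1)$ via the quasi-periodic conditions and Cauchy uniqueness) fills a step the paper leaves implicit.
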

\begin{proof}
	By \eqref{t3.21} with $n=1$ we have
	\begin{align}
	\int\limits_{Q}&\Big(u_{\R}(x),\frac{\partial(V_{t,\R}(x))}{\partial t}\Big|_{{t={t_{*}}}}u_{\R}(x)\Big)_{2m}dx\no\\
	&= t_*\Re\big({\bf u}(t_*a_1),V(t_*a_1){\bf u}(t_*a_1)\big)_{\C^m}-t_*^{-1}\sum\limits_{i=1}^{m}|{\bf u}_i'(t_*a_1)|^2<0\no.
	\end{align}
	The rest follows from Theorems \ref{mormastper} and \ref{mormasper}.
\end{proof}

\subsection{Schr\"{o}dinger operators with Dirichlet and Neumann boundary conditions} As yet another application of the abstract results from Section \ref{sec:2}, in this subsection  we give an alternative proof of the celebrated Morse-Smale Index Theorem, and its analogue for the Neumann boundary conditions recently discussed in \cite{CJLS}. Let $-\Delta_{D,\Omega}$ denote the Dirichlet Laplacian.
\begin{theorem}\lb{dir}
	Let $\Omega$ be open bounded star-shaped Lipschitz domain in $\R^n$ and assume that $V\in C^1(\R^n, \R^{m\times m})$ and $V(x)=V(x)^{\top}$, $x\in\Omega$. Denote $\cX_D=\gamma(\dom\big(-\Delta_{D,\Omega})\big)$. Then the Morse index of the Schr\"{o}dinger operator with the Dirichlet boundary conditions, 
	\begin{align}L_{D}u&:=-\Delta_{D,\Omega}u+Vu,\no\\  \dom(-\Delta_{D,\Omega})&=\{u\in H^1(\Omega,\C^m)\big| \Delta\in L^2(\Omega, \C^m),\ \gamma_{D}u=0 \text{\ in\ } H^{1/2}(\Omega,\C^m)\}\no,\end{align} 
	and,  for $\tau\in(0,1]$ sufficiently small, the Maslov index of the path $\Upsilon:[\tau,1]\rightarrow F\Lambda(\cX_D)$, defined by $t\mapsto\gamma\big(\ker\big(-\Delta_{\max}+V_{t,\R}\big)\big)$ for all $t\in[\tau,1]$ are related as follows:
	\begin{equation}
	\lb{dirf1}
	2\mo_{\C}\big(-\Delta_{D,\Omega}+V\big)=-{\mi \big(\Upsilon,\cX_D\big)}.
	\end{equation}
	If, in addition, $\Omega$ has $C^{1,r}$-boundary for some $1/2<r<1$ and $n\geq 2$ then 
	\begin{equation}
	\lb{dirf2}
	\mo_{\C}\big(-\Delta_{D,\Omega}+V\big)=\sum\limits_{\tau\leq t < 1}\dim_{\C}\big(\ker(-\Delta_{D,\Omega}+t^2V(tx))\big).
	\end{equation}
\end{theorem}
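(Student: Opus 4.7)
The plan is to mirror the strategy of Theorems \ref{mormastper} and \ref{mormasper}, replacing the $\vec\theta$-periodic extension by the Dirichlet one. I would set $A:=-\Delta_{\min}$ on $L^2(\Omega;\R^{2m})$, $A_\cD:=-\Delta_{D,\Omega,\R}$ (the real Dirichlet Laplacian, defined as a direct sum of $2m$ scalar copies), and $V_t:=V_{t,\R}(x)=t^2V(tx)\otimes I_2$. The star-shapedness of $\Omega$ with respect to the origin guarantees $tx\in\Omega$ whenever $x\in\Omega$ and $t\in(0,1]$, so $V_{t,\R}$ is a well-defined, bounded, self-adjoint multiplication operator depending $C^1$ on $t$. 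The Dirichlet Laplacian has compact resolvent and nonnegative spectrum, so with $\lambda_\infty:=-\sup_{t\in[0,1]}\|V_{t,\R}\|_{L^\infty(\Omega)}-1$ one has $\ker(A_\cD+V_t-\lambda)=\{0\}$ for $\lambda<\lambda_\infty$, while the weak unique continuation property (Theorem \ref{bucp}, used exactly as in the proof of Theorem \ref{mormastper}) delivers $\ker(A^*+V_t-\lambda)\cap\cD_{\min}=\{0\}$ on $[\lambda_\infty,0]$.

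For the first claim, the bound $\|V_{\tau,\R}\|_{L^\infty}\le\tau^2\|V\|_{L^\infty(\Omega)}$ combined with the positivity of the first Dirichlet eigenvalue and the spectral perturbation estimate \cite[Theorem V.4.10]{K80} (applied as in Lemma \ref{noegv}) yields $\mo_\R(-\Delta_{D,\Omega,\R}+V_{\tau,\R})=0$ for all sufficiently small $\tau$. Theorem \ref{mormas}(i) then gives
\[
-\mo_\R(-\Delta_{D,\Omega,\R}+V_{1,\R})=\mi(\Upsilon,\cX_D),
\]
and the real/complex identity $\mo_\R(-\Delta_{D,\Omega,\R}+V_\R)=2\mo_\C(-\Delta_{D,\Omega}+V)$, proved exactly as Proposition \ref{3.9} (the argument used $\vec\theta$-periodicity only insofar as the boundary conditions commute with $\cJ$, which holds trivially for Dirichlet data), yields \eqref{dirf1}.

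For \eqref{dirf2} I would compute the Maslov crossing form at a crossing $t_*\in[\tau,1]$ using Theorem \ref{3.19teor}, which applies to the maximal Laplacian and is therefore independent of the particular self-adjoint extension. On $\partial\Omega$ the Dirichlet condition $u=0$ forces the tangential gradient to vanish, so $\nabla u=(\partial_\nu u)\vec\nu$. Substituted into \eqref{t3.21}, the first boundary integral vanishes, the $|\nabla u|^2$ integral reduces to $\int_{\partial\Omega}(x\cdot\vec\nu)|\partial_\nu u|^2\,d^{n-1}\sigma$, and the cross term $(\nabla u\cdot x,\nabla u\cdot\vec\nu)$ becomes twice that same integral; after cancellation one obtains the Pohozaev-type identity
\[
\cQ_{t_*,\cX_D}([u_\R],[u_\R])=-t_*^{-1}\int_{\partial\Omega}(x\cdot\vec\nu)\,|\partial_\nu\mathbf u(x)|^2\,d^{n-1}\sigma(x).
\]
Since $\Omega$ is star-shaped, $x\cdot\vec\nu\ge 0$ a.e.\ on $\partial\Omega$, whence $\cQ_{t_*,\cX_D}\le 0$; strict negativity on the (finite-dimensional) kernel then lets one invoke Theorem \ref{mormas}(iii), and passing from the real to the complex setting through the analogue of Proposition \ref{3.9} gives \eqref{dirf2}.

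The main obstacle is the strict negative-definiteness of $\cQ_{t_*,\cX_D}$. Two ingredients are needed and this is precisely where the additional hypotheses enter. First, the $C^{1,r}$ regularity with $r>1/2$ upgrades Dirichlet eigenfunctions to $H^2(\Omega;\C^m)$, so that $\partial_\nu\mathbf u$ exists as an $L^2(\partial\Omega)$ trace and the boundary integrals in Theorem \ref{3.19teor} are rigorously justified. Second, if $\cQ_{t_*,\cX_D}([u_\R],[u_\R])=0$, then $\partial_\nu u=0$ on the set $\{x\in\partial\Omega:x\cdot\vec\nu>0\}$, which has positive surface measure for star-shaped $\Omega$ (and the open condition forces this set to contain a $C^{1,r}$ portion when $n\ge 2$); combined with $u|_{\partial\Omega}=0$ and the unique continuation property from Cauchy data for the Schrödinger equation $-\Delta u+(V-0)u=0$, this forces $u\equiv 0$. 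Once this sign-definiteness is established, the proof is complete.
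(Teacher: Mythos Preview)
Your argument for \eqref{dirf1} is exactly the paper's: verify the hypotheses of Theorem \ref{mormas}\,{\it(i)} with $A=-\Delta_{\min}$, $A_\cD=-\Delta_{D,\Omega,\R}$, $V_t=V_{t,\R}$, use the positivity of the Dirichlet spectrum to kill $\mo(A_\cD+V_{\tau,\R})$ for small $\tau$, and pass from real to complex via the analogue of Proposition \ref{3.9}.

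For \eqref{dirf2} you take a genuinely different route. The paper simply invokes Theorem \ref{mormas}\,{\it(iii)} together with Corollary 5.7 of \cite{CJLS} as a black box for the negative-definiteness of the crossing form. You instead re-derive this sign information in a self-contained way: adapt the computation of Theorem \ref{3.19teor} to the Dirichlet setting (where $u|_{\partial\Omega}=0$ makes \eqref{bound2} and the first boundary term trivial), obtain the Pohozaev-type identity $\cQ_{t_*,\cX_D}=-t_*^{-1}\int_{\partial\Omega}(x\cdot\nu)|\partial_\nu\mathbf u|^2$, and then argue strict negativity via unique continuation. One caution: Theorem \ref{3.19teor} as stated is for the periodic extension on $Q$; the first equality in \eqref{t3.21} is indeed general (it is Lemma \ref{g2}), but the boundary formula uses integration by parts that requires $H^2$-regularity of the eigenfunctions, which is where you correctly invoke the $C^{1,r}$ assumption. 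A second caution: your strict-negativity step needs unique continuation from Cauchy data on a \emph{portion} of $\partial\Omega$ (the set $\{x\cdot\nu>0\}$), which is a sharper statement than Theorem \ref{bucp} (that theorem assumes vanishing of both traces on all of $\partial\Omega$). This partial-boundary unique continuation is true under your hypotheses, but it is an external input not contained in the present paper; presumably it is exactly what \cite{CJLS} packages into their Corollary 5.7. Your route has the advantage of making the role of star-shapedness and of the $C^{1,r}$, $n\ge2$ hypotheses transparent, at the cost of importing that stronger unique continuation result.
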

\begin{proof}
Theorem \ref{mormas} {\it (i)} with $A:=-\Delta_{\min}$ and $A_\cD:=-\Delta_{D,\Omega}$ imply \eqref{dirf1}. Theorem \ref{mormas} {\it (iii)} together with Corollary 5.7 in \cite{CJLS} imply \eqref{dirf2}.
\end{proof}
Similar proofs work for the Neumann Laplacian $-\Delta_{N,\Om}$.
\begin{theorem}\lb{dirn}
	Let $\Omega$ be an open bounded star-shaped Lipschitz domain in $\R^n$ and assume that $V\in C^1(\R^n, \R^{m\times m})$ and  $V(x)=V(x)^{\top}$, $x\in\Omega$. Assume that the matrix $V(0)$ is invertible and denote $\cX_N=\gamma(\dom\big(-\Delta_{N,\Omega})\big)$. Then the Morse index of the Schr\"{o}dinger operator with the Neumann boundary conditions,  
	\begin{align}L_{\cN}u&:=-\Delta_{N,\Omega}u+Vu,\no\\  \dom(-\Delta_{N,\Omega})&=\{u\in H^1(\Omega,\C^m)\big| \Delta\in L^2(\Omega, \C^m),\ \gamma_{N}u=0 \text{\ in\ } H^{-1/2}(\Omega,\C^m)\}\no,\end{align} 
	and, for $\tau\in(0,1]$ sufficiently small, the Maslov index of the path $\Upsilon:[\tau,1]\rightarrow F\Lambda(\cX_N)$ defined by $t\mapsto\gamma\big(\ker\big(-\Delta_{\max}+V_{t,\R}\big)\big)$ for all $t\in[\tau,1]$ are related as follows:
	\begin{equation}\no
	2\mo_{\C}\big(-\Delta_{N,\Omega}+V\big)=-{\mi \big(\Upsilon,\cX_N\big)}+2\mo(M(0)).
	\end{equation}
	Moreover, if $\min\spec\big(2tV(tx)+t^2\nabla V(tx)x\big)>0$ for each $t\in(0,1]$ and almost all $x\in \Omega$, then for a sufficiently small $\tau>0$ one has
	\begin{equation}\no
	\mo_{\C}\big(-\Delta_{N,\Omega}+V\big)=-\sum\limits_{\tau< t \leq 1}\dim_{\C}\big(\ker(-\Delta_{N,\Omega}+t^2V(tx))\big)+\mo(M(0)), 
	\end{equation}
	 while if $\min\spec\big(2tV(tx)+t^2\nabla V(tx)x\big)<0$ for each $t\in(0,1]$ and almost all $x\in \Omega$ then for a sufficiently small $\tau>0$ one has
	\begin{equation}\no
	\mo_{\C}\big(-\Delta_{N,\Omega}+V\big)=\sum\limits_{\tau\leq t < 1}\dim_{\C}\big(\ker(-\Delta_{N,\Omega}+t^2V(tx))\big)+\mo(M(0)).
	\end{equation}
\end{theorem}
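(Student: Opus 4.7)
The strategy mirrors that of Theorem \ref{mormasper} (the periodic case with $\vec{\theta}=0$), since both situations share the same structural feature: the base operator has a nontrivial low-frequency part which forces a nonzero Morse index at $\tau\to 0^+$. First I would pass to the real setting by applying Theorem \ref{mormas} with $A:=-\Delta_{\min}$, $A_\cD:=-\Delta_{N,\Omega,\R}$ (the real Neumann Laplacian on $L^2(\Omega;\R^{2m})$), and $V_t:=V_{t,\R}(x)=t^2V(tx)\otimes I_2$. The resolvent compactness of $-\Delta_{N,\Omega,\R}$ on a bounded Lipschitz domain and the absence of $\dom(A)$-intersections for $\ker(A^*+V_t-\lambda)$ (the analogue of Theorem \ref{bucp} via unique continuation) provide the standing hypotheses of Theorem \ref{mormas} (i). The uniform spectral lower bound $\lambda_\infty=-\sup_{t}\|V_{t,\R}\|_{L^\infty(\Omega)}-1$ is obtained from the perturbation estimate $\dist(\spec(-\Delta_{N,\Omega,\R}),\spec(-\Delta_{N,\Omega,\R}+V_{t,\R}))\le\|V_{t,\R}\|_{L^\infty}$, exactly as in Lemma \ref{noegv}.

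The main new ingredient, analogous to Proposition \ref{b1per}, is the identity
\begin{equation}\no
\mo_{\R}\big(-\Delta_{N,\Omega,\R}+V_{\tau,\R}\big)=2\,\mo\big(V(0)\big)\qquad\text{for sufficiently small }\tau>0.
\end{equation}
I would prove this by the same Riesz projection comparison: rescale to $\tau^{-2}(-\Delta_{N,\Omega})+V(\tau x)$, which is asymptotically close (in operator norm on resolvents) to $\tau^{-2}(-\Delta_{N,\Omega})+V(0)$ as $\tau\to 0$. By the tensor-product spectral identity (analogue of Lemma \ref{constspec}), the latter has spectrum $\{\tau^{-2}\lambda_k+\mu_j\}$ where $\lambda_k\in\spec(-\Delta_{N,\Omega})$ starts at $\lambda_0=0$ (with eigenspace $=$ constants) and $\mu_j\in\spec(V(0))$. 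Since $V(0)$ is invertible, there is a spectral gap $\delta>0$ around zero, and for $\tau$ small enough the negative eigenvalues are exactly $\{\mu_j<0\}$, yielding $\mo(V(0))$. A contour integral $\frac1{2\pi\mathbf i}\oint_C(z-\cdot)^{-1}dz$ around the relevant rectangle, together with uniform resolvent bounds on $C$, gives $\|P_\tau-P_\tau^0\|_{\cB}\to 0$; combined with the integer-valuedness of $\tr P_\tau$ and a uniform a priori bound on $\mo_{\R}(-\Delta_{N,\Omega,\R}+V_{\tau,\R})$ (derivable from the Maslov-index identity of Theorem \ref{mormas} (i) between $\tau$ and a fixed $\tau_0$), this forces equality of Morse indices for small $\tau$.

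Combining the Morse-Maslov identity of Theorem \ref{mormas} (i) with the boundary computation above and with the real-complex dimension identity $\mo_{\R}(-\Delta_{N,\Omega,\R}+V_{\tau,\R})=2\mo_{\C}(-\Delta_{N,\Omega}+V_\tau)$ (the direct Neumann analogue of Proposition \ref{3.9}, proved by the same map $\cC$ of \eqref{C1}--\eqref{C2}) yields the first formula of the theorem. The sign-definite formulas follow from parts (ii) and (iii) of Theorem \ref{mormas}, since
\begin{equation}\no
\tfrac{d}{dt}V_{t,\R}(x)\big|_{t=t_*}=\big(2t_*V(t_*x)+t_*^2\nabla V(t_*x)x\big)\otimes I_2
\end{equation}
is positive (resp.\ negative) definite on $\ker(-\Delta_{N,\Omega,\R}+V_{t_*,\R})$ under the respective hypothesis. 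The translation from $\dim\ker(-\Delta_{N,\Omega,\R}+V_{t,\R})$ to $\dim_{\C}\ker(-\Delta_{N,\Omega}+t^2V(tx))$ is handled by Proposition \ref{3.9}, and the identification with $-\Delta_{N,\Omega}+t^2V(tx)$ on the fixed domain $\Omega$ is automatic here since we did not rescale the underlying domain.

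The principal obstacle is the low-frequency analysis of the preceding paragraph: establishing both the uniform a priori bound on Morse indices and the uniform resolvent estimates required for operator-norm convergence of the Riesz projections. A secondary technical point is verifying that the Neumann trace operator, together with the unique continuation principle for $-\Delta+V_{t,\R}$ on a Lipschitz domain, gives $\ker(-\Delta_{\max}+V_{t,\R})\cap\cD_{\min}=\{0\}$; this is where the Lipschitz hypothesis on $\partial\Omega$ (and the $C^{1,r}$-hypothesis where invoked) enters, and it is what allows us to quote the abstract framework of Section \ref{sec:2} without modification.
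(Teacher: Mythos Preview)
Your proposal is correct and follows essentially the same approach as the paper: the paper's proof of Theorem~\ref{dirn} is simply the remark ``Similar proofs work for the Neumann Laplacian $-\Delta_{N,\Omega}$,'' meaning one repeats the argument of Theorem~\ref{mormasper} (via Theorem~\ref{mormas}, the Riesz-projection computation of Proposition~\ref{b1per}, and the real--complex correspondence of Proposition~\ref{3.9}) with $-\Delta_{N,\Omega,\R}$ in place of $-\Delta_{0,\R}$. Your outline is in fact more detailed than what the paper supplies, and it correctly identifies the key point that $0\in\spec(-\Delta_{N,\Omega})$ plays the same role here that $\lambda_0=0$ plays for $-\Delta_0$ in the periodic case.
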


\section{The symplectomorphism between the space of abstract boundary values and $\nohs\times\noh$}\lb{sec4}
In the previous section we related the Morse index of a differential operator on a multidimensional domain in $\R^n$ and the Maslov index of a path taking values in the set of Lagrangian subspaces of a rather sophisticated space of abstract boundary values. 
In this section, we show how to reformulate Theorems \ref{mormastper}, \ref{mormasper} and \ref{dir} 
using the Maslov index of yet another natural path taking values in the set of Lagrangian subspaces of the real symplectic Hilbert space $\mathfrak{H}=\nohs\times\noh$ of functions on the actual boundary of the domain. We collected some preliminary information used in this section in Appendix \ref{appA}.

\begin{hypothesis}\lb{b1}
Let $n\in\bbN,\ n\geq 2$, and assume that $\Omega\subset\R^n$ is a  quasi-convex domain.
\end{hypothesis} 
We refer to  \cite[Definition 8.9]{GM10} for the definition of quasi-convex domains and recall from \cite[Section 8]{GM10} that $\Omega$ is quasi-convex provided each point $x\in\partial\Omega$ has 
a neighbourhood $\Omega_x$ in $\Omega$ which is almost convex and has some additional Fourier multiplier property 
$MH^{1/2}_{\delta}$, see \cite[Definition 8.1]{GM10}. For instance, convex domains or domains with the local exterior 
ball conditions are almost convex, square Dini domains are $MH^{1/2}_{\delta}$, and $C^{1,r}$ domains with $r>1/2$ are
quasi-convex.
Following \cite{GM10} we consider the Dirichlet trace operator $\hatt{\gamma}_D$ and the normalized Neumann trace $\tN$, 
see the definitions and notations introduced in Appendix A.
\begin{proposition}\lb{mainb}
Assume Hypothesis \ref{b1}. Then the operator 
\begin{align}
&T:\dom(-\Delta_{\max})\rightarrow \nohs\times \noh\lb{b4},\\
& Tu:=(\gd u, \tN u)\lb{4.2}
\end{align}
is bounded when the space 
$\{u\in L^2(\Omega)| \Delta u\in L^2(\Omega)\}=\dom(-\Delta_{\max})$ is equipped with the natural graph norm 
$\|u\|_{gr(-\Delta)}:=\|u\|_{L^{2}(\Om)}+\|\Delta u\|_{L^{2}(\Om)}$. Moreover, this operator is onto, and
\begin{equation}\lb{b6}
\ker(T)=H^2_0(\Omega)=\{u\in L_2(\Omega): \Delta u\in L_2(\Omega),\  \gd u=0,\  \gn u=0\}. 
\end{equation}
\end{proposition}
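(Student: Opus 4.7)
The plan is to prove the three assertions — boundedness, kernel identification, and surjectivity — in that order. The boundedness of $T$ in \eqref{b4} will follow directly from the mapping properties of the extended traces established in \cite{GM10} for quasi-convex domains: the Dirichlet trace extends to a bounded operator $\gd:\dom(-\Delta_{\max})\to\nohs$, and the normalized Neumann trace extends to a bounded operator $\tN:\dom(-\Delta_{\max})\to\noh$, provided the maximal domain is equipped with the graph norm $\|\cdot\|_{gr(-\Delta)}$. Here $\tN$ is constructed precisely so as to accommodate elements of $\dom(-\Delta_{\max})$ that need not lie in $H^2(\Omega)$, and it agrees with the classical Neumann trace on $H^2(\Omega)$.

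For the kernel characterization \eqref{b6}, I would argue as follows. One inclusion is immediate: if $u\in H^2_0(\Omega)$, then $\Delta u\in L^2(\Omega)$, while $\gd u=0$ and $\tN u=0$ because both traces reduce to the classical Sobolev traces on $H^2(\Omega)$. For the converse, suppose $u\in\dom(-\Delta_{\max})$ lies in $\ker T$, so in particular $\gd u=0$; this places $u$ in the domain of the Dirichlet Laplacian $-\Delta_{D,\Omega}$. By quasi-convexity of $\Omega$ and the regularity result of \cite{GM10}, one has $\dom(-\Delta_{D,\Omega})\subset H^2(\Omega)$. Thus $u\in H^2(\Omega)$, and the remaining conditions $\gamma_D u=0$ and $\gamma_N u=\tN u=0$ on $\dOm$ force $u\in H^2_0(\Omega)$, delivering the equality of sets in \eqref{b6}.

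The main obstacle is surjectivity, which I would establish by constructing a preimage of an arbitrary pair $(f,g)\in\nohs\times\noh$ in two steps. First, using the solvability of the generalized Dirichlet problem on quasi-convex domains with data in $\nohs$ (a key result of \cite{GM10}), I obtain $u_1\in\dom(-\Delta_{\max})$ satisfying $(-\Delta+I)u_1=0$ and $\gd u_1=f$; the spectral shift by $I$ removes any kernel issues and yields uniqueness. The solution $u_1$ has a well-defined normalized Neumann trace $\tN u_1\in\noh$, so the residual $h:=g-\tN u_1$ lies in $\noh$. I then complete the preimage by producing $u_2\in H^2(\Omega)$ with $\gamma_D u_2=0$ and $\gamma_N u_2=h$; such a lifting is standard once one identifies $\noh$ with $H^{1/2}(\dOm)$ under Hypothesis \ref{b1}, and can be obtained, for instance, by solving an auxiliary inhomogeneous Neumann problem or by combining a harmonic extension of $h$ with a suitable cutoff. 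Setting $u:=u_1+u_2$ then gives $\gd u=f$ and $\tN u=g$.

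The hard step is the first half of surjectivity: the data $f\in\nohs$ is too singular to admit a direct $H^{1/2}$-lifting, so one must invoke the delicate solvability of the Dirichlet problem for $-\Delta+I$ with the roughest admissible boundary data, proved in \cite{GM10} via layer-potential methods that exploit the quasi-convex structure of $\Omega$. Every other ingredient — the classical Neumann lifting, the coincidence of $\tN$ and $\gamma_N$ on $H^2(\Omega)$, and the $H^2$-regularity of $\dom(-\Delta_{D,\Omega})$ — is a standard consequence of the trace framework assembled in \cite{GM10} and of Hypothesis \ref{b1}.
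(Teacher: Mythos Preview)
Your argument is essentially correct but follows a different route from the paper in both the kernel and surjectivity steps. For the kernel, the paper does not invoke $H^2$-regularity of $\dom(-\Delta_{D,\Omega})$; instead it uses the explicit decomposition $\ker(\tN)=H^2_0(\Omega)\dot+\{u\in L^2(\Omega):\Delta u=0\}$ from Theorem~\ref{nn}, together with $\ker(\gd)\cap\{\Delta u=0\}=\{0\}$ (since $0\notin\spec(-\Delta_{D,\Omega})$) and $H^2_0(\Omega)\subset\ker(\gd)$, to obtain $\ker(T)=\ker(\tN)\cap\ker(\gd)=H^2_0(\Omega)$ directly. For surjectivity the paper reverses your two steps and thereby avoids any correction: it first lifts the Neumann datum to $u_0\in H^2(\Omega)\cap H^1_0(\Omega)$ with $\tN u_0=f$ via \eqref{onto}, and then solves the \emph{harmonic} Dirichlet problem $-\Delta v_0=0$, $\gd v_0=g$. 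Because harmonic functions lie in $\ker(\tN)$ by \eqref{kernel}, one gets $T(u_0+v_0)=(g,f)$ with no residual to fix. Your scheme works too, but the shift to $-\Delta+I$ is unnecessary (the Dirichlet Laplacian already has trivial kernel) and it forces the extra Neumann-correction step that the paper's ordering sidesteps.

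One genuine slip to flag: you justify the Neumann lifting by ``identifying $\noh$ with $H^{1/2}(\dOm)$ under Hypothesis~\ref{b1}'' and then appealing to a harmonic extension with cutoff or an auxiliary Neumann problem. The identification $\noh=H^{1/2}(\dOm)$ does \emph{not} follow from quasi-convexity; it requires $\partial\Omega\in C^{1,r}$ with $r>1/2$ (cf.\ \cite[Lemma~6.2]{GM10}). Moreover, neither of your suggested constructions obviously yields $u_2\in H^2(\Omega)$ with simultaneously $\gamma_D u_2=0$ and $\gamma_N u_2=h$. The clean fix --- and what the paper uses --- is to invoke \eqref{onto} (equivalently \cite[Lemma~6.3]{GM10}) directly: $\gaN:H^2(\Omega)\cap H^1_0(\Omega)\to\noh$ is onto, which produces exactly the $u_2$ you need without any identification of trace spaces.
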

\begin{proof}
The fact that $T$ is bounded follows from the definitions of the trace maps $\gd$ and $\tN$ 
(see Appendix A, Lemma \ref{wd}, Theorem \ref{nn}). 
Invoking Theorem \ref{m=m*}, one establishes the second equality in \eqref{b6}, 
to show the first equality we recall \eqref{kernel}, that is 
\begin{equation}
\lb{b7}
\ker(\tN)=H_0^2(\Omega)\dot{+}\{u\in L^2(\Om), \,\,-\Delta u=0 \}.
\end{equation}
Also, since $0\notin\spec(-\Delta_{D,\Omega})$, one has 
\begin{equation}
\lb{b77}
\ker(\gd)\cap\{u\in L^2(\Om), \,\,-\Delta u=0 \}=\{0\},
\end{equation}
and moreover,
\begin{equation}
\lb{b772}
H^2_0(\Omega)\subset\ker(\gd).
\end{equation}
Then $\ker(T)=\ker(\tN)\cap\ker(\gd)$ and \eqref{b7}, \eqref{b77}, \eqref{b772} imply
$\ker(T)=H_0^2(\Omega)$.
Next, we prove that $T$ is onto. Fix a vector $(g,f)\in \nohs\times\noh$. 
Since $\tN$ is surjective by Theorem \ref{nn} (moreover, \eqref{onto} holds) there exists 
$u_0\in H^2(\Omega)\cap H_0^1(\Omega)$, such that $\tN u_0=f$. By \cite[Theorem 10.4]{GM10}, the boundary value problem 
\begin{equation}
    -\Delta u =0,\,
    u\in L^2(\Om),\quad     
	\gd u =g   \text{\ on\ } \partial\Omega, \end{equation}
has a unique solution $v_0$. Employing $v_0\in \ker(\tN)$, by formula \eqref{b7}  one obtains 
\begin{align}
T (u_0+v_0)=(\tN(u_0+v_0),\gd(u_0+v_0))=(\tN(u_0),\gd(v_0))=(f,g)
\end{align}
since $\gd (u_0)=\gaD(u_0)=0$ by formula \eqref{comp}.
\end{proof}

\begin{definition} Assume Hypothesis \ref{b1} and introduce the symplectic Hilbert space $\mathfrak{H}$
equipped with the standard inner product and the symplectic form 
\begin{align}\lb{frakH}
\omega_{\mathfrak{H}}&:\mathfrak{H}\times \mathfrak{H} \rightarrow \R,\quad \mathfrak{H}:=\nohs\times\noh,\\
\omega_{\mathfrak{H}}((u_1,v_1),(u_2,v_2))&:={\lnoh v_2, u_1  \rnohs}-\lnoh v_1,u_2 \rnohs\no.
\end{align}
\end{definition}
\begin{definition} Assume Hypothesis \ref{b1} and introduce the symplectic Hilbert space $\cH_{\Delta}$
	equipped with the standard inner product and the symplectic form 
	\begin{align}
	\omega_{\cH_{\Delta}}&:{\cH_{\Delta}}\times {\cH_{\Delta}} \rightarrow \R,\quad \cH_{\Delta}:=\dom(-\Delta_{\max})/\dom(-\Delta_{\min}),\no\\
	\omega_{\cH_{\Delta}}([u],[v])&:=(-\Delta_{\max} u, v)_{L^2(\Omega)}-(u, -\Delta_{\max} v)_{L^2(\Omega)}\lb{frakH2}.
	\end{align}
\end{definition}
Next, using the trace map $T$ from \eqref{b4}, \eqref{4.2}, we introduce a map $T_\mathfrak{H}$ as follows
\begin{align}
T_\mathfrak{H}:\dom(T_\mathfrak{H})=\cH_{\Delta}\to\mathfrak{H}, \,\,\,T_\mathfrak{H}([u])=T(u).
\end{align}

\begin{theorem}\lb{symmor}
Assume Hypothesis \ref{b1}. Then the linear operator $T_\mathfrak{H}$
\begin{align}\lb{T_B}
& T_\mathfrak{H}: \cH_{\Delta}\rightarrow \mathfrak{H},\ \ \ T_\mathfrak{H}[u]:=Tu,
\end{align} 
is a bounded isomorphism, and thus boundedly invertible; moreover 
\begin{equation} \lb{c2}
\omega_{\mathfrak{H}}(T_\mathfrak{H}[u],T_\mathfrak{H}[v])=\omega_{\cH_{\Delta}}([u],[v])
\end{equation}
for all $u, v \in \cH_{\Delta}$.
\end{theorem}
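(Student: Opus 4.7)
The plan is to verify separately the bounded-isomorphism claim and the symplectomorphism identity \eqref{c2}. Both rest on Proposition~\ref{mainb} together with a generalized Green's second identity for the extended traces $\gd$ and $\tN$ on quasi-convex domains.

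First, I would check that $T_{\mathfrak{H}}$ is well-defined on classes. By Proposition~\ref{mainb} one has $\ker T = H_0^2(\Omega) = \dom(-\Delta_{\min})$, so $T(u+h) = Tu$ for every $h \in \dom(-\Delta_{\min})$, and the value $T_{\mathfrak{H}}[u] := Tu$ depends only on the class $[u]$. Boundedness of $T_{\mathfrak{H}}$ then follows from boundedness of $T$ in the graph norm: for any representative $u + h$,
\begin{equation*}
\|T_{\mathfrak{H}}[u]\|_{\mathfrak{H}} = \|T(u+h)\|_{\mathfrak{H}} \leq \|T\|\,\|u+h\|_{\cG},
\end{equation*}
and taking the infimum over $h \in \dom(-\Delta_{\min})$ yields $\|T_{\mathfrak{H}}[u]\|_{\mathfrak{H}} \leq \|T\|\cdot|||\gamma(u)|||$. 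Injectivity is immediate: $T_{\mathfrak{H}}[u] = 0$ forces $u \in \ker T = \dom(-\Delta_{\min})$, so $[u] = 0$. Surjectivity of $T_{\mathfrak{H}}$ is exactly the surjectivity of $T$ from Proposition~\ref{mainb} translated to equivalence classes. The open mapping theorem then supplies a bounded inverse, giving the first assertion of the theorem.

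For \eqref{c2}, the key ingredient is the generalized Green's second identity
\begin{equation*}
(-\Delta_{\max} u, v)_{L^2(\Omega)} - (u, -\Delta_{\max} v)_{L^2(\Omega)} = \lnoh \tN v, \gd u \rnohs - \lnoh \tN u, \gd v \rnohs,
\end{equation*}
valid for all $u, v \in \dom(-\Delta_{\max})$ on the quasi-convex domain $\Omega$. Granting this, the left-hand side is $\omega_{\cH_{\Delta}}([u], [v])$ by \eqref{frakH2}, while substituting $T_{\mathfrak{H}}[u] = (\gd u, \tN u)$ and $T_{\mathfrak{H}}[v] = (\gd v, \tN v)$ into \eqref{frakH} produces precisely its right-hand side, namely $\omega_{\mathfrak{H}}(T_{\mathfrak{H}}[u], T_{\mathfrak{H}}[v])$, so \eqref{c2} follows. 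The principal obstacle is this Green's identity at the level of maximally defined functions with only the weak traces: for $u, v \in C^{\infty}(\overline{\Omega})$ it reduces to the classical second Green formula, but its extension to $\dom(-\Delta_{\max})$ requires joint continuity of both sides in the graph norm. That rests on the continuity $\gd \colon \dom(-\Delta_{\max}) \to \nohs$ and $\tN \colon \dom(-\Delta_{\max}) \to \noh$ recorded in Appendix~\ref{appA} via \cite{GM10}, plus density of a smooth core; once these are in place, continuous extension from the smooth case delivers the identity, and consequently \eqref{c2}, without any residual ambiguity in the duality pairing.
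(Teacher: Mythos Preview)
Your argument is correct and follows essentially the same route as the paper: well-definedness via $\ker T=H_0^2(\Omega)=\dom(-\Delta_{\min})$, boundedness from that of $T$, bijectivity plus the open mapping theorem, and the symplectic identity from Green's formula. The paper simply invokes \eqref{Green} from Theorem~\ref{nn} (i.e., \cite[Theorem 12.1]{GM10}) for the last step.

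One caveat: your proposed justification of Green's identity by ``density of a smooth core'' is unnecessary and, as stated, would not work. For Lipschitz (or quasi-convex) domains, $C^\infty(\overline{\Omega})$ is in general \emph{not} dense in $\dom(-\Delta_{\max})$ with the graph norm, so continuous extension from smooth functions is not available. The identity \eqref{Green} in \cite{GM10} is obtained by a different mechanism, exploiting the specific construction of $\tN$ through the Dirichlet-to-Neumann map. Since you already cite Appendix~\ref{appA} and \cite{GM10} for the needed continuity and the formula itself, just invoke \eqref{Green} directly and drop the density sketch.
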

\begin{proof}
The operator $T_\mathfrak{H}$ is well-defined, since by Theorem \ref{m=m*} for all  $u_1,u_2 \in [u]$, one has $u_1-u_2\in\dom(-\Delta_{\min})=H^2_0(\Omega)$, 
thus $T_\mathfrak{H}(u_1-u_2)=0$. 
Let $[u]\in\cH_{\Delta}$ and choose a representative $u_1\in[u]$, such that $\|[u]\|_{\cH_{\Delta}}=\|u_1\|_{gr(-\Delta)}$. Then
\begin{equation}
\|T_\mathfrak{H}[u]\|_{\mathfrak{H}}=\|Tu\|_{\mathfrak{H}}=\|Tu_1\|_{\mathfrak{H}}\leq C\|u_1\|_{gr(-\Delta)}=C\|[u]\|_{\cH_{\Delta}},
\end{equation}
where $C>0$ is the norm of operator $T$ from Proposition \ref{mainb}. The operator $T_\mathfrak{H}$ is bijective, since $T$ is onto and  $\ker (T)=H^2_0(\Om)=\dom(-\Delta_{\min})$ by Proposition  \ref{mainb}. The open mapping theorem guarantees that $T_\mathfrak{H}^{-1}$ is bounded. Finally, \eqref{c2} follows from \eqref{frakH}, \eqref{frakH2} and Green's formula \eqref{Green}.
\end{proof}
We are now ready to formulate a version of Theorem \ref{absfur} with the symplectic structure being defined on the space of functions on the boundary of the domain $\Omega$. We continue to denote the natural projection by $\gamma:\dom(-\Delta_{\max})\rightarrow \dom(-\Delta_{\max})/\dom(-\Delta_{\min}),$ $\gamma(u):=[u]$, and use $T$ for the trace operator from Proposition \ref{mainb}.
 \begin{theorem}\lb{lagrsets} 
Assume Hypothesis \ref{b1} and that $t\to V_t$ is in $C^1(\Sigma, L^{\infty}(\Omega))$. Let $-\Delta_{\cD}$ be a self-adjoint extension of $-\Delta_{\min}$ with some domain $\cD\subset L^2(\Omega,d^nx)$, and assume that $-\Delta_{\cD}$ has compact resolvent. Then

(i) $T({\cD})$ is a Lagrangian subspace in $\mathfrak{H}$,

(ii) $T(K_{\lambda,t}) \in F\Lambda (T(\cD))$ for all $(\lambda,t)\in \R\times [0,1]$, 
where $K_{\lambda,t}:=\ker(-\Delta_{\max}+V_t-\lambda)$,

(iii) the map 
 $\Upsilon_{\Delta}$ defined by $s\mapsto T\big(\ker(-\Delta_{\max}+V_{t(s)}-\lambda(s)\big)$ belongs to 
 $C^1(\Sigma_j, F\Lambda(T(\cD)))$, where we use parametrization \eqref{par2}--\eqref{par5}. 

(iv) Also, if $V(\cdot)\in C(\Sigma, L^{\infty}(\Omega))$, then so is $\Upsilon_{\Delta}$. 
\end{theorem}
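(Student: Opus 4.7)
The plan is to reduce Theorem \ref{lagrsets} to the already-established Theorem \ref{absfur} by transporting all of its conclusions from the abstract boundary value space $\cH_{\Delta}$ to the concrete boundary space $\mathfrak{H}$ via the symplectomorphism $T_{\mathfrak{H}}$ of Theorem \ref{symmor}. The basic identity powering this transfer is the factorization $T = T_{\mathfrak{H}} \circ \gamma$, so that $T(\cX) = T_{\mathfrak{H}}(\gamma(\cX))$ for every subspace $\cX \subset \dom(-\Delta_{\max})$. Because $T_{\mathfrak{H}}$ is a bounded linear bijection with bounded inverse that intertwines $\omega_{\cH_{\Delta}}$ with $\omega_{\mathfrak{H}}$, it preserves the class of closed subspaces, Lagrangian subspaces, Fredholm pairs of Lagrangian subspaces, and $C^k$ paths into the Fredholm--Lagrangian Grassmannian for $k \in \{0,1\}$.

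For (i), I would apply Theorem \ref{absfur}(i) to $A = -\Delta_{\min}$ and its self-adjoint extension $-\Delta_{\cD}$ to get that $\gamma(\cD)$ is Lagrangian in $\cH_{\Delta}$, then push forward by $T_{\mathfrak{H}}$. For (ii), observe that $-\Delta_{\cD} + V_t - \lambda$ is a bounded self-adjoint perturbation of an operator with compact resolvent, hence itself has compact resolvent and is in particular a Fredholm self-adjoint extension of the symmetric operator $-\Delta_{\min} + V_t - \lambda$ (whose adjoint is $-\Delta_{\max} + V_t - \lambda$). Theorem \ref{absfur}(ii) then gives $\gamma(K_{\lambda,t}) \in F\Lambda(\gamma(\cD))$, and the symplectomorphism transports this to $T(K_{\lambda,t}) \in F\Lambda(T(\cD))$, as desired.

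For (iii), on each segment $\Sigma_j$ of the parametrization \eqref{par2}--\eqref{par5} I would consider the $C^1$ family $s \mapsto W_s := V_{t(s)} - \lambda(s) I_{L^2(\Omega)}$ of bounded self-adjoint operators on $L^2(\Omega)$ and invoke Theorem \ref{absfur}(iii) to conclude that $s \mapsto \gamma(\ker(-\Delta_{\max} + W_s))$ belongs to $C^1(\Sigma_j, F\Lambda(\gamma(\cD)))$. Composing with the bounded isomorphism $T_{\mathfrak{H}}$ preserves $C^1$ regularity and lands inside $F\Lambda(T(\cD))$ by the symplectomorphism property, yielding $\Upsilon_{\Delta} \in C^1(\Sigma_j, F\Lambda(T(\cD)))$. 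Part (iv) is proved in exactly the same way with $C^1$ replaced by $C^0$.

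The only point requiring care is the hypothesis of Theorem \ref{absfur}(iii) that $\ker(-\Delta_{\max} + W_s) \cap \dom(-\Delta_{\min}) = \{0\}$ for every $s \in \Sigma_j$; this is the standing spectral assumption made in Theorem \ref{mormas} and is therefore the one substantive input needed from the setting of the application. Once that input is provided, the entire statement is a diagram chase: the symplectomorphism $T_{\mathfrak{H}}$ of Theorem \ref{symmor} carries Lagrangian subspaces, Fredholm pairs, and smooth paths across, so the proof reduces to bookkeeping.
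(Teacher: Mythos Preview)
Your overall strategy coincides with the paper's: reduce to Theorem~\ref{absfur} via the symplectomorphism $T_{\mathfrak{H}}$ of Theorem~\ref{symmor}, using the factorization $T = T_{\mathfrak{H}}\circ\gamma$ so that $T(\cX)=T_{\mathfrak{H}}(\gamma(\cX))$ for subspaces $\cX\subset\dom(-\Delta_{\max})$. Parts (i) and (ii) are handled exactly as in the paper.

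There is, however, one genuine gap. You correctly flag that Theorem~\ref{absfur}(iii) requires $\ker(-\Delta_{\max}+W_s)\cap\dom(-\Delta_{\min})=\{0\}$, but you then treat this as an external input ``needed from the setting of the application.'' Theorem~\ref{lagrsets} is stated as a self-contained result; its hypotheses (Hypothesis~\ref{b1}, $V_t\in L^{\infty}$, compact resolvent of $-\Delta_{\cD}$) do not include this kernel condition, so it must be \emph{derived}. The paper does exactly this by invoking the unique continuation principle, Theorem~\ref{bucp}: any $u\in\dom(-\Delta_{\min})=H^2_0(\Omega)$ has vanishing Dirichlet and Neumann traces, and if in addition $(-\Delta+V_t-\lambda)u=0$ then $u=0$. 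Without this step your argument for (iii) is incomplete.

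A smaller point of difference in (iii): you assert that composition with the bounded isomorphism $T_{\mathfrak{H}}$ preserves $C^1$ regularity of paths in the Fredholm--Lagrangian Grassmannian. This is true, but since the Grassmannian metric is defined via \emph{orthogonal} projections and $T_{\mathfrak{H}}$ is not unitary, it is not quite automatic. The paper carries this out explicitly: starting from the $C^1$ family of orthogonal projections $P_s$ on $\cH_{\Delta}$ supplied by Theorem~\ref{absfur} (cf.\ Remark~\ref{rem2.4}), it forms the conjugated family $\tilde P^{\mathfrak{H}}_s:=T_{\mathfrak{H}}P_sT_{\mathfrak{H}}^{-1}$ of (generally oblique) projections in $\mathfrak{H}$, checks that their ranges are $T(\ker(-\Delta_{\max}+W_s))$, and then applies a standard orthogonalization (cited from \cite[Remark~3.4]{CJLS}) to produce the required $C^1$ family of orthogonal projections. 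Your sketch would be complete with this one additional sentence.
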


\begin{proof}
Invoking Theorem \ref{bucp},  one applies Theorem \ref{absfur} with $A=-\Delta_{\min}$, $A_\cD=-\Delta_{\cD}$, 
obtaining $\gamma(K_{\lambda,t}) \in F\Lambda(\gamma(\cD))$. To show {\it(i)} and {\it (ii)} we observe that 
$T(\cD)=T_\mathfrak{H}(\gamma(\cD))$, $T(K_{\lambda,t})=T_\mathfrak{H}(\gamma(K_{\lambda,t}))$, and recall that $T_\mathfrak{H}$ is a symplectomorphism.  

To prove {\it(iii)}, we need to construct a smooth family of orthogonal projections $P^{\mathfrak{H}}_s\in\cB(\mathfrak{H})$ onto 
$\ran(P^{\mathfrak{H}}_s)=T\big(\ker(-\Delta_{\max}+V_{t(s)}-\lambda(s)\big)$. By Theorem \ref{absfur} 
there exists a family of smooth orthogonal projections $P_s\in\cB(\cH_{\Delta})$ onto $\ran(P_s)=\gamma\big(\ker(-\Delta_{\max}+V_{t(s)}-\lambda(s))\big)$. 
We shall show that the family of projections $\tilde P^{\mathfrak{H}}_s:=T_\mathfrak{H}\circ P_s\circ T_\mathfrak{H}^{-1}\in\cB(\mathfrak{H})$ (not necessarily orthogonal) 
has desired property. Let $u\in\ran(\tilde P^{\mathfrak{H}}_s)$, then 
$\tilde P^{\mathfrak{H}}_su=u$ or $T_\mathfrak{H}P_sT_\mathfrak{H}^{-1}u=u$ that is $P_sT_\mathfrak{H}^{-1}u=T_\mathfrak{H}^{-1}u$, therefore, by \eqref{T_B}, 
$T_\mathfrak{H}^{-1}u\in\ran(P_s)=\gamma\big(\ker(-\Delta_{\max}+V_{t(s)}-\lambda(s))\big)$ implying 
$u\in T\big(\ker(-\Delta_{\max}+V_{t(s)}-\lambda(s))\big)\big)$. 
Similarly one shows the inclusion $\ran(P^{\mathfrak{H}}_s)\supset T\big(\ker(-\Delta_{\max}+V_{t(s)}-\lambda(s)\big)$. 
Hence, $\tilde P^{\mathfrak{H}}_s\in\cB(\mathfrak{H})$ is 
a smooth family of projections onto $T\big(\ker(-\Delta_{\max}+V_{t(s)}-\lambda(s))\big)$. 
Proceeding as in \cite[Remark 3.4]{CJLS} using the family of smooth projections $\tilde P^{\mathfrak{H}}_s$, we construct a family of smooth orthogonal projections $P^{\mathfrak{H}}_s\in\cB(\mathfrak{H})$ onto 
$\ran(P^{\mathfrak{H}}_s)=T\big(\ker(-\Delta_{\max}+V_{t(s)}-\lambda(s))\big)$.
{\it(iv)} similar to {\it(iii)}.
\end{proof}

We are now ready to establish a relation between the Morse index of the Shr\"{o}dinger operator with the quasi-periodic, Dirichlet and Neumann boundary conditions and the Maslov index of the paths with values in the set of the Lagrangian subspaces of ${\mathfrak{H}}$.  Next, we reformulate Theorems \ref{mormastper}{\it (i)}, \ref{mormasper}{\it (i)} and \ref{dir} in terms of the Lagrangian paths of the (weak) traces of the solutions from the real symplectic Hilbert space $\mathfrak{H}$. Denote $\cX^{\mathfrak{H}}_{\vec{\theta}}:=T(\dom(-\Delta_{\vec{\theta},\R}))$.
\begin{theorem}
Assume Hypothesis \ref{phyp} {\it(ii)} and that $\vec{\theta}\not=0$. Then the Morse index of the operator $ -\Delta_{\vec{\theta}}+V$ and the Maslov index of the path $\Upsilon_\mathfrak{H}:[\tau,1]\rightarrow F\Lambda\big(\cX^{\mathfrak{H}}_{\vec{\theta}}\big)$ defined by $t\mapsto T\big(\ker(-\Delta_{\max}+V_{t,\R})\big)$ for all $t\in[\tau,1]$ with $\tau\in(0,1]$ sufficiently small are related as follows:
\begin{equation*}
2\mo_{\C}\big(-\Delta_{\vec{\theta}}+V\big)=
\begin{cases} \mi \big(\Upsilon,\cX^\mathfrak{H}_{\vec{\theta}}\big) &\text{if $\vec{\theta}\not=0$},\\
-\mi \big(\Upsilon,\cX^\mathfrak{H}_0\big)+2\mo(V(0))& \text{if $\vec{\theta}=0$, $V(0)$ is invertible}. \end{cases} 
\end{equation*}
\end{theorem}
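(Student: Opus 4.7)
The plan is to reduce the statement to Theorems \ref{mormastper} and \ref{mormasper}, which already express the Morse indices as Maslov indices for paths in the symplectic space $\cH_A$ of abstract boundary values, and then to transport those Maslov indices to the symplectic space $\mathfrak{H}$ using the symplectomorphism $T_\mathfrak{H}:\cH_\Delta\to\mathfrak{H}$ constructed in Theorem \ref{symmor}. Because the boundary trace map $T$ of Proposition \ref{mainb} factors as $T=T_\mathfrak{H}\circ\gamma$ on $\dom(-\Delta_{\max})$, one has
\begin{equation*}
T(\ker(-\Delta_{\max}+V_{t,\R}))=T_\mathfrak{H}\bigl(\gamma(\ker(-\Delta_{\max}+V_{t,\R}))\bigr),\qquad \cX^\mathfrak{H}_{\vec\theta}=T_\mathfrak{H}(\cX_{\vec\theta}),
\end{equation*}
so the path $\Upsilon_\mathfrak{H}$ in the new statement is the push-forward under $T_\mathfrak{H}$ of the abstract-boundary-value path $\Upsilon$ used in Theorems \ref{mormastper} and \ref{mormasper}. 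Theorem \ref{lagrsets} certifies that $\Upsilon_\mathfrak{H}$ is a $C^1$ path in $F\Lambda(\cX^\mathfrak{H}_{\vec\theta})$, so the right-hand Maslov index is well-defined.

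The key lemma I would need is invariance of the Maslov index under a bounded linear symplectomorphism $\Phi:(\cH_1,\omega_1)\to(\cH_2,\omega_2)$: for any Lagrangian $\cX\subset\cH_1$ and any continuous path $\Upsilon$ in $F\Lambda(\cX)$,
\begin{equation*}
\mi(\Phi\circ\Upsilon,\,\Phi(\cX))=\mi(\Upsilon,\cX).
\end{equation*}
I would establish this via the crossing-form formula of Theorem \ref{masform}. At a regular crossing $t_*$, the map $\Phi$ restricts to a linear isomorphism $\Upsilon(t_*)\cap\cX\to\Phi(\Upsilon(t_*))\cap\Phi(\cX)$. If $R_t\in\cB(\Upsilon(t_*),\Upsilon(t_*)^\perp)$ is the local parametrisation of $\Upsilon$, then a short computation (projecting $\Phi u+\Phi R_t u$ onto $\Phi(\Upsilon(t_*))$ in $\cH_2$) shows that the local parametrisation of $\Phi\circ\Upsilon$ has derivative $(I-\pi)\Phi\dot R_{t_*}$ at $t_*$, where $\pi$ is the orthogonal projection onto $\Phi(\Upsilon(t_*))$ in $\cH_2$. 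The $\pi$-component drops out when tested against $\Phi u\in\Phi(\Upsilon(t_*))$ because $\Phi(\Upsilon(t_*))$ is Lagrangian with respect to $\omega_2$, leaving
\begin{equation*}
\cQ_{t_*,\Phi(\cX)}(\Phi u,\Phi v)=\omega_2(\Phi u,\Phi\dot R_{t_*}v)=\omega_1(u,\dot R_{t_*}v)=\cQ_{t_*,\cX}(u,v),
\end{equation*}
using the symplectomorphism identity from Theorem \ref{symmor}. Signatures therefore agree termwise, and summing over crossings (with the endpoint adjustments from Theorem \ref{masform}) yields equality of the two Maslov indices.

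With the invariance in hand, applying $\Phi=T_\mathfrak{H}$ gives $\mi(\Upsilon_\mathfrak{H},\cX^\mathfrak{H}_{\vec\theta})=\mi(\Upsilon,\cX_{\vec\theta})$, and substituting into formulas \eqref{morsmastperf} and \eqref{mormasperf} of the earlier theorems delivers the two claimed identities; in particular the $2\mo(V(0))$ correction term in the $\vec\theta=0$ case transfers intact from Theorem \ref{mormasper} via Proposition \ref{b1per}. The main obstacle is carrying out the symplectic-invariance verification rigorously in the infinite-dimensional Fredholm-Lagrangian setting, because the Souriau-map definition of the Maslov index involves the inner-product-dependent complex structure $J$ on $\cH_A$ (respectively $\mathfrak{H}$), and $T_\mathfrak{H}$ is only a symplectomorphism, not an isometry. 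The crossing-form route I propose is designed precisely to bypass this: the crossing form is intrinsic to the symplectic form and the path, both of which are transported cleanly by $T_\mathfrak{H}$.
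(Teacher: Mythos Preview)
Your strategy coincides with the paper's: the paper does not give a separate proof of this theorem but treats it as an immediate reformulation of Theorems \ref{mormastper} and \ref{mormasper}, transported to $\mathfrak{H}$ by the symplectomorphism $T_\mathfrak{H}$ of Theorem \ref{symmor} together with the regularity statement of Theorem \ref{lagrsets}. Your identification $T=T_\mathfrak{H}\circ\gamma$, $\Upsilon_\mathfrak{H}=T_\mathfrak{H}\circ\Upsilon$, $\cX^\mathfrak{H}_{\vec\theta}=T_\mathfrak{H}(\cX_{\vec\theta})$ is exactly the intended mechanism.

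The one point worth tightening is the invariance argument. Your crossing-form computation is correct at \emph{regular} crossings, but Theorem \ref{masform} (and hence your signature comparison) only determines the Maslov index when every crossing is regular; for the general formulas \eqref{morsmastperf} and \eqref{mormasperf} no positivity hypothesis on $\dot V_t$ is imposed, so crossings on $[\tau,1]$ need not be regular. To close the gap you should either (a) perturb $\Upsilon$ rel endpoints to a path with only regular crossings and invoke homotopy invariance on both sides, or (b) argue directly from the spectral-flow definition \eqref{dfnMInd}: since $T_\mathfrak{H}$ is a bounded bijective symplectomorphism, the two Souriau paths $s\mapsto S_{\cX_{\vec\theta}}(\Upsilon(s))$ and $s\mapsto S_{\cX^\mathfrak{H}_{\vec\theta}}(\Upsilon_\mathfrak{H}(s))$ differ by conjugation with a fixed unitary up to a continuous deformation of the complex structure, so their spectral flows through $-1$ coincide (this is the standard fact that the Maslov index depends only on the symplectic structure, not on the auxiliary inner product, cf.\ \cite{BbF95,F04}). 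Either route is short; you have correctly anticipated that this is the only nontrivial step.
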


\begin{theorem}
	Let $\Omega$ be an open bounded star-shaped Lipschitz domain in $\R^n$ centered at zero, and assume that $V\in C^1(\R^n, \R^{m\times m})$ and  $V(x)=V(x)^{\top}$, $x\in\Omega$. Denote $\cX_D^\mathfrak{H}=T(\dom\big(-\Delta_{D,\Omega})\big)$. Then the Morse index of the Schr\"{o}dinger operator with the Dirichlet boundary conditions and, for $\tau\in(0,1]$ sufficiently small, the Maslov index of the path $\Upsilon_\mathfrak{H}:[\tau,1]\rightarrow F\Lambda(\cX_D^\mathfrak{H})$, defined by $t\mapsto T\big(\ker\big(-\Delta_{\max}+V_{t,\R}\big)\big)$ for all $t\in[\tau,1]$, are related as follows:
	\begin{equation}\no
	2\mo_{\C}\big(-\Delta_{D,\Omega}+V\big)=-{\mi \big(\Upsilon,\cX_D^{\mathfrak{H}}\big)}.
	\end{equation}
Assume, in addition, that the matrix $V(0)$ is invertible. Denote $\cX_N^\mathfrak{H}=T(\dom\big(-\Delta_{N,\Omega})\big)$. Then the Morse index of the Schr\"{o}dinger operator with the Neumann boundary conditions and, for  $\tau\in(0,1]$ sufficiently small, the Maslov index of the path $\Upsilon_\mathfrak{H}:[\tau,1]\rightarrow F\Lambda(\cX_N^\mathfrak{H})$, defined by $t\mapsto T\big(\ker\big(-\Delta_{\max}+V_{t,\R}\big)\big)$ for all $t\in[\tau,1]$, are related as follows:
\begin{equation}\no
2\mo_{\C}\big(-\Delta_{N,\Omega}+V\big)=-{\mi \big(\Upsilon,\cX_N^{\mathfrak{H}}\big)}+2\mo(V(0)).
\end{equation}
\end{theorem}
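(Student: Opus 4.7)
The strategy is to transport the Morse--Maslov identities already established in the space of abstract boundary values (Theorem \ref{dir} for the Dirichlet case and Theorem \ref{dirn} for the Neumann case) to the concrete boundary space $\mathfrak{H}$ via the symplectomorphism $T_\mathfrak{H}\colon\cH_\Delta\to\mathfrak{H}$ of Theorem \ref{symmor}. Because $T_\mathfrak{H}$ is a bounded isomorphism preserving the symplectic form (see \eqref{c2}), it carries Lagrangian subspaces to Lagrangian subspaces and Fredholm pairs to Fredholm pairs with identical dimensions of intersection. In particular $\cX_{D/N}=\gamma(\dom(-\Delta_{D/N,\Omega}))$ is mapped to $\cX_{D/N}^\mathfrak{H}=T(\dom(-\Delta_{D/N,\Omega}))$, and the abstract path $\Upsilon_A(t)=\gamma(\ker(-\Delta_{\max}+V_{t,\R}))$ in $F\Lambda(\cX_{D/N})$ is carried to the concrete path $\Upsilon_\mathfrak{H}(t)=T(\ker(-\Delta_{\max}+V_{t,\R}))$ in $F\Lambda(\cX_{D/N}^\mathfrak{H})$, whose $C^1$ regularity is guaranteed by Theorem \ref{lagrsets}{\it (iii)}.

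The essential step is the invariance of the Maslov index under the symplectomorphism:
\begin{equation*}
\mi\big(T_\mathfrak{H}\circ\Upsilon_A,\,T_\mathfrak{H}(\cX)\big)=\mi(\Upsilon_A,\cX)
\end{equation*}
for every $C^1$ path $\Upsilon_A$ in $F\Lambda(\cX)$. I would prove this via the crossing form characterization of Theorem \ref{masform}. At a crossing $t_*$ with local graph representation $\Upsilon_A(t)=\{u+R_t u:u\in\Upsilon_A(t_*)\}$, one transports this representation to $\Upsilon_\mathfrak{H}$ by choosing any topological complement of $T_\mathfrak{H}\Upsilon_A(t_*)$ in $\mathfrak{H}$ and setting $\tilde R_t=T_\mathfrak{H} R_t T_\mathfrak{H}^{-1}\big|_{T_\mathfrak{H}\Upsilon_A(t_*)}$. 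Differentiating at $t=t_*$ and invoking \eqref{c2} yields
\begin{equation*}
\omega_\mathfrak{H}\big(T_\mathfrak{H} u,\,\dot{\tilde R}_{t_*}T_\mathfrak{H} v\big)=\omega_{\cH_\Delta}\big(u,\dot R_{t_*}v\big),
\end{equation*}
so the two crossing forms agree under $T_\mathfrak{H}$ and share the same signature. Combining this crossing-by-crossing identity with Theorem \ref{masform} and the additivity of the Maslov index delivers the claimed invariance.

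With the invariance in hand, the Dirichlet assertion follows at once from \eqref{dirf1} of Theorem \ref{dir} by replacing $\mi(\Upsilon_A,\cX_D)$ with $\mi(\Upsilon_\mathfrak{H},\cX_D^\mathfrak{H})$; the Neumann assertion follows analogously from the first displayed formula in Theorem \ref{dirn}. The principal obstacle is executing the crossing-form transport cleanly: $T_\mathfrak{H}$ is a symplectic and topological isomorphism but \emph{not} an isometry, so orthogonal complements in $\cH_\Delta$ are not mapped to orthogonal complements in $\mathfrak{H}$, while Theorem \ref{masform} is stated using an orthogonal complement. The resolution is to verify (or cite from \cite{F04}) that the signature of the crossing form is a symplectic invariant insensitive to the specific topological complement used to represent the path locally as a graph, after which the argument proceeds as above.
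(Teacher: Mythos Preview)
Your approach is correct and is precisely the argument the paper intends: the theorem is stated without proof because it is meant to follow immediately from Theorems \ref{dir} and \ref{dirn} once one transports everything through the symplectomorphism $T_\mathfrak{H}$ of Theorem \ref{symmor}, with Theorem \ref{lagrsets} supplying the required regularity of the transported path. Your identification of the one genuine technical point—that the crossing form's signature is independent of the choice of topological complement, hence survives a non-isometric symplectomorphism—is exactly right, and your intrinsic computation $\cQ_{t_*,\cX}(u,u)=\omega(u,\dot u(t_*))$ for any smooth section $u(t)\in\Upsilon(t)$ with $u(t_*)=u$ (using that $\omega$ vanishes on the Lagrangian $\Upsilon(t_*)$) is the clean way to see it.
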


To conclude this section we would like to compare the Lagrangian subspaces used in \cite{CJLS} and the ones defined in Theorem \ref{lagrsets}. This comparison shows that although the results in \cite{CJLS} and in the current paper are in the same spirit, neither of them follow from the other.
For simplicity, we will assume that $\partial\Omega$ is $C^{1,r}$ with $r>1/2$, so that $N^{1/2}(\partial\Omega)=H^{1/2}(\partial\Omega)$ and $(N^{1/2}(\partial\Omega))^*=H^{-1/2}(\partial\Omega)$. 
Let $V_t\in L^{\infty}(\Om)$ and consider two sets, 
\begin{align*}
&\mathcal{K}_{\lambda,t}=\{u\in H^{1}(\Om)\,|\,-\Delta u+V_tu-\lambda u=0 \,\,\hbox{in}\,\,H^{-1}(\Om)\},\\
&K_{\lambda,t}=\ker(-\Delta_{\max}+V_t-\lambda)\\
&\quad\quad=\{u\in L^{2}(\Om)\,|\,-\Delta u+V_tu-\lambda u=0\,\, \,
\hbox{in sence of distributions}\}.
\end{align*}
\begin{lemma} \lb{4.7} Assume that $V_t\in L^{\infty}(\Om)$. Then $\mathcal{K}_{\lambda,t}\subset K_{\lambda,t}$.
\end{lemma}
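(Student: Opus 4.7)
The plan is to show that any $u\in\mathcal{K}_{\lambda,t}$ automatically has enough regularity to lie in $\dom(-\Delta_{\max})$, and then that the PDE it satisfies in $H^{-1}(\Omega)$ upgrades to the distributional identity defining $K_{\lambda,t}$. The key observation is that once the lower-order term is in $L^2(\Omega)$, the equation forces $\Delta u$ to be in $L^2(\Omega)$ as well.

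First, I would fix $u\in\mathcal{K}_{\lambda,t}$, so by definition $u\in H^1(\Omega)\subset L^2(\Omega)$. Since $V_t\in L^{\infty}(\Omega)$, multiplication by $V_t$ is bounded on $L^2(\Omega)$, giving $V_tu\in L^2(\Omega)$. Consequently $(\lambda I - V_t)u\in L^2(\Omega)$ as well.

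Next, I would rewrite the identity $-\Delta u + V_tu - \lambda u = 0$, valid a priori in $H^{-1}(\Omega)$, in the form $-\Delta u = \lambda u - V_tu$ in $H^{-1}(\Omega)$. The right-hand side belongs to $L^2(\Omega)$, so via the continuous embedding $L^2(\Omega)\hookrightarrow H^{-1}(\Omega)$ one concludes $\Delta u\in L^2(\Omega)$. Combined with $u\in L^2(\Omega)$, this is exactly the defining condition for $u\in\dom(-\Delta_{\max})$.

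Finally, since any identity holding in $L^2(\Omega)$ certainly holds in the sense of distributions on $\Omega$, we have $-\Delta u + V_tu - \lambda u = 0$ distributionally, i.e. $u\in\ker(-\Delta_{\max}+V_t-\lambda)=K_{\lambda,t}$. There is no real obstacle here: the argument is a straightforward bootstrap from $H^{-1}$-regularity to $L^2$-regularity of $\Delta u$, made possible by the $L^\infty$ assumption on the potential. The only point to keep in mind is the natural identification $L^2(\Omega)\subset H^{-1}(\Omega)$ via the pivot space structure, which ensures that the equation read in $H^{-1}(\Omega)$ coincides with the equation read distributionally once both sides lie in $L^2(\Omega)$.
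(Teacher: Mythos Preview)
Your proof is correct and follows essentially the same route as the paper's: both arguments start from the weak (i.e., $H^{-1}$) identity, use $V_t\in L^\infty(\Omega)$ to see that $(\lambda-V_t)u\in L^2(\Omega)$, and conclude that the equation holds in the sense of distributions with $\Delta u\in L^2(\Omega)$. The only cosmetic difference is that the paper writes out the weak formulation $(\nabla u,\nabla\phi)_{L^2}=((\lambda-V_t)u,\phi)_{L^2}$ for $\phi\in C_0^\infty(\Omega)$ and invokes Green's formula explicitly, whereas you phrase the same step via the embedding $L^2(\Omega)\hookrightarrow H^{-1}(\Omega)$.
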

\begin{proof}
 Let $u\in\mathcal{K}_{\lambda,t}$, i.e. $u\in H^1(\Om)$ and  $(\nabla u,\nabla v)_{L^2(\Om)}=((\lambda-V_t)u,v)_{L^2(\Om)}$ for any $v\in H^{1}_0(\Om)$. 
 In particular, $(\nabla u,\nabla \phi)_{L^2(\Om)}=((\lambda-V_t)u,\phi)_{L^2(\Om)}$ for any $\phi\in C^{\infty}_0(\Om)$. 
 Applying Green's formula, we arrive at
 \begin{equation}
  (u, -\Delta\phi)_{L^2(\Om)}=(\nabla u,\nabla \phi)_{L^2(\Om)}=((\lambda-V_t)u,\phi)_{L^2(\Om)}\,\,
  \hbox{for any}\,\,\phi\in C^{\infty}_0(\Om).
 \end{equation}
Hence, $u\in K_{\lambda,t}$.
\end{proof}

We are ready to compare the subspaces used in \cite{CJLS} and in 
Theorem \ref{lagrsets}. Indeed,
the subspaces from \cite{CJLS} are of the type tr$(\cK_{\lambda,t})$, where \[\dom(\tr)=\cD:=\{u\in H^1(\Omega)\,\big|\,\Delta u\in L^2(\Omega)\,\text{ in $H^{-1}(\Om)$}\}\] and the map $\tr$ is defined by
\begin{align}\label{dfnTr}
\tr\colon\cD\to H^{1/2}(\dOm)\times H^{-1/2}(\dOm),\quad \tr u=(\gaD  u,\tilde\gamma_N u).
\end{align}
Here, $\gamma_N$ is the weak Neumann trace defined in \eqref{weaknt}.
First, we notice that the operators $T$ and tr do not agree even on the intersection of their domains. Secondly, in general $T(\cK_{\lambda,t})$  is not a Lagrangian subspace in $\mathfrak{H}$. We shall show this in case $V_t=0$ and $\lambda=0$, in fact we are claiming that $T(\cK_{0,t})$ doesn't not obey maximality assumption. By Lemma \ref{4.7}, $T(\cK_{0,t})\subset T(K_{0,t})$, moreover $T(K_{0,t})$ is Lagrangian in $\mathfrak{H}$, thus statement is proved, provided the inclusion is strict. Recall that $\gamma_D$ and $\gd$ agree on $H^1(\Omega)$, thus $\gd(\cK_{0,t})=\gamma_D(\cK_{0,t})\subset H^{1/2}(\partial\Omega)$, on the other hand $\gd(K_{0,t})=H^{-1/2}(\partial\Omega)$, so that sets of the first coordinates of $T(K_{0,t})$ and $T(\cK_{0,t})$ are not equal, hence the inclusion is strict.
This concludes the proof of the fact that the subspaces in \cite{CJLS} and in Theorem \ref{lagrsets} are indeed different.


\appendix\section{Dirichlet and Neumann trace operators}\label{appA}

In this appendix we recall several definitions and facts about
 various types of Dirichlet and Neumann trace operators recently discussed in \cite{GM10} and \cite{GM08}.

\begin{hypothesis}\lb{6.1L}
	Let $n\in\mathbb{N},n\geq2$, and assume that $\Om\subset\mathbb{R}^n$ is a bounded Lipschitz domain.
\end{hypothesis}

First, we define the strong trace operators.
Let us introduce the boundary trace operator $\gaD^0$ (the Dirichlet
trace) by
\begin{equation}\lb{2.4}
\gaD^0\colon C^0(\ol{\Om})\to C^0(\dOm), \quad
\gaD  ^0 u = u|_\dOm .
\end{equation}
By the standard trace theorem, see, e.g., \cite[Proposition 4.4.5]{T11}, there exists a bounded, surjective Dirichlet
trace operator 
\begin{equation}
\gaD  \colon H^{s}(\Om)\to H^{s-1/2}(\dOm) \hookrightarrow
L^2(\dOm), \quad 1/2<s<3/2.
\lb{6.1}
\end{equation}
Furthermore, the map has a bounded right inverse, i.e., given any $f\in H^{s-1/2}(\dOm)$ there exists $u\in H^{s}(\Om)$ 
such that $\gaD   u=f$ and $\|u\|_{H^{s}(\Om)}\leq C\|f\|_{H^{s-1/2}(\dOm)}$.

Next, retaining Hypothesis \ref{6.1L}, we introduce the Neumann trace operator $\gaN$
\begin{equation}\lb{Nstrong}
\gaN=\nu\cdot\gaD\nabla  \colon H^{s+1}(\Om)\to L^2(\dOm), \quad 1/2<s<3/2,
\end{equation}
where $\nu$ denotes the outward pointing normal unit vector to $\dOm$.
Furthermore, one can introduce the weak Neumann trace operator $\tilde\gamma_N$ by
\begin{equation}\lb{weaknt}
\tilde\gamma_N:\{u\in H^{s+1/2}(\Om)\,|\,\Delta u\in H^{s_0}(\Om)\}\to H^{s-1}(\dOm),\,\,s\in(0,1),\,s_0>-1/2.
\end{equation}

Assuming Hypothesis \ref{6.1L}, we now introduce the space
\begin{equation}\label{dfnN12}
N^{1/2}(\dOm):=\{g\in L^{2}(\dOm)\,|\,g\nu_j\in H^{1/2}(\dOm),\,1\leq j\leq n\},
\end{equation}
where the $\nu_j$'s are the components of $\nu$. We equip this space with the natural norm
\begin{equation}
\|g\|_{N^{1/2}(\dOm)}:=\sum_{j=1}^n\|g\nu_j\|_{H^{1/2}(\dOm)},
\end{equation}
and note that $N^{1/2}(\partial \Omega)=H^{1/2}(\partial \Omega)$ provided $\Omega$ is a $C^{1,r}$ domain with $r>1/2$, \cite[Lemma 6.2]{GM10}
\begin{lemma}[\cite{GM10}, Lemma 6.3]
	Assume Hypothesis \ref{6.1L}. Then the Neumann trace operator $\gaN$ considered in the context
	\begin{equation}\lb{gaN0}
	\gaN:H^2(\Om)\cap H^1_0(\Om)\to N^{1/2}(\dOm),
	\end{equation}
	is well-defined, linear, bounded, onto, and with a linear, bounded right-inverse. 
	In addition, the null space of $\gaN$ in \eqref{gaN0}
	is $H^2_0(\Om)$, the closure of $C^{\infty}_0(\Om)$ in $H^2(\Om)$.
\end{lemma}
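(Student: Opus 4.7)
The plan is to verify the four claims (well-definedness together with boundedness, identification of the kernel, surjectivity, and existence of a bounded right inverse) in sequence. The key observation driving everything is that membership in $H^1_0(\Om)$ forces the tangential components of $\gaD(\nabla u)$ to vanish on $\dOm$, so that the full first-order boundary data is controlled by the scalar $\gaN u$ via the pointwise identity $\gaD(\partial_j u)=(\gaN u)\nu_j$ for $1\le j\le n$.

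First I would establish well-definedness and boundedness. For $u\in H^2(\Om)\cap H^1_0(\Om)$ the gradient $\nabla u$ lies in $H^1(\Om;\R^n)$, so by \eqref{6.1} each $\gaD(\partial_j u)\in H^{1/2}(\dOm)$ with norm controlled by $\|u\|_{H^2(\Om)}$. Since $\gaD u=0$, the tangential derivatives of $u$ vanish on $\dOm$, hence $\gaD(\nabla u)=(\gaN u)\,\nu$ componentwise. This simultaneously places $\gaN u$ in $N^{1/2}(\dOm)$ and produces the estimate
\begin{equation*}
\|\gaN u\|_{N^{1/2}(\dOm)}=\sum_{j=1}^{n}\|\gaD(\partial_j u)\|_{H^{1/2}(\dOm)}\le C\|u\|_{H^2(\Om)}.
\end{equation*}
The kernel assertion follows from the same identity: if $\gaN u=0$ and $\gaD u=0$, then each $\partial_j u$ has vanishing Dirichlet trace, so $\partial_j u\in H^1_0(\Om)$; a standard mollification argument on a bounded Lipschitz domain then yields $u\in H^2_0(\Om)$, while the reverse inclusion $H^2_0(\Om)\subset\ker\gaN$ is immediate by approximation from $C^{\infty}_0(\Om)$.

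Surjectivity together with the bounded right inverse is the main obstacle. Given $g\in N^{1/2}(\dOm)$, the vector field $g\nu=(g\nu_1,\dots,g\nu_n)$ lies in $H^{1/2}(\dOm;\R^n)$ by definition of $N^{1/2}(\dOm)$, so the bounded right inverse of the vectorial Dirichlet trace furnishes $\mathbf F\in H^1(\Om;\R^n)$ with $\gaD\mathbf F=g\nu$ and $\|\mathbf F\|_{H^1(\Om;\R^n)}\le C\|g\|_{N^{1/2}(\dOm)}$. It remains to produce $u\in H^2(\Om)\cap H^1_0(\Om)$ whose gradient trace equals $\gaD\mathbf F$. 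I would organize this via a partition of unity on $\dOm$: in each local chart flattening the boundary to $\{y_n=0\}$, construct a boundary layer of the form $u(y',y_n)=-y_n\,w(y',y_n)$, where $w$ is a carefully chosen $H^1$-extension of the local boundary data and the cofactor $y_n$ boosts the regularity of the product to $H^2$ on the half-space. Patching the local contributions and absorbing the gluing error by a correction in $H^2_0(\Om)$ (which leaves both the Dirichlet and Neumann traces invariant) yields a $u$ depending linearly and boundedly on $g$. The delicate technical step is precisely this normal-direction promotion from $H^1$ to $H^2$, which crucially uses that each $g\nu_j$ lies in $H^{1/2}(\dOm)$ rather than merely $L^2(\dOm)$, while still remaining compatible with the limited regularity of a Lipschitz boundary.
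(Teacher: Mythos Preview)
The paper does not give a proof of this lemma; it is quoted verbatim from \cite[Lemma~6.3]{GM10} in the appendix and used as a black box. So there is no ``paper's own proof'' to compare against, and your proposal should be read as an attempt to supply an argument the present paper omits.

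Your treatment of well-definedness, boundedness, and the kernel is essentially the standard one and is correct: the identity $\gaD(\partial_j u)=(\gaN u)\nu_j$ for $u\in H^2(\Om)\cap H^1_0(\Om)$ is exactly the reason $N^{1/2}(\dOm)$ is the natural target, and the characterisation of $H^2_0(\Om)$ as those $H^2$-functions with vanishing Dirichlet and Neumann traces on a bounded Lipschitz domain is classical.

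The surjectivity sketch, however, has a genuine gap. Two issues. First, the claim that the factor $y_n$ ``boosts'' a generic $H^1$-extension $w$ so that $y_n w\in H^2$ is false as stated: for $j<n$ one has $\partial_j\partial_k(y_n w)=y_n\,\partial_j\partial_k w$, which need not be in $L^2$ unless $w$ is a \emph{specific} extension (e.g.\ Poisson-type) for which weighted second-derivative estimates hold. You hint at this with ``carefully chosen'', but the choice is the whole content of the step and cannot be left implicit. Second, and more seriously, bi-Lipschitz changes of variables do not preserve $H^2$, so flattening a merely Lipschitz boundary, constructing an $H^2$ function on the half-space, and pulling it back does not in general produce an $H^2(\Om)$ function. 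The argument in \cite{GM10} avoids this by working intrinsically with boundary layer potentials and the mapping properties of the relevant singular integral operators on Lipschitz domains, rather than by local flattening. Your partition-of-unity scheme would need a substantially different local mechanism to survive Lipschitz regularity.
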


\begin{lemma}[\cite{GM10}, Theorem 6.4]\lb{wd}
	Assume Hypothesis \ref{6.1L}. Then there exists a unique linear, bounded operator 
	\begin{equation}
	\gd:\{u\in L^{2}(\Om)\,|\,\Delta u\in L^{2}(\Om)\}\to (N^{1/2}(\dOm))^*,
	\end{equation}
	which is compatable with the Dirichlet
	trace, introduced in \eqref{6.1} and further extended in \cite[Lemma 3.1]{GM10}, in the sense that for 
	each $s\geq1/2$ one has 
	\begin{equation}\lb{comp}
	\gd u=\gaD u\,\,\,\hbox{for every}\, u\in H^{s}(\Om)\,\hbox{with}\,\,\Delta u\in L^{2}(\Om).
	\end{equation}
	Furthermore, this extension of the Dirichlet trace operator has dense range and allows for the following integration by 
	parts formula
	\begin{equation}
	\lnoh \gaN w, \gd u\rnohs= (\Delta w,u)_{L^{2}(\Om)}-(w,\Delta u)_{L^{2}(\Om)}, 
	\end{equation}
	valid for every $u\in L^{2}(\Om)$ with $\Delta u\in L^{2}(\Om)$ and every $w\in H^{2}(\Om)\cap H^{1}_0(\Om)$.
\end{lemma}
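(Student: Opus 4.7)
The plan is to define $\gd$ by a Green's-type duality using the surjectivity of the ``strong'' Neumann trace from \eqref{gaN0}, and then to verify compatibility, density of range, and the integration by parts formula as consequences of the definition. For $u\in L^{2}(\Om)$ with $\Delta u\in L^{2}(\Om)$, I would declare $\gd u\in(N^{1/2}(\dOm))^{*}$ by
\[
\lnoh \gaN w,\gd u\rnohs
:=(\Delta w,u)_{L^{2}(\Om)}-(w,\Delta u)_{L^{2}(\Om)},
\qquad w\in H^{2}(\Om)\cap H^{1}_{0}(\Om).
\]
Since by the lemma preceding this statement every $g\in N^{1/2}(\dOm)$ is of the form $g=\gaN w$ for some $w\in H^{2}(\Om)\cap H^{1}_{0}(\Om)$, with a linear bounded right inverse of $\gaN$ at our disposal, this formula does define a linear functional on $N^{1/2}(\dOm)$ once one checks independence of the representative $w$.

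The independence step is the heart of the argument: if $\gaN w_{1}=\gaN w_{2}$ then $v:=w_{1}-w_{2}$ lies in $\ker\gaN=H^{2}_{0}(\Om)$, so I would approximate $v$ in the $H^{2}(\Om)$ norm by $\varphi_{k}\in C^{\infty}_{0}(\Om)$ and apply the classical Green identity $(\Delta\varphi_{k},u)_{L^{2}}=(\varphi_{k},\Delta u)_{L^{2}}$, which has no boundary contribution. Passing to the limit is legal because both sides of the identity are continuous in $v\in H^{2}(\Om)$ against $u,\Delta u\in L^{2}(\Om)$. Boundedness of $\gd$ then follows at once: choosing the bounded right inverse $w=Rg$ gives
\[
\bigl|\lnoh\gaN w,\gd u\rnohs\bigr|
\leq\bigl(\|\Delta w\|_{L^{2}(\Om)}+\|w\|_{L^{2}(\Om)}\bigr)\bigl(\|u\|_{L^{2}(\Om)}+\|\Delta u\|_{L^{2}(\Om)}\bigr)
\leq C\|g\|_{N^{1/2}(\dOm)}\|u\|_{gr(-\Delta)}.
\]
Uniqueness of the extension is forced by the same identity combined with the surjectivity of $\gaN$ in \eqref{gaN0}: two candidate extensions must agree against every $g=\gaN w$, hence coincide as elements of $(N^{1/2}(\dOm))^{*}$.

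For the compatibility \eqref{comp}, I would take $u\in H^{s}(\Om)$ with $s\geq 1/2$ and $\Delta u\in L^{2}(\Om)$, approximate $u$ by $u_{k}\in C^{\infty}(\overline{\Om})$ in the norm $\|\cdot\|_{H^{s}(\Om)}+\|\Delta\cdot\|_{L^{2}(\Om)}$, and apply the classical divergence theorem to the pair $(w,u_{k})$, obtaining
\[
(\Delta w,u_{k})_{L^{2}(\Om)}-(w,\Delta u_{k})_{L^{2}(\Om)}
=\int_{\dOm}\gaN w\cdot\gaD u_{k}\,d\sigma,
\]
since $\gaD w=0$. The right hand side is exactly the pairing of $\gaN w=g$ with $\gaD u_{k}\in L^{2}(\dOm)\subset (N^{1/2}(\dOm))^{*}$; passing to the limit identifies $\gd u$ with $\gaD u$ in $(N^{1/2}(\dOm))^{*}$. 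The Green formula at the end of the lemma is now simply a rewriting of the definition.

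Density of the range is the last point and is done by exhibiting enough regular traces: since $H^{2}(\Om)\subset\{u\in L^{2}(\Om):\Delta u\in L^{2}(\Om)\}$, compatibility gives $\gaD(H^{2}(\Om))\subset\ran(\gd)$, and $\gaD(H^{2}(\Om))=H^{3/2}(\dOm)$, which is dense in $L^{2}(\dOm)$ and, a fortiori, in $(N^{1/2}(\dOm))^{*}$. The main technical obstacle is the independence of the representative $w$: it is precisely the point where one must know that $H^{2}_{0}(\Om)$ is the exact kernel of $\gaN$ on $H^{2}(\Om)\cap H^{1}_{0}(\Om)$ (provided by the lemma preceding this statement) and that $C^{\infty}_{0}(\Om)$ is dense in $H^{2}_{0}(\Om)$; without these two facts, the definition would be ambiguous.
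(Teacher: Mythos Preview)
The paper does not give its own proof of this lemma: it is quoted verbatim as \cite[Theorem 6.4]{GM10} and used as a black box in Section~\ref{sec4}. Your outline is the standard duality construction behind that result and is essentially correct; the only point to tighten is the uniqueness argument, which as written presupposes the Green identity for an arbitrary competing extension---what you actually need there is that $C^\infty(\overline{\Om})$ (or any $H^s$, $s\ge 1/2$, class with $\Delta u\in L^2$) is dense in $\dom(-\Delta_{\max})$ for the graph norm, so that compatibility alone pins down the operator by continuity.
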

Assuming
Hypothesis \ref{6.1L} we define the space
\begin{equation}
N^{3/2}(\dOm):=\{g\in H^{1}(\dOm)\,|\,\nabla_{\tan}g\in (H^{1/2}(\dOm))^n\},
\end{equation}
equipped with the natural norm
$\|g\|_{N^{3/2}(\dOm)}:=\|g\|_{L^{2}(\dOm)}+\|\nabla_{\tan}g\|_{H^{1/2}(\dOm)^n}$.
Here, the tangential gradient operator $\nabla_{\text{tan}}: H^1(\partial \Omega)\mapsto L^2(\partial \Omega)^n$ is defined as $$f\mapsto \Big(\sum\limits_{k=1}^{n}\nu_k\frac{\partial f}{\partial \tau_{k,l}}\Big)_{l=1}^n,$$
and $\frac{\partial}{\partial \tau_{k,l}}$  is the tangential derivative, the bounded operator between $H^s(\partial \Omega)$ and $H^{s-1}(\partial \Omega),\ 0\leq s\leq 1$, that extends the operator 
$$\frac{\partial}{\partial \tau_{k,l}}:\psi \mapsto \nu_k(\partial_{l}\psi)\big|_{\partial \Omega}-\nu_l(\partial_{k}\psi)\big|_{\partial \Omega},$$
originally defined for $C^1$ function $\psi$ in a neighbourhood of $\partial \Omega$. Also, $N^{3/2}(\partial \Omega)=H^{3/2}(\partial \Omega)$ provided $\Omega$ is a $C^{1,r}$ domain with $r>1/2$,  \cite[Lemma 6.8]{GM10}.
\begin{lemma}[\cite{GM10}, Lemma 6.9] Assume Hypothesis \ref{6.1L}. Then the Dirichlet trace operator $\gamma_D$ considered in the context
	\begin{equation}\lb{gaD0}
	\gaD:\{u\in H^2(\Om)\,|\,\gaN u=0\}\to N^{3/2}(\dOm),
	\end{equation}
	is well-defined, linear, bounded, onto, and with a linear, bounded right-inverse. 
	In addition, the null space of $\gaD$ in \eqref{gaD0}
	is  $H^2_0(\Om)$.
\end{lemma}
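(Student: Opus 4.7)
The plan is to verify the three asserted properties in turn—well-definedness and boundedness of $\gaD$ on the constrained domain, surjectivity with a bounded right-inverse, and identification of the kernel—using standard trace theory on Lipschitz domains together with the tangential-gradient description of $N^{3/2}(\dOm)$.

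For well-definedness and boundedness, I would fix $u\in H^2(\Om)$ with $\gaN u=0$. Each derivative $\partial_j u$ lies in $H^1(\Om)$, so the ordinary trace theorem gives $\gaD(\partial_j u)\in H^{1/2}(\dOm)$. Using the distributional identity on the boundary
\begin{equation*}
\frac{\partial(\gaD u)}{\partial \tau_{k,l}}=\nu_k\gaD(\partial_l u)-\nu_l\gaD(\partial_k u),
\end{equation*}
together with the Neumann-vanishing condition $\sum_k \nu_k\gaD(\partial_k u)=0$, one reconstructs $\gaD(\partial_j u)$ from the tangential derivatives of $\gaD u$, showing that $\nu_j\gaD u\in H^1(\dOm)$ and that $\nabla_{\mathrm{tan}}(\nu_j\gaD u)\in H^{1/2}(\dOm)^n$. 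Estimating each piece by $\|u\|_{H^2(\Om)}$ gives boundedness into $N^{3/2}(\dOm)$.

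For surjectivity with a bounded right-inverse, given $g\in N^{3/2}(\dOm)$ I would first produce an unrestricted extension $G\in H^2(\Om)$ with $\gaD G=g$ and $\|G\|_{H^2}\lesssim \|g\|_{N^{3/2}(\dOm)}$; this is available via local flattenings of the Lipschitz boundary and the standard $H^2$ extension adapted to $N^{3/2}$ (the smoothness loss from the Lipschitz assumption is precisely compensated by the definition of $N^{3/2}$). Then $f:=\gaN G\in N^{1/2}(\dOm)$, and by the preceding lemma in the appendix (the surjective Neumann trace on $H^2(\Om)\cap H^1_0(\Om)$) there exists $v\in H^2(\Om)\cap H^1_0(\Om)$ with $\gaN v=f$ and $\|v\|_{H^2}\lesssim \|f\|_{N^{1/2}(\dOm)}$. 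Setting $u:=G-v$ gives $u\in H^2(\Om)$ with $\gaN u=\gaN G-\gaN v=0$ and $\gaD u=\gaD G-0=g$, with the construction linear and bounded.

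The kernel identification is the main analytic step, and the inclusion $H^2_0(\Om)\subset\ker\gaD$ is immediate from the density of $C^\infty_0(\Om)$ in $H^2_0(\Om)$. For the reverse, take $u\in H^2(\Om)$ with $\gaD u=0$ and $\gaN u=0$; note that $\gaD u=0$ forces all tangential components of $\gaD(\nabla u)$ to vanish, and $\gaN u=0$ takes care of the normal component, so $\gaD(\nabla u)=0$ componentwise. Extending $u$ by zero to $\tilde u$ on $\R^n$, integration by parts against test functions then shows that the distributional first and second derivatives of $\tilde u$ coincide with the zero-extensions of those of $u$, so $\tilde u\in H^2(\R^n)$ with $\supp\tilde u\subset\overline{\Om}$. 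A convolution-with-mollifier plus interior cutoff argument then produces a sequence in $C^\infty_0(\Om)$ converging to $u$ in $H^2$, giving $u\in H^2_0(\Om)$. The hardest part is the $H^2$-regularity of $\tilde u$ across the Lipschitz boundary, where the simultaneous vanishing of both traces is used in an essential way.
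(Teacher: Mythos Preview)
The paper does not prove this lemma at all; it is quoted verbatim from \cite[Lemma 6.9]{GM10} as background in the appendix, with no argument supplied. So there is no ``paper's own proof'' to compare against, and your sketch should be judged on its own merits.

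Your outline for well-definedness is essentially right, though the phrasing is muddled: the condition ``$\nu_j\gaD u\in H^1(\dOm)$'' you write is the $N^{1/2}$ membership test, not the $N^{3/2}$ one. What you actually need (and what your tangential-derivative identity delivers) is that $(\nabla_{\tan}\gaD u)_l=\gaD(\partial_l u)-\nu_l\gaN u=\gaD(\partial_l u)\in H^{1/2}(\dOm)$ once $\gaN u=0$; that is the correct computation, and your boundedness claim follows. The kernel identification is also fine: from $\gaD u=0$ and $\gaN u=0$ the same identity gives $\gaD(\partial_j u)=0$, so $u$ and all $\partial_j u$ lie in $H^1_0(\Om)$, the zero extension lies in $H^2(\R^n)$, and mollification finishes.

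The surjectivity step, however, has a real gap. You assume the existence of an unrestricted extension $G\in H^2(\Om)$ with $\gaD G=g$ and $\|G\|_{H^2}\lesssim\|g\|_{N^{3/2}}$, and then correct the Neumann trace. But for a merely Lipschitz boundary it is not clear that $\gaD$ maps $H^2(\Om)$ onto $N^{3/2}(\dOm)$, nor even that $\gaD u\in N^{3/2}(\dOm)$ for a generic $u\in H^2(\Om)$ without the Neumann constraint (the obstruction is exactly the term $\nu_l\gaN u$, which need not lie in $H^{1/2}(\dOm)$ when $\nu$ is only $L^\infty$). So you are assuming something at least as strong as what you are trying to prove. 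A workable route is to build the extension directly from the data $g$ and the vector $(\nabla_{\tan}g)_l\in H^{1/2}(\dOm)$, prescribing simultaneously the Dirichlet trace and the full trace of the gradient (with normal component zero) and invoking an $H^2$ extension for the pair; this is where the actual work in \cite{GM10} lies, and your sketch does not supply it.
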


\begin{lemma}[\cite{GM10}, Theorem 6.10]
	Assume Hypothesis \ref{6.1L}. Then there exists a unique linear, bounded operator 
	\begin{equation}
	\gn:\{u\in L^{2}(\Om)\,|\,\Delta u\in L^{2}(\Om)\}\to (N^{3/2}(\dOm))^*,
	\end{equation}
	which is compatable with the Neumann
	trace, introduced in \eqref{Nstrong} and further extended in \cite[(3.14)]{GM10}, in the sense that for 
	each $s\geq3/2$ one has 
	\begin{equation}
	\gn u=\tilde\gamma_N u\,\,\,\hbox{for every}\, u\in H^{s}(\Om)\,\hbox{with}\,\,\Delta u\in L^{2}(\Om).
	\end{equation}
	Furthermore, this extension of the Neumann trace operator has dense range and allows for the following integration by 
	parts formula
	\begin{equation}
	\lnth \gaD w, \gn u\rnths= (w,\Delta u)_{L^{2}(\Om)}-(\Delta w,u)_{L^{2}(\Om)}, 
	\end{equation}
	valid for every $u\in L^{2}(\Om)$ with $\Delta u\in L^{2}(\Om)$ and every $w\in H^{2}(\Om)$ with $\gaN w=0$.
\end{lemma}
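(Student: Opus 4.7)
The plan is to construct $\gn$ by duality, using the preceding lemma based on \cite[Lemma 6.9]{GM10}, which asserts that $\gaD\colon\{w\in H^2(\Om)\mid \gaN w=0\}\to\nth$ is bounded and surjective, admits a bounded right inverse $R$, and has kernel $H_0^2(\Om)$. For $u\in L^2(\Om)$ with $\Delta u\in L^2(\Om)$, I will define a linear functional $\gn u$ on $\nth$ by
\[
\lnth g,\gn u\rnths:=(w,\Delta u)_{L^2(\Om)}-(\Delta w,u)_{L^2(\Om)},
\]
where $w\in H^2(\Om)$ is any function with $\gaN w=0$ and $\gaD w=g$; the integration-by-parts identity is then built into the definition by construction.

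The first check is that the right-hand side is independent of the choice of lift: two lifts differ by an element of $H_0^2(\Om)$, and density of $C_c^\infty(\Om)$ in $H_0^2(\Om)$ together with the standard Green identity for compactly supported smooth test functions kills the difference in the limit. Boundedness of $u\mapsto \gn u$ as a map into $(\nth)^*$ follows by choosing $w=Rg$ and invoking the bound on $R$, yielding $\|\gn u\|_{(\nth)^*}\le C(\|u\|_{L^2(\Om)}+\|\Delta u\|_{L^2(\Om)})$. Uniqueness of any extension satisfying the stated integration-by-parts formula is then immediate, since the range of $\gaD$ on $\{w\in H^2(\Om):\gaN w=0\}$ is all of $\nth$. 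Compatibility with $\tilde\gamma_N$ on $u\in H^s(\Om)$ with $s\ge 3/2$ and $\Delta u\in L^2(\Om)$ follows from the extended Green's identity $(w,\Delta u)_{L^2(\Om)}-(\Delta w,u)_{L^2(\Om)}=\lnth\gaD w,\tilde\gamma_N u\rnths$ already established in \cite{GM10} for such $u$ and any admissible $w$: two continuous functionals on $\nth$ agreeing on all of $\nth$ coincide.

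The main obstacle is density of the range of $\gn$ in $(\nth)^*$; here I would use Hahn-Banach plus reflexivity of $\nth$. Assume some $\phi\in\nth$ satisfies $\lnth\phi,\gn u\rnths=0$ for every $u\in L^2(\Om)$ with $\Delta u\in L^2(\Om)$, and fix a lift $w_\phi\in H^2(\Om)$ with $\gaN w_\phi=0$ and $\gaD w_\phi=\phi$. The hypothesis reads $(w_\phi,\Delta u)_{L^2(\Om)}=(\Delta w_\phi,u)_{L^2(\Om)}$ for all such $u$. I would then test against two families: first, $u=(-\Delta_{D,\Om})^{-1}f$ for arbitrary $f\in L^2(\Om)$ (here $0\notin\spec(-\Delta_{D,\Om})$), which combined with $\gaN w_\phi=0$ yields that $w_\phi$ is determined by $\Delta w_\phi$ modulo a harmonic correction; and second, harmonic $u\in L^2(\Om)$ with $\gd u$ arbitrary in $\nohs$ supplied by the Poisson-type solvability result \cite[Theorem 10.4]{GM10}, which kills that harmonic correction. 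The resulting identities force $w_\phi\in H_0^2(\Om)$, whence $\phi=\gaD w_\phi=0$ by the preceding lemma, proving density.
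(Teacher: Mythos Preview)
The paper does not supply its own proof of this lemma: it is simply quoted from \cite[Theorem 6.10]{GM10} as background in Appendix~A. So there is no ``paper's own proof'' to compare against; you are in effect reconstructing the argument of \cite{GM10}.

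Your duality construction of $\gn$ is correct and is exactly how such extended trace operators are built. Well-definedness (independence of the lift of $g$) follows from the kernel identification $H_0^2(\Om)$ in the preceding lemma, boundedness from the bounded right inverse $R$, and uniqueness from the surjectivity of $\gaD$ onto $\nth$. The compatibility step is also fine, assuming the Green identity for $u\in H^s(\Om)$, $s\ge 3/2$, already available from \cite{GM10}. All of this is the standard route.

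The density argument, however, is where your sketch has a genuine gap. First, you invoke \cite[Theorem~10.4]{GM10} to produce harmonic $u\in L^2(\Om)$ with prescribed $\gd u$; in the present paper that result is used only under the \emph{quasi-convex} Hypothesis~\ref{b1}, whereas the lemma you are proving is stated under the weaker Lipschitz Hypothesis~\ref{6.1L}. You cannot borrow a quasi-convex solvability statement here without further justification. Second, the phrases ``$w_\phi$ is determined by $\Delta w_\phi$ modulo a harmonic correction'' and ``kills that harmonic correction'' are not arguments; testing with $u=(-\Delta_{D,\Om})^{-1}f$ gives $-(w_\phi,f)=(\Delta w_\phi,(-\Delta_{D,\Om})^{-1}f)$, and testing with harmonic $u$ gives $(\Delta w_\phi,u)=0$, but it is not clear how you combine these two families of identities to force $w_\phi\in H_0^2(\Om)$. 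What you have actually shown is that $w_\phi\in\dom((-\Delta_{\max})^*)$; turning this into $\gaD w_\phi=0$ is precisely the delicate point, and the characterization of $\dom((-\Delta_{\max})^*)$ quoted later in the appendix already \emph{uses} $\gn$, so appealing to it would be circular. A cleaner, non-circular route to density is to use the compatibility you have already established: since $\gn u=\gaN u$ for $u\in H^2(\Om)$, it suffices to show that $\gaN(H^2(\Om))$ is dense in $(\nth)^*$, which in \cite{GM10} is handled directly from the structure of the spaces $N^{s}(\dOm)$ under the Lipschitz hypothesis alone.
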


Assuming Hypothesis \ref{6.1L}, we define, as in \cite[Lemma 7.1.]{GM10},
\begin{align}
-\Delta_{\max}&:\dom(-\Delta_{\max})\subset L^2(\Omega)\rightarrow L^2(\Omega),\no\\
\dom(-\Delta_{\max})&=\big\{u\in L^{2}(\Om)\big|\  \Delta u\in L^{2}(\Om)\big\},\no\\
-\Delta_{\max}u&=-\Delta u,\ \  (\text{in the sence of distribudtions}).\no
\end{align}
\begin{theorem}[\cite{GM10}]
	Assume Hypothesis \ref{6.1L}. Then the maximal Laplacian $-\Delta_{\max}$ is a closed, densely defined operator for which
	\begin{align*}
	&H^2_0(\Om)\subseteq\dom((-\Delta_{\max})^*)=\big\{u\in L^{2}(\Om)|\  \Delta u\in L^{2}(\Om), \ \gd(u)=0,\ \gn(u)=0\big\}.
	\end{align*}
\end{theorem}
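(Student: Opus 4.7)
The plan breaks into two parts: (A) closedness and density of $-\Delta_{\max}$, and (B) the identification of $\dom((-\Delta_{\max})^*)$ with the subspace cut out by the generalized trace conditions.

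For (A), density is immediate from the inclusion $C_0^{\infty}(\Om)\subseteq \dom(-\Delta_{\max})$ together with density of $C_0^{\infty}(\Om)$ in $L^2(\Om)$. Closedness follows from the distributional interpretation of the Laplacian: if $u_n\to u$ and $-\Delta u_n\to f$ in $L^2(\Om)$, passing to the limit in $\int_{\Om} u_n\,\Delta\varphi\,d^nx=-\int_{\Om}(\Delta u_n)\varphi\,d^nx$ for every test function $\varphi\in C_0^{\infty}(\Om)$ yields $-\Delta u=f$ in $\mathcal{D}'(\Om)$, so that $u\in\dom(-\Delta_{\max})$ and $-\Delta_{\max}u=f$.

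For the inclusion $H^2_0(\Om)\subseteq\dom((-\Delta_{\max})^*)$, I would observe that any $u\in H^2_0(\Om)$ is an $H^2$-limit of functions in $C_0^{\infty}(\Om)$, so $\gaD u=0$ and $\gaN u=0$ in the strong sense; the compatibility \eqref{comp} and its Neumann analogue then force $\gd u=0$ and $\gn u=0$, and clearly $\Delta u\in L^2(\Om)$. This shows that $H^2_0(\Om)$ is contained in the right-hand side of the claimed equality, so it suffices to prove the harder equality for $\dom((-\Delta_{\max})^*)$.

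For the equality (B), I would argue both inclusions as follows. For $\supseteq$, fix $u\in L^2(\Om)$ with $\Delta u\in L^2(\Om)$, $\gd u=0$, $\gn u=0$ and a test vector $v\in\dom(-\Delta_{\max})$. Decompose $v=v_D+v_h$, where $v_D\in H^2(\Om)\cap H^1_0(\Om)$ is the unique solution of $-\Delta v_D=-\Delta v$ with zero Dirichlet data (well-posed since $0\notin\spec(-\Delta_{D,\Om})$) and $v_h=v-v_D$ is harmonic in $L^2(\Om)$. Applying the integration by parts formula of Lemma \ref{wd} with $w=v_D$ handles the $v_D$-contribution (using $\gd u=0$), while $v_h$ is further lifted to an $H^2$-function with vanishing Neumann trace, after which the dual integration by parts formula handles the $v_h$-contribution (using $\gn u=0$). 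Summing, one obtains $(u,-\Delta v)_{L^2(\Om)}=(-\Delta u,v)_{L^2(\Om)}$, so that $u\in\dom((-\Delta_{\max})^*)$. Conversely, for $\subseteq$, first testing against $\varphi\in C_0^{\infty}(\Om)\subseteq\dom(-\Delta_{\max})$ forces $\Delta u\in L^2(\Om)$. Then, for arbitrary $v\in\dom(-\Delta_{\max})$, the Green-type defect $(u,-\Delta v)_{L^2(\Om)}-(-\Delta u,v)_{L^2(\Om)}$ must vanish. Using the two integration by parts identities this defect becomes a sum of boundary pairings in which $\gd u$ and $\gn u$ play the role of dual variables against $\gaN v$ and $\gaD v$. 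Since these boundary data can be made to sweep out dense subsets of $N^{1/2}(\dOm)$ and $N^{3/2}(\dOm)$ respectively by the surjectivity statements for $\gaN$ on $H^2\cap H^1_0$ and for $\gaD$ on $\{w\in H^2:\gaN w=0\}$, non-degeneracy of the pairings forces $\gd u=0$ and $\gn u=0$.

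The main obstacle is the $\supseteq$ half of (B): elements of $\dom(-\Delta_{\max})$ need not lie in $H^2(\Om)$ on a Lipschitz domain, so no single classical Green identity applies to arbitrary $v\in\dom(-\Delta_{\max})$. The decomposition $v=v_D+v_h$ together with the two low-regularity integration by parts formulas of the appendix is precisely the tool that compensates for this loss of interior regularity, and verifying that the harmonic lift $v_h$ fits into the second integration by parts identity is the delicate technical point.
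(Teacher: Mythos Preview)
The paper does not prove this statement; it is quoted from \cite{GM10} in the appendix without argument, so there is no in-paper proof to compare against.

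Your treatment of part (A) and of the inclusion $\subseteq$ in part (B) is correct and follows the standard route: testing with $\varphi\in C_0^\infty(\Om)$ forces $\Delta u\in L^2(\Om)$, and then the two integration-by-parts identities, combined with the surjectivity of $\gaN$ onto $N^{1/2}(\dOm)$ and of $\gaD$ onto $N^{3/2}(\dOm)$, force $\gd u=0$ and $\gn u=0$.

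There is, however, a genuine gap in your $\supseteq$ argument. You assert that the Dirichlet solution $v_D$ of $-\Delta v_D=-\Delta v$ lies in $H^2(\Om)\cap H^1_0(\Om)$, but on a general bounded Lipschitz domain the Dirichlet Laplacian does \emph{not} enjoy $H^2$-regularity: one only obtains $v_D\in\{w\in H^1_0(\Om):\Delta w\in L^2(\Om)\}$, which can be strictly larger than $H^2\cap H^1_0$ (polygonal domains with a reentrant corner already furnish counterexamples). Since the integration-by-parts formula of Lemma~\ref{wd} requires $w\in H^2\cap H^1_0$, it is not applicable to your $v_D$. The same obstruction recurs for the harmonic remainder $v_h$: the phrase ``lifted to an $H^2$-function with vanishing Neumann trace'' again presupposes an $H^2$-regularity statement that is unavailable under Hypothesis~\ref{6.1L} alone. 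One way to repair the argument is to observe that $\gd u=0$ and $\gn u=0$ together yield $(\Delta w,u)_{L^2}=(w,\Delta u)_{L^2}$ for every $w\in(H^2\cap H^1_0)+\{w\in H^2:\gaN w=0\}=H^2(\Om)$ (the sum is all of $H^2$ because $\gaN:H^2\cap H^1_0\to N^{1/2}(\dOm)$ admits a bounded right inverse), and then to invoke the density of $C^\infty(\overline{\Om})$, hence of $H^2(\Om)$, in $\dom(-\Delta_{\max})$ with respect to the graph norm---a density result that does hold on bounded Lipschitz domains but is itself a nontrivial input.
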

We also define the minimal operator by
\begin{align}
\dom(-\Delta_{\min})=H^2_0(\Om),\,-\Delta_{\min}u=-\Delta u.
\end{align}
\begin{theorem}[\cite{GM10}, Corollary 7.2]
	Assume Hypothesis \ref{6.1L}. Then the minimal Laplacian is a densely defined, symmetric operator which satisfies
	\begin{align}\lb{mm}
	-\Delta_{\min}\subseteq(-\Delta_{\max})^*,\,\,-\Delta_{\max}\subseteq(-\Delta_{\min})^*.
	\end{align}
\end{theorem}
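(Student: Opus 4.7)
The plan is to reduce every assertion in the theorem to a single Green-type identity and then read it off in two directions. Density of $-\Delta_{\min}$ is immediate, since $\dom(-\Delta_{\min})=H^2_0(\Omega)$ contains $C^\infty_0(\Omega)$, which is dense in $L^2(\Omega)$.

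The heart of the argument is the identity
\begin{equation*}
(-\Delta u, v)_{L^2(\Omega)} = (u, -\Delta v)_{L^2(\Omega)}, \qquad u \in H^2_0(\Omega),\ v \in \dom(-\Delta_{\max}).
\end{equation*}
To establish it I would apply the extended Green formula $\lnoh \gaN w, \gd f \rnohs = (\Delta w, f)_{L^2(\Omega)} - (w, \Delta f)_{L^2(\Omega)}$ from Lemma \ref{wd} with $w:=u$ and $f:=v$. This choice of arguments is admissible because $H^2_0(\Omega)\subset H^2(\Omega)\cap H^1_0(\Omega)$, while $\Delta v \in L^2(\Omega)$ by the very definition of $\dom(-\Delta_{\max})$. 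It then remains only to check that $\gaN u=0$, which follows from the density of $C^\infty_0(\Omega)$ in $H^2_0(\Omega)$ with respect to the $H^2$-norm combined with boundedness of $\gaN\colon H^2(\Omega)\cap H^1_0(\Omega)\to N^{1/2}(\partial\Omega)$ recorded in the appendix.

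With the identity in hand, both operator inclusions are immediate. Reading it in one direction, every $u\in\dom(-\Delta_{\min})$ induces the bounded functional $v\mapsto(-\Delta u,v)_{L^2(\Omega)}$ on $\dom(-\Delta_{\max})$; hence $u\in\dom((-\Delta_{\max})^*)$ with $(-\Delta_{\max})^*u=-\Delta u$, i.e.\ $-\Delta_{\min}\subseteq(-\Delta_{\max})^*$. Reading it the other way, every $v\in\dom(-\Delta_{\max})$ lies in $\dom((-\Delta_{\min})^*)$ with $(-\Delta_{\min})^*v=-\Delta v$, giving $-\Delta_{\max}\subseteq(-\Delta_{\min})^*$. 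Symmetry of $-\Delta_{\min}$ is then an instantaneous corollary of the chain $-\Delta_{\min}\subseteq-\Delta_{\max}\subseteq(-\Delta_{\min})^*$.

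There is essentially no obstacle in this plan: the delicate analytic content is already packaged into the extended trace machinery recalled in the appendix. The only step that warrants a little care is the verification $\gaN u=0$ for $u\in H^2_0(\Omega)$; after that, everything reduces to bookkeeping, namely choosing the correct arguments in the Green formula and then recognizing that symmetry is an automatic consequence of the two adjoint inclusions.
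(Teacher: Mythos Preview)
Your argument is correct, but note that the paper itself does not prove this statement: it is quoted in Appendix~\ref{appA} directly from \cite[Corollary~7.2]{GM10} with no accompanying proof, so there is no ``paper's own proof'' against which to compare. Your route via the extended Green identity of Lemma~\ref{wd}, together with the fact (recorded just above in the appendix) that $H^2_0(\Omega)$ is precisely the null space of $\gaN$ on $H^2(\Omega)\cap H^1_0(\Omega)$, is a clean and self-contained justification.
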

\begin{hypothesis}\lb{b1111}
	Let $n\in\bbN,\ n\geq 2$, and assume that $\Omega\subset\R^n$ is a  quasi-convex domain (see \cite[Definition 8.9]{GM10}).
\end{hypothesis} 
\begin{theorem}[\cite{GM10}, Theorem 8.14]\lb{m=m*}
	Assume Hypothesis \ref{b1111}. Then
	\begin{align*}
	\dom(-\Delta_{\min})&=H^2_0(\Om)=\{u\in L^{2}(\Om)|\  \Delta u\in L^{2}(\Om) ,\ \gd(u)=0,\ \gn(u)=0\},\\
	-\Delta_{\min}&=(-\Delta_{\max})^*,\,\,-\Delta_{\max}=(-\Delta_{\min})^*.
	\end{align*}
\end{theorem}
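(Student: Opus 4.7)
The theorem consists of two claims: first, the identification
\[
\dom(-\Delta_{\min}) \;=\; H^{2}_{0}(\Omega) \;=\; \{\,u\in L^{2}(\Omega)\;:\;\Delta u\in L^{2}(\Omega),\ \gd u = 0,\ \gn u = 0\,\},
\]
and second, the dual identities $-\Delta_{\min}=(-\Delta_{\max})^{*}$ and $-\Delta_{\max}=(-\Delta_{\min})^{*}$. My plan is to reduce everything to the first identification and then exploit results already listed in Section~A.

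My first step is the easy inclusion. For $u\in H^{2}_{0}(\Omega)$ pick $\varphi_{n}\in C^{\infty}_{0}(\Omega)$ with $\varphi_{n}\to u$ in $H^{2}(\Omega)$; then $\gd u = \gn u = 0$ by the continuity of the extended traces (Lemmas in the Appendix) together with their compatibility with the strong traces, and $\Delta u\in L^{2}(\Omega)$ trivially. So $H^{2}_{0}(\Omega)$ sits inside the right-hand set.

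The core of the proof is the reverse inclusion, and this is where Hypothesis~\ref{b1111} (quasi-convexity) is used. Given $u$ with $\Delta u\in L^{2}(\Omega)$ and $\gd u = \gn u = 0$, I introduce the auxiliary solution $v$ of the Dirichlet problem
\[
-\Delta v = -\Delta u \text{ in }\Omega,\qquad \gamma_{D}v = 0 \text{ on }\partial\Omega.
\]
The defining analytical feature of quasi-convex domains in \cite{GM10} is precisely that the Dirichlet Laplacian improves regularity by two derivatives, so $v\in H^{2}(\Omega)\cap H^{1}_{0}(\Omega)$. Setting $w := u-v$, one has $w\in L^{2}(\Omega)$, $\Delta w = 0$ (as a distribution), and $\gd w = \gd u - \gd v = 0$ (using $\gd v = \gamma_{D}v = 0$ via \eqref{comp}). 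Since $0\notin\spec(-\Delta_{D,\Omega})$, the weak Dirichlet problem for the Laplacian has only the trivial solution with $L^{2}$ boundary datum, so $w=0$ and therefore $u=v\in H^{2}(\Omega)$. At this point $\gd u = \gamma_{D}u$ and $\gn u = \gamma_{N}u$ by \eqref{comp} and its Neumann analogue, so $u\in H^{2}(\Omega)$ has vanishing classical Dirichlet and Neumann traces; by the lemma characterizing $\ker(\gamma_{N})\cap\ker(\gamma_{D})$ on $H^{2}(\Omega)$ as $H^{2}_{0}(\Omega)$, we conclude $u\in H^{2}_{0}(\Omega)$. This together with the fact from the theorem preceding Hypothesis \ref{b1111} that $\dom((-\Delta_{\max})^{*}) = \{u\in L^{2}(\Omega) : \Delta u\in L^{2}(\Omega),\ \gd u=0,\ \gn u=0\}$ immediately gives $\dom(-\Delta_{\min}) = H^{2}_{0}(\Omega) = \dom((-\Delta_{\max})^{*})$, and since both operators act by $-\Delta$ on this common domain, the identity $-\Delta_{\min} = (-\Delta_{\max})^{*}$ follows.

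Finally, for $-\Delta_{\max}=(-\Delta_{\min})^{*}$, one inclusion is already recorded in \eqref{mm}. For the opposite, take $u\in\dom((-\Delta_{\min})^{*})$, so that $(u,-\Delta\varphi)_{L^{2}(\Omega)} = ((-\Delta_{\min})^{*}u,\varphi)_{L^{2}(\Omega)}$ for every $\varphi\in H^{2}_{0}(\Omega)$. Restricting to $\varphi\in C^{\infty}_{0}(\Omega)\subset H^{2}_{0}(\Omega)$ yields $-\Delta u = (-\Delta_{\min})^{*}u\in L^{2}(\Omega)$ in the distributional sense, whence $u\in\dom(-\Delta_{\max})$ with $-\Delta_{\max}u = (-\Delta_{\min})^{*}u$. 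I expect the only genuine obstacle to be the regularity step $v\in H^{2}(\Omega)$: this is the deep content of quasi-convexity proved in \cite{GM10} and must simply be imported. Everything else is bookkeeping with the extended traces and the closed-operator identity $A^{**}=\overline{A}$.
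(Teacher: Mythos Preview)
The paper does not supply its own proof of Theorem~\ref{m=m*}; the result is quoted verbatim from \cite[Theorem 8.14]{GM10} and used as a black box. So there is nothing to compare against at the level of argument.

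That said, your reconstruction is sound and captures exactly the mechanism behind the result in \cite{GM10}: the single nontrivial input is the $H^{2}$ regularity of the Dirichlet Laplacian on quasi-convex domains (i.e., $\dom(-\Delta_{D,\Omega})\subset H^{2}(\Omega)\cap H^{1}_{0}(\Omega)$), and once that is imported, the identification $H^{2}_{0}(\Omega)=\{u\in L^{2}(\Omega):\Delta u\in L^{2}(\Omega),\ \gd u=0,\ \gn u=0\}$ follows by the subtraction argument you give, combined with the uniqueness statement for the generalized Dirichlet problem (cf.\ \cite[Theorem 10.4]{GM10}, already cited in the proof of Proposition~\ref{mainb}) and the description of $\ker(\gamma_{N})$ on $H^{2}(\Omega)\cap H^{1}_{0}(\Omega)$ as $H^{2}_{0}(\Omega)$. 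Two small remarks: (a) note that $\dom(-\Delta_{\min})=H^{2}_{0}(\Omega)$ is the \emph{definition} given in the Appendix, so the first equality needs no argument; the content lies entirely in the second equality. (b) Your direct verification of $(-\Delta_{\min})^{*}\subset -\Delta_{\max}$ is fine, but once $-\Delta_{\min}=(-\Delta_{\max})^{*}$ is in hand you can simply take adjoints, using that $-\Delta_{\max}$ is closed, to obtain $(-\Delta_{\min})^{*}=(-\Delta_{\max})^{**}=-\Delta_{\max}$.
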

A nice additional feature of the quasi-convex domains is that the trace operators are onto, see \cite[Theorem 10.2, 10.6]{GM10}.

Assuming Hypothesis \ref{b1}, we introduce the Dirichet-to-Neumann map $M_{D,N}$ associated with $-\Delta$ on $\Om$ as follows
\begin{equation}
M_{D,N}:
(N^{1/2}(\dOm))^*\to(N^{3/2}(\dOm))^*:
f\mapsto-\gn(u_D),  
\end{equation}
where $u_D$ is the unique solution of boundary value problem
\begin{equation}
-\Delta u=0\,\,\hbox{in}\,\,\Om,\,\,\,\,u\in L^{2}(\Om),\,\,\gd u=f\,\,\hbox{in}\,\,\dOm.
\end{equation}

Following \cite[Section 12]{GM10}, we consider the regularized Neumann trace operator on quasi-convex domains.
\begin{theorem}[\cite{GM10}, Theorem 12.1]\lb{nn}
	Assume Hypothesis \ref{b1}. Then the map
	\begin{align}
	\tN:\{u\in L^2(\Omega)| \Delta u\in L^2(\Omega)\}\rightarrow \noh,\,
	\tN u:=\gn u+ M_{D,N}(\gd u),
	\end{align}
	is well-defined, linear, and bounded when the space 
	$$\{u\in L^2(\Omega)| \Delta u\in L^2(\Omega)\}=\dom(-\Delta_{\max})$$ is equipped with the natural graph norm 
	$u\mapsto \|u\|_{L^{2}(\Om)}+\|\Delta u\|_{L^{2}(\Om)}$. Moreover, this operator is onto. In fact,
	\begin{equation}\lb{onto}
	\tN(H^2(\Om)\cap H^1_0(\Om))=\noh.
	\end{equation}
	Also, the null space of the map $\tN$ is given by
	\begin{equation}\lb{kernel}
	\ker(\tN)=H_0^2(\Omega)\dot{+}\{u\in L^2(\Om),\,\, -\Delta u=0 \}.
	\end{equation}
	Finally, the following Green formula holds for every $u,v\in\dom(-\Delta_{\max})$,
	\begin{align}\lb{Green}
	&(-\Delta u,v)_{L^{2}(\Om)}-(u,-\Delta v)_{L^{2}(\Om)}\no\\
	&=-\lnoh \tN u, \gd v \rnohs+{\lnoh \tN v, \gd u \rnohs}.
	\end{align}
	
\end{theorem}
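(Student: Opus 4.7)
The plan is to exploit the standard decomposition of $u\in\dom(-\Delta_{\max})$ into its inhomogeneous and harmonic parts, thereby reducing the claimed properties of $\tN$ to known results about the strong Neumann trace on $H^2(\Omega)\cap H^1_0(\Omega)$ and the Dirichlet-to-Neumann operator $M_{D,N}$. Given $u\in L^2(\Omega)$ with $\Delta u\in L^2(\Omega)$, I would let $u_1\in H^2(\Omega)\cap H^1_0(\Omega)$ solve $-\Delta u_1=-\Delta u$ with $\gaD u_1=0$ (well-posed on a quasi-convex domain since $0\notin\spec(-\Delta_{D,\Omega})$), and set $u_2:=u-u_1\in L^2(\Omega)$. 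Then $u_2$ is harmonic in the distributional sense, $\gd u_2=\gd u$, and $\gn u=\gamma_N u_1+\gn u_2$, since the weak Neumann trace reduces to the strong one on $H^2$. By the definition of $M_{D,N}$, one has $\gn u_2=-M_{D,N}(\gd u_2)=-M_{D,N}(\gd u)$, whence
\begin{equation*}
\tN u=\gn u+M_{D,N}(\gd u)=\gamma_N u_1\in N^{1/2}(\partial\Omega).
\end{equation*}
This single identity is the engine of the whole argument.

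From it, well-definedness and the mapping into $N^{1/2}(\partial\Omega)$ are immediate consequences of the lemma on $\gamma_N\colon H^2(\Omega)\cap H^1_0(\Omega)\to N^{1/2}(\partial\Omega)$ recalled earlier. Boundedness is inherited from that of the solution map $u\mapsto u_1$ from $\dom(-\Delta_{\max})$ (with its graph norm) into $H^2\cap H^1_0$, available on quasi-convex domains by elliptic regularity, composed with the bounded $\gamma_N$. For surjectivity, and in particular \eqref{onto}, given $g\in N^{1/2}(\partial\Omega)$ I would apply the bounded right inverse of $\gamma_N$ to obtain $w\in H^2\cap H^1_0$ with $\gamma_N w=g$; since $\gd w=0$, we get $\tN w=g$, so the image already exhausts $N^{1/2}(\partial\Omega)$ on this smaller domain.

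The kernel description \eqref{kernel} follows similarly: $\tN u=0$ forces $\gamma_N u_1=0$, hence $u_1\in H^2_0(\Omega)$ by the kernel statement for $\gamma_N$, so $u\in H^2_0(\Omega)+\{v\in L^2(\Omega):\Delta v=0\}$; the sum is direct because any $H^2_0$ harmonic function solves the homogeneous Dirichlet problem and must vanish. For the Green identity \eqref{Green}, I would decompose both $u=u_1+u_2$ and $v=v_1+v_2$, apply the classical Green formula on the $H^2$ pieces, use the integration-by-parts formula from Lemma \ref{wd} to convert the pairings involving the harmonic components into boundary pairings, and then exploit the symmetry of $M_{D,N}$ to cancel the harmonic-harmonic cross terms before reassembling the result in terms of $\tN$ and $\gd$.

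The main obstacle, in my estimation, is the bookkeeping in this last step: one has to track the dual pairings $\lnoh\cdot,\cdot\rnohs$ and $\lnth\cdot,\cdot\rnths$ carefully through the decomposition, verify that the weak extensions $\gn$ and $\gd$ interact correctly with $M_{D,N}$ (which in practice requires passing to smooth approximations where the classical Green formula applies directly), and confirm that $M_{D,N}$ is symmetric with respect to the appropriate pairing so that the cross terms indeed cancel cleanly. Once that piece is in place, the remaining assertions reduce to direct invocations of the already-established boundedness, surjectivity, and kernel lemmas for the trace maps on $H^2$-subspaces.
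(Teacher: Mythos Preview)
The paper does not supply its own proof of this theorem; it is quoted from \cite[Theorem 12.1]{GM10} and stated without proof in the appendix. Your sketch is correct and is essentially the argument given in the cited source: the decomposition $u=u_1+u_2$ with $u_1\in H^2(\Omega)\cap H^1_0(\Omega)$ solving the inhomogeneous Dirichlet problem and $u_2$ harmonic, together with the key identity $\tN u=\gamma_N u_1$, reduces well-definedness, boundedness, surjectivity, and the kernel description to the already-recorded properties of $\gamma_N$ on $H^2\cap H^1_0$ (Lemma~A.2 in the appendix). Your assessment of the Green formula as the most delicate step is accurate, and the mechanism you outline---symmetry of $M_{D,N}$ to kill the harmonic-harmonic cross terms, plus the integration-by-parts formula from Lemma~\ref{wd} for the mixed terms---is exactly what is needed.
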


The following result can be found, for example, in \cite[Proposition 2.5]{BB12}.
\begin{theorem}\lb{bucp}
Let $\Omega$ be an open bounded star-shaped Lipschitz domain in $\R^n$ and let $L=-\Delta+V$ with a potential $V\in L^{\infty} (\overline{\Omega})$. If $u\in H^1(\Omega)$ is a weak solution of the Schr\"{o}dinger equation $Lu=0$ that satisfies $\gamma_Du=0$ and $\gamma_Nu=0$ then $u=0$.
\end{theorem}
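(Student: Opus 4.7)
The approach is the classical one: extend $u$ by zero across $\partial\Omega$ to convert the boundary Cauchy problem into an interior unique-continuation statement on $\R^n$. First I would use $\gamma_D u = 0$ to identify $u\in H^1_0(\Omega)$ and define the zero extension $\tilde u\in H^1(\R^n)$; similarly I would extend $V$ by zero outside $\Omega$ to obtain $\tilde V \in L^\infty(\R^n)$ of compact support. Because $V \in L^\infty$, the equation $Lu = 0$ gives $\Delta u = Vu \in L^2(\Omega)$, so $u$ lies in the natural domain of the weak Neumann trace and the hypothesis $\gamma_N u = 0$ is meaningful in the duality sense.

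The next step is to verify that $\tilde u$ satisfies $(-\Delta + \tilde V)\tilde u = 0$ distributionally on all of $\R^n$. For any test function $\phi \in C_c^\infty(\R^n)$, the integral $\int_{\R^n} \nabla \tilde u \cdot \nabla \phi\, dx$ reduces to $\int_\Omega \nabla u \cdot \nabla \phi\, dx$ since $\tilde u$ vanishes outside $\Omega$. The weak Green's identity for $u$ (valid because $\Delta u \in L^2(\Omega)$) rewrites this as $-\int_\Omega \Delta u \cdot \phi\, dx + \langle \gamma_N u,\, \gamma_D \phi \rangle$, and the boundary pairing vanishes by the assumption $\gamma_N u = 0$. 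Substituting $\Delta u = Vu$ then yields the desired distributional identity on all of $\R^n$.

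The proof is closed by invoking the weak unique continuation property for Schr\"odinger operators with bounded potentials: since $\tilde u \in H^1(\R^n)$ solves $(-\Delta + \tilde V)\tilde u = 0$ on the connected set $\R^n$ and vanishes identically on the non-empty open set $\R^n \setminus \overline{\Omega}$ (non-empty because $\Omega$ is bounded), the classical theorem of Aronszajn, Krzywicki, and Szarski (or a Carleman-estimate argument in the style of J\'erison and Kenig for $V \in L^\infty$) forces $\tilde u \equiv 0$ in $\R^n$, and hence $u \equiv 0$ in $\Omega$. The only non-trivial ingredient is this appeal to unique continuation; everything else is a routine verification using the trace machinery recalled in Appendix A. I would also remark that the star-shaped hypothesis does not actually enter this argument: only the Lipschitz regularity of $\partial\Omega$ is used, and only implicitly, through the well-definedness of the weak Neumann trace and the duality pairing in Green's identity.
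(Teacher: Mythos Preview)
Your argument is correct and is the standard route to this result. Note, however, that the paper does not actually supply a proof of this theorem: it is stated in the appendix with the attribution ``The following result can be found, for example, in \cite[Proposition 2.5]{BB12}'' and left at that. So there is no in-paper proof to compare against; your extension-by-zero plus unique-continuation argument is precisely the kind of proof one finds in the cited reference, and your observation that the star-shaped hypothesis is not needed for this particular statement is also accurate.
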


\begin{lemma}[\cite{CJLS}, Lemma 3.8]\lb{crossex}
	Let $\{P_s\}_{s\in\Sigma}$ be a family of orthogonal projections on a Hilbert space $\cH$ such that the function $s\mapsto P_s$ is in $C^1(\Sigma;\cB(\cH))$ for some $\Sigma=[a,b]\subseteq\R$. Then for any $s_0\in\Sigma$ there exists a neighborhood $\Sigma_0$ in $\Sigma$ containing $s_0$ and a family of operators $\{B_s\}$ from $\ran(P_{s_0})$ into $\ker(P_{s_0})$ such that 
	the function $s\mapsto B_s$ is in $C^1(\Sigma_0;\cB(\ran(P_{s_0}),\ker(P_{s_0})))$ and for all $s\in\Sigma_0$, using the decomposition 
	$\cH=\ran(P_{s_0})\oplus\ker(P_{s_0})$, we have
	\begin{equation}\lb{greq}
	\ran(P_s)=Graph(B_s)=\{q+B_sq: q\in\ran(P_{s_0})\}.
	\end{equation}
	Moreover,
	$B_s\to0$ in $\cB(\ran(P_{s_0}),\ker(P_{s_0}))$ as $s\to s_0$.
\end{lemma}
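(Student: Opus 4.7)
The plan is to construct $B_s$ by means of the classical Kato similarity transformation, which provides a smooth family of isomorphisms intertwining $P_{s_0}$ with $P_s$. First I would note that continuity of $s\mapsto P_s$ yields a neighborhood $\Sigma_0\subset\Sigma$ of $s_0$ on which $\|P_s-P_{s_0}\|_{\cB(\cH)}<1$; on this $\Sigma_0$ define
\[
U_s := \bigl(I_{\cH}-(P_s-P_{s_0})^2\bigr)^{-1/2}\bigl[P_sP_{s_0}+(I_{\cH}-P_s)(I_{\cH}-P_{s_0})\bigr].
\]
The holomorphic functional calculus, applied to the self-adjoint operator $P_s-P_{s_0}$ whose spectrum lies in $(-1,1)$, combined with the $C^1$ dependence of $P_s$ on $s$, gives $U_s\in C^1(\Sigma_0,\cB(\cH))$. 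A standard computation using $P_s^2=P_s$ and $P_{s_0}^2=P_{s_0}$ yields the intertwining relation $U_sP_{s_0}=P_sU_s$ and the invertibility of $U_s$, whence $U_s$ restricts to bijections $\ran(P_{s_0})\to\ran(P_s)$ and $\ker(P_{s_0})\to\ker(P_s)$.

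Next I would extract $B_s$ from $U_s$. Introduce the auxiliary family
\[
V_s := P_{s_0}\,U_s\bigr|_{\ran(P_{s_0})}\colon\ran(P_{s_0})\to\ran(P_{s_0}),
\]
so that $V_{s_0}=I_{\ran(P_{s_0})}$; after shrinking $\Sigma_0$ if necessary, $V_s$ is boundedly invertible with $C^1$ dependence on $s$. Set
\[
B_s := (I_{\cH}-P_{s_0})\,U_s\,V_s^{-1}\colon \ran(P_{s_0})\to\ker(P_{s_0}),
\]
which then lies in $C^1\bigl(\Sigma_0,\cB(\ran(P_{s_0}),\ker(P_{s_0}))\bigr)$. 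For any $q\in\ran(P_{s_0})$, decomposing $U_sV_s^{-1}q$ according to $\cH=\ran(P_{s_0})\oplus\ker(P_{s_0})$ gives
\[
U_sV_s^{-1}q \;=\; P_{s_0}U_sV_s^{-1}q+(I_{\cH}-P_{s_0})U_sV_s^{-1}q \;=\; V_sV_s^{-1}q+B_sq \;=\; q+B_sq,
\]
and this vector lies in $\ran(P_s)$ since $U_sV_s^{-1}q\in U_s(\ran(P_{s_0}))=\ran(P_s)$. Thus $\{q+B_sq:q\in\ran(P_{s_0})\}\subset\ran(P_s)$. Conversely, since $U_s$ maps $\ran(P_{s_0})$ bijectively onto $\ran(P_s)$, any $u\in\ran(P_s)$ equals $U_su'$ for a unique $u'\in\ran(P_{s_0})$; setting $q:=P_{s_0}u=V_s u'$ gives $u'=V_s^{-1}q$ and $u=q+B_sq$, establishing the graph representation \eqref{greq}.

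Finally, $U_{s_0}=I_{\cH}$ forces $V_{s_0}=I_{\ran(P_{s_0})}$, so $B_{s_0}q=(I_{\cH}-P_{s_0})q=0$ for $q\in\ran(P_{s_0})$, and the convergence $B_s\to 0$ as $s\to s_0$ in $\cB(\ran(P_{s_0}),\ker(P_{s_0}))$ then follows from the $C^1$ (in particular, continuous) dependence of $s\mapsto B_s$. The main technical point is the verification that the Kato transformation $U_s$ is genuinely $C^1$ in $s$: this requires the real-analyticity of $X\mapsto(I_{\cH}-X^2)^{-1/2}$ on a neighborhood of $0\in\cB(\cH)$ via the holomorphic functional calculus, composed with the given $C^1$ family $s\mapsto P_s-P_{s_0}$. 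Once that ingredient is in place, the remaining identities are routine operator algebra.
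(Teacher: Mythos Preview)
The paper does not prove this lemma; it is merely quoted from \cite[Lemma~3.8]{CJLS} and stated in the appendix without argument. Your proof via the Kato similarity transformation is correct and is in fact the standard route---indeed, the paper's own Remark~\ref{rem2.4} invokes exactly this construction (citing \cite[Section~IV.1]{DK74} and \cite[Remark~6.11]{F04}) when it needs a smooth family of invertible operators $U_t$ satisfying $U_tP_{t_0}=P_tU_t$. The technical points you flag (real-analyticity of $X\mapsto(I_{\cH}-X^2)^{-1/2}$ near $0$ via the holomorphic functional calculus, invertibility of $V_s$ for $s$ near $s_0$ by continuity and $V_{s_0}=I_{\ran(P_{s_0})}$, and the graph identification via the decomposition $U_sV_s^{-1}q=q+B_sq$) are all handled correctly.
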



\end{document}